\newtheorem{thm}{Theorem}[section]
\newtheorem{lem}[thm]{Lemma}
\newtheorem{prop}[thm]{Proposition}
\theoremstyle{remark}
\newcounter{remarkscounter}
\numberwithin{equation}{section}
\newcommand{\A}{\mathbb{A}}
\newcommand{\GL}{\mathrm{GL}}
\newcommand{\gl}{\mathfrak{gl}}
\newcommand{\SL}{\mathrm{SL}}
\newcommand{\ZZ}{\mathbb{Z}}
\newcommand{\QQ}{\mathbb{Q}}
\newcommand{\lto}{\longrightarrow}
\newcommand{\OO}{\mathcal{O}}
\newcommand{\CC}{\mathbb{C}}
\newcommand{\RR}{\mathbb{R}}
\newcommand{\GG}{\mathbb{G}}
\newcommand{\quash}[1]{}
\theoremstyle{definition}
\newtheorem{defn}[thm]{Definition}
\renewcommand{\bar}{\overline}
\numberwithin{equation}{subsection}
\newcommand{\one}{\mathbbm{1}}
\begin{document}

\title{A refined Poisson summation formula for certain Braverman-Kazhdan spaces}

\author{Jayce R. Getz}
\address{Department of Mathematics\\
Duke University\\
Durham, NC 27708}
\email{jgetz@math.duke.edu}

\author{Baiying Liu}
\address{Department of Mathematics\\
Purdue University\\
West Lafayette, IN 47907}
\email{liu2053@purdue.edu}

\subjclass[2010]{Primary 11F70, Secondary 11F66}

\thanks{The first named author is thankful for partial support provided by NSF grant DMS 1405708. The second named author is partially supported by NSF grant DMS 1702218 and by a start-up fund from the Department of Mathematics at Purdue University.
Any opinions, findings, and conclusions or recommendations expressed in this material are those of the authors and do not necessarily reflect the views of the National Science Foundation.}

\begin{abstract}
Braverman and Kazhdan \cite{BK-lifting} introduced influential conjectures generalizing the Fourier transform and the Poisson summation formula.   Their conjectures should imply that quite general Langlands $L$-functions have meromorphic continuations and functional equations as predicted by Langlands' functoriality conjecture.  As evidence for their conjectures, Braverman and Kazhdan considered a setting related to the so-called doubling method in a later paper \cite{BK:normalized} and proved the corresponding Poisson summation formula under restrictive assumptions on the functions involved.  The connection between the two papers is made explicit in \cite{WWLi:Zeta}.  In this paper we consider a special case of the setting of \cite{BK:normalized}, and prove a refined Poisson summation formula that eliminates the restrictive assumptions of loc.~cit.  Along the way we provide analytic control on the Schwartz space we construct; this analytic control was conjectured to hold (in a slightly different setting) in \cite{BK:normalized}.
\end{abstract}

\maketitle

\tableofcontents

\section{Introduction}

Let $F$ be a number field and $\A_F$ its ring of adeles.  Let $f \in \mathcal{S}(\gl_n(\A_F))$ be a Schwartz function.  Then the Poisson summation formula on $\gl_n(F)$ asserts that
$$
\sum_{\gamma \in \gl_n(F)} f(\gamma)=\sum_{\gamma \in \gl_n(F)} \widehat{f}(\gamma)\,,
$$
where $\widehat{f}$ is the Fourier transform of $f$.  Following Tate, who considered the case $n=1$ in his thesis, Godement and Jacquet \cite{GodementJacquetBook} used this formula to prove that the standard $L$-function of a cuspidal automorphic representation of $\GL_n(\A_F)$ has a holomorphic continuation to the plane and a functional equation.  

Braverman and Kazhdan \cite{BK-lifting} have suggested that this is but the first case of a general phenomenon. They conjecture that for every split reductive group $G$ and representation $\rho:{}^LG^{\circ} \lto \GL(V_\rho)$
of the neutral component of the $L$-group there is a corresponding nonabelian Poisson summation formula.  The summation formula should imply the functional equation and meromorphic continuation of the Langlands $L$-functions $L(s,\pi,\rho)$ for $\pi$ a cuspidal automorphic representation of $G(\A_F)$.  The replacement for $\gl_n(F)$ is a certain reductive    
 monoid attached to $\rho$ using results of Vinberg \cite{NgoSums} that can be viewed as a sort of compactification of $G$.  These are referred to as ``nonabelian'' Poisson summation formulae because in general there is no additive structure on the reductive monoid, only the multiplicative 
structure extending the group multiplication on $G$.

\subsection{The Poisson summation formula for Braverman-Kazhdan spaces}

Let $G$ be a split reductive group with simply connected derived group over a number field $F$ and let $P \leq G$ be a proper parabolic subgroup.  Let 
$$
X:=[P,P] \backslash G\,,
$$
where $[H,H]$ denotes the derived group of an algebraic group $H$.
Braverman and Kazhdan \cite{BK:normalized} defined a space of Schwartz functions on $X(\A_F)$ and 
sketched a proof of a Poisson summation formula for this space of functions.  At least in certain cases, the spaces $X$ can be related to reductive monoids attached to the standard representations of (the $L$-groups of) classical groups, and the Poisson summation formula of Braverman and Kazhdan provides a different perspective from which one can view the famous doubling method introduced by  Rallis and Piatetski-Shapiro \cite{GPSR:LNM}.  Thus Braverman and Kazhdan were able to confirm their conjectures on nonabelian Poisson summation formulae in this case.

The argument ultimately boils down to an application of the theory of Eisenstein series, just as in the doubling method. Braverman and Kazhdan's real achievement was finding a geometric way to interpret and normalize the intertwining operators that are used to study the meromorphic continuation of these Eisenstein series.  In honor of their work we refer to $X$ as a Braverman-Kazhdan space.  

Unfortunately, the description of the Schwartz space given by Braverman and Kazhdan makes the growth properties of functions in the space unclear.     Moreover, they imposed conditions to eliminate boundary terms in the Poisson summation formula that are vital in applications.  For example, if one were using their formula to reprove the analytic continuation and functional equation of triple product $L$-functions then one would not be able to say anything about the residues of these $L$-functions (see  \cite{PSRallisTriple,Ikeda:poles:triple}).  

In this paper we explicate and refine Braverman and Kazhdan's work in a special case.  
Let 
$$
J:=\left(\begin{smallmatrix} & I_n \\ -I_n &  \end{smallmatrix} \right),
$$
and for $\ZZ$-algebras $R$ let
$$
\mathrm{Sp}_{2n}(R):=\left\{g \in \GL_{2n}(R): gJ^{-1}g^tJ=I_{2n}\right\}.
$$
 We typically regard $\mathrm{Sp}_{2n}$ as a reductive group over $F$ or one of its completions.  For $F$-algebras $R$ let 
\begin{align} \label{Siegel}
P(R):=\{\left(\begin{smallmatrix} A &  \\ &A^{-t} \end{smallmatrix}\right)\left(\begin{smallmatrix} I_n &  Z \\ &I_n \end{smallmatrix}\right): A \in \GL_n(R): Z=Z^t\}\,.
\end{align}
We let $M \leq P$ be the Levi subgroup of block diagonal matrices and let $N \leq P$ be the unipotent radical.

Let $K \leq \mathrm{Sp}_{2n}(\A_F)$ be a maximal compact subgroup such that $K^\infty$ is $\mathrm{Sp}_{2n}(\A_F^\infty)$-conjugate to $\mathrm{Sp}_{2n}(\widehat{\OO})$.  Here $\OO$ is the ring of integers of $F$.  We define a Schwartz space $\mathcal{S}(X(\A_F),K)$ and construct a Fourier transform
$$
\mathcal{F}:=\mathcal{F}_{\psi,K}:\mathcal{S}(X(\A_F),K) \lto \mathcal{S}(X(\A_F),K)
$$
depending on a (nontrivial) additive character $\psi:F \backslash \A_F \to \CC^\times$ (see Theorem \ref{thm:FT}).  Here the $K$ indicates that the functions in the space are $K$-finite.  We also develop the analytic properties of elements in the Schwartz space, including growth estimates (see \S \ref{sec:Schwartz}).

We then obtain a Poisson summation formula:
\begin{thm} \label{thm:intro}
Let $\Phi \in \mathcal{S}(X(\A_F),K)$.  One has that
\begin{align*}
&\sum_{\gamma \in X(F)}\Phi(\gamma)+
\frac{1}{\kappa_F}\sum_{
\substack{
0 \leq m <\frac{n+1}{2} \\m \in \mathbb{Z}}}\mathrm{Res}_{s= \frac{n+1}{2}-m }E(\mathcal{F}(\Phi)_{1_s})+\frac{1}{\kappa_F}\sum_{\substack{\chi \in \widehat{[\GG_m]} \\\chi \neq 1, \chi^2=1}}
\sum_{\substack{
0 \leq m <\frac{n-1}{2} \\m \in \mathbb{Z}}}
\mathrm{Res}_{s=\frac{n-1}{2}-m }E(\mathcal{F}(\Phi)_{\chi_s})
\\
&=\sum_{\gamma \in X(F)} \mathcal{F}(\Phi)(\gamma)+
\frac{1}{\kappa_F}\sum_{
\substack{
0 \leq m <\frac{n+1}{2} \\m \in \mathbb{Z}}}\mathrm{Res}_{s= \frac{n+1}{2}-m }E(\Phi_{1_s})+\frac{1}{\kappa_F}\sum_{\substack{\chi \in \widehat{[\GG_m]} \\\chi \neq 1, \chi^2=1}}
\sum_{\substack{
0 \leq m <\frac{n-1}{2} \\m \in \mathbb{Z}}}
\mathrm{Res}_{s=\frac{n-1}{2}-m }E(\Phi_{\chi_s})\,.
\end{align*}
All of the sums here are absolutely convergent. 
\end{thm}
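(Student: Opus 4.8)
The plan is to write $\sum_{\gamma \in X(F)}\Phi(\gamma)$ as a vertical contour integral of a family of degenerate Siegel Eisenstein series on $\mathrm{Sp}_{2n}$, to use the functional equation of these Eisenstein series---realized, in the normalization of \cite{BK:normalized}, by the Fourier transform $\mathcal{F}$---to rewrite the same quantity as a contour integral far to the left, and then to move the contour back across all of the poles; the residues picked up to the right of the imaginary axis produce the $\mathcal{F}(\Phi)$-terms in the theorem, and those to the left, after one more use of the functional equation, produce the $\Phi$-terms. To set things up, note that $[P,P]=[M,M]N$ is an extension of the special group $\SL_n$ by a vector group, so $H^1(F,[P,P])$ is trivial, $X(F)=[P,P](F)\backslash G(F)$, and the morphism $X \to P\backslash G$ exhibits $X$ as the total space of a $\GG_m$-torsor (via $P/[P,P]\cong\GG_m$) compatible with the right $G$-action. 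For $\Phi\in\mathcal{S}(X(\A_F),K)$, a character $\chi\in\widehat{[\GG_m]}$, and $s\in\CC$, form the Mellin component $\Phi_{\chi_s}$, a section of the degenerate principal series $I(\chi,s)$ of $\mathrm{Sp}_{2n}(\A_F)$ induced from $P$, and write $E(\Phi_{\chi_s})$ for the value at the identity coset of the associated degenerate Eisenstein series, so that $E(\Phi_{\chi_s})=\sum_{\delta\in P(F)\backslash G(F)}\Phi_{\chi_s}(\delta)$ in the range of absolute convergence $\mathrm{Re}(s)>\frac{n+1}{2}$. Using the rapid decay of elements of $\mathcal{S}(X(\A_F),K)$ from \S\ref{sec:Schwartz} together with the convergence of Siegel Eisenstein series, and applying Mellin inversion on $[\GG_m]$ termwise in the $\delta$-sum, one obtains, for $\sigma>\frac{n+1}{2}$,
\[
\sum_{\gamma\in X(F)}\Phi(\gamma)=\frac{1}{\kappa_F}\sum_{\chi\in\widehat{[\GG_m]}}\frac{1}{2\pi i}\int_{\mathrm{Re}(s)=\sigma}E(\Phi_{\chi_s})\,ds,
\]
and similarly for $\mathcal{F}(\Phi)$; the $\chi$-sum is finite because $\Phi$ is $K$-finite, and the convergence of the $\gamma$-sum and the interchange of sum and integral are supplied by the same estimates.

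Next, by the construction of $\mathcal{F}=\mathcal{F}_{\psi,K}$ in Theorem \ref{thm:FT}---which realizes the normalized intertwining operators geometrically---together with the functional equation of the degenerate Eisenstein series, one has, as an identity of meromorphic functions of $s$,
\[
E(\mathcal{F}(\Phi)_{\chi_s})=E(\Phi_{\chi^{-1}_{-s}}).
\]
Substituting $s\mapsto -s$ and $\chi\mapsto\chi^{-1}$ in this identity and in the previous display then gives, for $\sigma>\frac{n+1}{2}$,
\[
\sum_{\gamma\in X(F)}\Phi(\gamma)=\frac{1}{\kappa_F}\sum_{\chi\in\widehat{[\GG_m]}}\frac{1}{2\pi i}\int_{\mathrm{Re}(s)=-\sigma}E(\mathcal{F}(\Phi)_{\chi_s})\,ds.
\]

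The third ingredient is a precise description of the analytic behaviour of $s\mapsto E(\Phi_{\chi_s})$ (equivalently of $E(\mathcal{F}(\Phi)_{\chi_s})$): it is meromorphic with at most simple poles, bounded in vertical strips away from them, and in the region $\mathrm{Re}(s)\neq 0$ its poles occur exactly at $s=\pm(\frac{n+1}{2}-m)$, $m\in\ZZ_{\geq 0}$, when $\chi=1$, and at $s=\pm(\frac{n-1}{2}-m)$, $m\in\ZZ_{\geq 0}$, when $\chi^2=1$ and $\chi\neq 1$, with no poles off the imaginary axis for the remaining $\chi$; in particular none lies on the line $\mathrm{Re}(s)=0$. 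Granting this, move the contour in the formula for $\sum_{\gamma}\mathcal{F}(\Phi)(\gamma)$ from $\mathrm{Re}(s)=\sigma$ down to $\mathrm{Re}(s)=-\sigma$, which is legitimate because the Eisenstein series are of polynomial growth and the Mellin transforms are of rapid decay in $\mathrm{Im}(s)$. By the two displays above the integral over $\mathrm{Re}(s)=-\sigma$ equals $\sum_{\gamma}\Phi(\gamma)$, so
\[
\sum_{\gamma\in X(F)}\mathcal{F}(\Phi)(\gamma)-\sum_{\gamma\in X(F)}\Phi(\gamma)=\frac{1}{\kappa_F}\sum_{\chi\in\widehat{[\GG_m]}}\sum_{-\sigma<\mathrm{Re}(s_0)<\sigma}\mathrm{Res}_{s=s_0}E(\mathcal{F}(\Phi)_{\chi_s}).
\]
The terms with $\mathrm{Re}(s_0)>0$ are precisely the $\mathcal{F}(\Phi)$-residual terms on the left-hand side of the theorem; for the terms with $\mathrm{Re}(s_0)<0$, the identity $E(\mathcal{F}(\Phi)_{\chi_s})=E(\Phi_{\chi^{-1}_{-s}})$ gives $\mathrm{Res}_{s=s_0}E(\mathcal{F}(\Phi)_{\chi_s})=-\,\mathrm{Res}_{s=-s_0}E(\Phi_{\chi^{-1}_{s}})$, so after relabelling $\chi\mapsto\chi^{-1}$ their total equals $-\frac{1}{\kappa_F}$ times the $\Phi$-residual terms on the right-hand side of the theorem. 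Substituting and rearranging yields the asserted identity; the $\gamma$-sums converge absolutely by \S\ref{sec:Schwartz}, and the residue sums are finite sums of values of residual---hence smooth and square-integrable---automorphic forms, so every sum in the statement converges absolutely.

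The main obstacle is the third ingredient: establishing the required analytic control of $\mathcal{S}(X(\A_F),K)$, namely holomorphy of the Mellin components $\Phi_{\chi_s}$ in a right half-plane, their rapid decay in vertical strips, and the ``good section'' property pinning down the pole set of the degenerate Eisenstein series (the candidate poles being those of Kudla--Rallis for Siegel Eisenstein series on $\mathrm{Sp}_{2n}$, together with any forced on $E(\Phi_{\chi_s})$ by the boundary asymptotics of $\Phi$ and by the functional equation). It is precisely this control, conjectured in a nearby setting in \cite{BK:normalized}, that permits the residual terms to be tracked rather than assumed away. By contrast, the identification of $\mathcal{F}$ with the normalized intertwining operator underlying the second display is the conceptual heart of the matter but is already built into Theorem \ref{thm:FT}, and once the analytic facts are in place the contour-shift argument is routine.
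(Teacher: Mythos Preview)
Your proposal is correct and follows essentially the same approach as the paper's proof (given for the restatement as Theorem~\ref{thm:main}): express $\sum_{\gamma}\Phi(\gamma)$ via Mellin inversion as a vertical contour integral of $E(\Phi_{\chi_s})$, invoke the functional equation $E(\Phi_{\chi_s})=E(M_{w_0}^*(\Phi_{\chi_s}))=E(\mathcal{F}(\Phi)_{\bar\chi_{-s}})$, shift the contour across the poles of Theorem~\ref{thm:Ikeda}, and invert Mellin again. The only cosmetic difference is that the paper shifts the contour for $E(M_{w_0}^*(\Phi_{\chi_s}))$ and then substitutes $(s,\chi)\mapsto(-s,\bar\chi)$ at the end, whereas you first write down the contour representation for $\sum_\gamma\mathcal{F}(\Phi)(\gamma)$ and shift that; the residues and the final identity match either way since the contributing characters are quadratic.

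Two small points of precision worth tightening. First, the possible poles are constrained to $0\le m<\tfrac{n+1}{2}$ (resp.\ $0\le m<\tfrac{n-1}{2}$), not all $m\in\ZZ_{\ge0}$; this is Ikeda's result (Theorem~\ref{thm:Ikeda}), and it is what ensures no pole sits on $\mathrm{Re}(s)=0$. Second, the justification of the contour shift and of the interchange of sum and integral requires the uniform bound $|E(g,\Phi_{\chi_s})|_{A,B,P}\ll C(\chi_s)^{-N}$ of Theorem~\ref{thm:Eis:bound} (built from Lemmas~\ref{lem:sum:bound}--\ref{lem:Eis:bound} and Phragm\'en--Lindel\"of), not merely ``polynomial growth'' of Eisenstein series; you correctly flag this as the main obstacle, and indeed the paper's analytic work in \S\ref{sec:control}--\S\ref{sec:global:sum} is exactly what makes the otherwise routine contour argument rigorous.
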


Here $E(\Phi_{\chi_s}):=E(I_{2n},\Phi_{\chi_s})$ is a certain degenerate Siegel Eisenstein series (see \eqref{Eis0}).  

\subsection{Motivation}  
We make explicit three motivations for proving Theorem \ref{thm:intro} and the refined
definition of the Schwartz space that underlies it.  The first motivation is that 
these sorts of Poisson summation formulae are interesting for their own sake.  Given the utility of the usual Poisson summation formula on vector spaces, one expects that Poisson summation formulae on any more general schemes will be extremely useful.  For example, one could use the formula to give sharp estimates for the number of points of $X(F)$ in suitable sets (see \cite{Franke:Manin:Tschinkel,DukeRudnickSarnak}
 for counting results of a similar flavor).
 
The second motivation is in line with Braverman and Kazhdan's original motivation.  Nonabelian Poisson summation formulae are expected to imply the functional equations and meromorphic continuation of Langlands $L$-functions.  Thus
 developing notions of Fourier transforms and Schwartz spaces that are analytically tractable in more general contexts is an extremely important problem.  We hope that our work here will shed light on the (conjectural) general picture.  There has been other work on this important question, and we mention in particular the work in 
\cite{BouthierNgoSakellaridis,Cheng:Ngo:BK,GetzBK,
LafforgueJJM,WWLiSat,WWLi:Zeta,WWLi:Towards,SakellaridisSat,SakellaridisSph,Shahidi:LF}.

Our final motivation is that we plan to use the formula in Theorem \ref{thm:main} to prove an entirely new Poisson summation formula for a certain homogeneous space.  We sketch this application.  The details will be given in a sequel to this paper.  Let $(V_i,Q_i)$, $1 \leq i \leq 3$ be a triple of vector spaces of even dimension equipped with nondegenerate quadratic forms $Q_i$.  We will prove a Poisson summation formula for the affine $F$-scheme that is the zero locus of $Q_1-Q_2$ and $Q_2-Q_3$ on $V_1 \oplus V_2 \oplus V_3$ (in other words, the triples in $V_1 \oplus V_2 \oplus V_3$ on which the $Q_i$ have the same value).  The proof will involve integrating the Poisson summation identity of Theorem \ref{thm:intro} in the case $n=3$ against a product of the three $\theta$-functions attached to the quadratic forms $Q_i$.  In other words, we will apply Garrett's triple product $L$-function construction to three $\theta$-functions.  We hope to then apply this to study triple product $L$-functions of higher rank groups.

\subsection{Sketch of the proof}

The idea of the proof of Theorem \ref{thm:intro} is due to Braverman and Kazhdan and the formal argument is straightforward.  Let $P=MN$ where $N$ is the unipotent radical and $M$ is the Levi subgroup of block-diagonal matrices.  Moreover let $M^{\mathrm{ab}}:=M/[M,M]$.
For a reductive group $G$, we let 
$$
[G]:=A_G G(F) \backslash G(\A_F)\,,
$$
where $A_G$ is the neutral component in the real topology of the $\RR$-points of the maximal $\QQ$-split torus in $\mathrm{Res}_{F/\QQ}Z_G$.  

For Hecke characters $\chi \in \widehat{[\GG_m]}$ and $s \in \CC$ let
$$
\chi_s:=\chi|\cdot|^s\,,
$$
where $|\cdot|:[\GG_m] \to \CC^\times$ is the typical idelic norm.  
For each $\Phi \in \mathcal{S}(X(\A_F),K)$ and $s \in \CC$ with sufficiently large real part we define a Mellin transform
\begin{align*}
\Phi_{\chi_{s}}(g)=\int_{ M^{\mathrm{ab}}(\A_F)}\delta_P(m)^{1/2}\chi_{s}\left( \omega( m) \right)\Phi(m^{-1}g)dm\,.
\end{align*}
Then
\begin{align} \label{Eis0}
E(g,\Phi_{\chi_s}):=\sum_{\gamma \in P(F) \backslash \mathrm{Sp}_{2n}(F)}\Phi_{\chi_s}(\gamma g)
\end{align}
is a degenerate Siegel Eisenstein series, induced from the twist of the trivial representation on $M$ by $\chi_s$.  In fact, by Mellin inversion,  for $\sigma \in \RR$ sufficiently large one has
$$
\sum_{\gamma \in X(F)} \Phi(\gamma)=\sum_{\chi \in \widehat{[\GG_m]}} \frac{1}{2 \pi i \kappa_F} \int_{\mathrm{Re}(s)=\sigma}E(I_{2n},\Phi_{\chi_s})\,,
$$
where $\kappa_F:=\mathrm{Res}_{s=1} \zeta_F(s)$.  We now apply Langlands' functional equation for $E(I_{2n},\Phi_{\chi_s})$ to replace this by 
$$
\sum_{\chi \in \widehat{[\GG_m]}} \frac{1}{2 \pi i \kappa_F} \int_{\mathrm{Re}(s)=\sigma}E(I_{2n},M_{w_0}^*(\Phi_{\chi_s}))\,,
$$
where $M_{w_0}^*$ is Langlands' intertwining operator attached to the long Weyl element $w_0$.  The $*$ in the superscript indicates that it is normalized as in \cite{Ikeda:poles:triple}.  Now this Eisenstein series converges absolutely for $\mathrm{Re}(s)$ very negative.  Thus we shift the contour to $-\sigma$, picking up the poles of the Eisenstein series along the way.  Finally we apply Mellin inversion again.  Our Fourier transform $\mathcal{F}(\Phi)$ is designed so that 
$$
\mathcal{F}(\Phi)_{\chi_s}=M_{w_0}^* (\Phi_{\bar{\chi}_{-s}})
$$
(see \eqref{FT} and the diagram directly following it).  This allows us to deduce the main theorem.  For the complete argument we refer the reader to \S \ref{sec:global:sum}.  

This outline hides the substantial subtleties involved in making this argument rigorous and the related problem of establishing analytic control of the Schwartz space.  The lion's share of this is omitted or stated as conjectures in \cite{BK:normalized}.  This is not meant as a criticism of Braverman and Kazhdan's work.  It is only meant to explain why the current paper is a necessary addition to the literature.  We will mention some of the subtleties in the following section when we outline the contents of the paper.

\subsection{Outline of the paper}

In \S \ref{sec:BKspace} we explain how the geometry of $X$ can be understood using a lift of the Pl\"ucker embedding of $P \backslash \mathrm{Sp}_{2n}$ into an appropriate projective space.  This Pl\"ucker embedding 
allows us to define a function $|x|$ on $x \in X(F)$ ($F$ a local field) which measures its size.  This plays a role in describing the asymptotic behavior of functions on $X$.  

The analytic properties of Eisenstein series that we require are proven in \cite{Ikeda:poles:triple}.  We review Ikeda's construction of ``good sections'' and refine it in \S \ref{sec:normalized}.  This is then used in \S \ref{sec:Schwartz} to define the local Schwartz space and the Fourier transform.  In the non-Archimedean case we use work of Ikeda to show that compactly supported smooth functions on $X(F)$ ($F$ a local field) are in the  Schwartz space.  This is also true in the Archimedean case, but we defer the proof to Appendix \ref{App}.

We then give analytic control on the Schwartz space in \S \ref{sec:control}.  In particular, we prove that functions in the Schwartz space have polynomial growth as $|g| \to 0$ and are rapidly decreasing as $|g| \to \infty$. Finally, in \S \ref{sec:global:sum}, we prove Theorem \ref{thm:intro}, which is restated in that section as Theorem \ref{thm:main}.

\subsection{Notation and measures} \label{sec:notat}

In this paper $F$ refers to a number field or a completion of it.  When $F$ is a number field or non-Archimedean local field we denote by $\OO$ its ring of integers.  If $F$ is a local non-Archimedean field we let $\varpi$ be a uniformizer, set $q:=|\OO/\varpi|$, and let $|\cdot|$ be the usual norm, so $|\varpi|=q^{-1}$.  If $F$ is an Archimedean local field $|\cdot|$ is the standard norm if $F=\RR$ and the square of the standard norm if $F=\CC$.  This may cause some confusion when we deal with $\CC$-valued functions so we set 
$$
|z|_{\mathrm{st}}:=(z\bar{z})^{1/2}
$$
(the positive square root) for $z \in \CC$.  

In our derivation of Theorem \ref{thm:intro} we apply Mellin inversion.  Our main reference for this is \cite{Blomer_Brumley_Ramanujan_Annals}, so we use their measure conventions.  In more detail we normalize the Haar measure on 
the local field $F$ as follows:
\begin{center}
\begin{tabular}{l|l}
$F$ & $dx$\\
\hline
$\RR$ & Lebesgue measure \\
$\CC$ & twice Lebesgue measure\\
non-Archimedean & $dx(\OO)=|\mathfrak{d}|^{1/2}$
\end{tabular} 
\end{center}
Here in the non-Archimedean case $\mathfrak{d}$ is a generator of the absolute different.  To be more explicit, the Haar measure $dz$ on $\CC$ is $d(x+iy)=2dxdy$ where $dx$ and $dy$ are the usual Lebesgue measures on $\RR$.
We then let the Haar measure on $F^\times$ be
$$
dx^\times:=\zeta(1)\frac{dx}{|x|}\,.
$$
where $\zeta(s)$ is the Tate local zeta function of $F$.

We use the standard analytic number theory symbol
$$
A \ll_B C
$$
to mean that there is a constant $\kappa \in \RR_{>0}$, possibly depending on $B$, such that $A<\kappa C$.  Moreover
$$
A \asymp_B C
$$
means $A \ll_B C$ and $C \ll_B A$.

\subsection{Acknowledgments}

The authors would like to thank H.~Jacquet and A.~Pollack for their interest in the results in this paper and for helpful comments and suggestions. The authors also thank W-W.~Li, Y.~Sakellaridis, and F.~Shahidi for useful conversations,  and thank F.~Shahidi and W-W.~Li for sharing \cite{Shahidi:FT,WWLicomparison} with us. The authors also would like to thank H.~Hahn for the help with editing and for her constant encouragement.

\section{Preliminaries on the space $X$} \label{sec:BKspace}

Recall that $X:=[P,P] \backslash \mathrm{Sp}_{2n}$ where $P \leq \mathrm{Sp}_{2n}$ is the Siegel parabolic of \eqref{Siegel} and $M \leq P$ is the Levi subgroup consisting of block diagonal matrices.
Let $M^{\mathrm{ab}}:=M/[M,M]$ be the abelianization of the Levi subgroup $M$.

\begin{lem} The natural maps 
\begin{align*}
\mathrm{Sp}_{2n}(F) &\lto X(F)\\
\mathrm{Sp}_{2n}(F) &\lto P \backslash \mathrm{Sp}_{2n}(F)\\
M(F) &\lto M^{\mathrm{ab}}(F)
\end{align*}
are all surjective.
\end{lem}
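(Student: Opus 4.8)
The plan is to recognize all three maps as instances of the map on $F$-rational points induced by a quotient morphism $G \to H\backslash G$ of smooth affine $F$-groups: with $(G,H)$ equal to $(\mathrm{Sp}_{2n},[P,P])$, to $(\mathrm{Sp}_{2n},P)$, and to $(M,[M,M])$, the third quotient being $M^{\mathrm{ab}}$. I would then invoke the fundamental exact sequence of pointed sets
$$
G(F) \lto (H\backslash G)(F) \lto H^1(F,H)\,,
$$
in which the image of $G(F)$ is exactly the preimage of the distinguished class. Hence it suffices to prove $H^1(F,H)=1$ for $H=[P,P]$, $H=P$, and $H=[M,M]$.

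Since $F$ is a field of characteristic zero (a number field or one of its completions), the cohomology of unipotent groups vanishes: $H^1(F,U)=1$ for every unipotent $F$-group $U$, because $U$ is built successively out of copies of $\GG_a$ and $H^1(F,\GG_a)=0$. In particular $H^1(F,N)=1$, as by \eqref{Siegel} $N$ is the vector group of symmetric $n\times n$ matrices. I would also use $H^1(F,\GL_n)=1$ (Hilbert 90) and, from $1 \to \SL_n \to \GL_n \xrightarrow{\det} \GG_m \to 1$ together with the surjectivity of $\det\colon \GL_n(F)\to F^\times$ (immediate from diagonal matrices), the exact sequence $\GL_n(F) \to F^\times \to H^1(F,\SL_n) \to H^1(F,\GL_n)$, which forces $H^1(F,\SL_n)=1$. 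This already settles the third map, since $M\cong\GL_n$ and the map $M(F)\to M^{\mathrm{ab}}(F)=F^\times$ is $\det$.

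For the first two maps I would use the Levi decompositions. One has $1 \to N \to P \to \GL_n \to 1$, and $[P,P]=\SL_n\ltimes N$; the inclusion $N\leq[P,P]$ follows from the commutator identity
$$
\left(\begin{smallmatrix} A & \\ & A^{-t}\end{smallmatrix}\right)\left(\begin{smallmatrix} I_n & Z \\ & I_n\end{smallmatrix}\right)\left(\begin{smallmatrix} A & \\ & A^{-t}\end{smallmatrix}\right)^{-1}\left(\begin{smallmatrix} I_n & Z \\ & I_n\end{smallmatrix}\right)^{-1}=\left(\begin{smallmatrix} I_n & AZA^{t}-Z \\ & I_n\end{smallmatrix}\right)\,,
$$
whose right-hand side already runs over all of $N$ for scalar $A=\lambda I_n$ with $\lambda^2\neq 1$. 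Feeding the sequences $1 \to N \to P \to \GL_n \to 1$ and $1 \to N \to [P,P] \to \SL_n \to 1$ into the exact sequence $H^1(F,N) \to H^1(F,H) \to H^1(F,H/N)$ and using that both outer terms are trivial, one gets that the image of the one-element set $H^1(F,N)$ equals the kernel of the second map, hence is all of $H^1(F,H)$; so $H^1(F,P)=H^1(F,[P,P])=1$, and the first two maps are surjective.

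The only step that is not purely formal is the identification $[P,P]=\SL_n\ltimes N$ — in particular the inclusion $N\leq[P,P]$ — and the verification that the Levi and cohomology exact sequences apply, all of which is routine in characteristic zero; everything else reduces to Hilbert 90 and the vanishing of $H^1$ of unipotent groups. If one preferred to avoid Galois cohomology for the second map, an alternative is the Bruhat decomposition of the split group $\mathrm{Sp}_{2n}$: any $F$-point of $P\backslash \mathrm{Sp}_{2n}$ lies in a unique Bruhat cell, which is defined over $F$ and $F$-isomorphic to an affine space, and so lifts through a product of root groups to a point of $\mathrm{Sp}_{2n}(F)$.
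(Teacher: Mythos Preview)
Your proof is correct and follows essentially the same route as the paper: reduce each surjectivity to the vanishing of $H^1(F,H)$ for the relevant isotropy group via the exact sequence of pointed sets, and then kill $H^1(F,[P,P])$ using the Levi-type sequence $1 \to N \to [P,P] \to \SL_n \to 1$ together with $H^1(F,N)=1$ and $H^1(F,\SL_n)=1$ (the paper simply cites Serre for these two vanishings and says the other two maps are handled similarly). Your write-up is more detailed---you spell out the identification $[P,P]=\SL_n\ltimes N$, derive $H^1(F,\SL_n)=1$ from Hilbert~90, and observe that the third map is literally $\det$---but the strategy is the same.
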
  

\begin{proof}
To prove the first assertion it suffices to verify that $H^1(F,[P,P])=1$.  One has an exact sequence
$$
 H^1(F,N) \lto H^1(F,[P,P]) \lto H^1(F,\SL_n)\,,
$$
where the first map is induced by the inclusion of the unipotent radical $N$ of $P$ into $[P,P]$ and the second is induced by the quotient map to the maximal reductive quotient.  The left group is trivial \cite[\S III.2.1, Proposition 6]{Serre:GC} and the right group is trivial by \cite[\S III.3.2(a)]{Serre:GC}.

The last two assertions follow from similar arguments.
\end{proof}

\subsection{A Pl\"ucker embedding of $X$} \label{ssec:Plucker}
We can use the Pl\"ucker embedding to give a linear description of $X$.  More geometric information about this embedding can be found in \cite[\S 7.2]{WWLi:Zeta}.
We construct a commutative diagram
\begin{align} \label{Pl:diag}
\begin{CD}
X @>{\mathrm{Pl}}>> \wedge^n \GG_a^{2n} -\{0\} \\ @VVV  @VVV\\
P \backslash \mathrm{Sp}_{2n} @>>> \mathbb{P}(\wedge^n \GG_a^{2n})
\end{CD}
\end{align}
of morphisms of $F$-schemes as follows.  
The vertical arrows are the quotient maps.  
Let 
$e_i \in F^{2n}$ be the standard basis vector (with a $1$ in the $i$-th place and zeros elsewhere).  
Then $P$ is the stabilizer of the Lagrangian (a.k.a.~maximal isotropic) subspace 
$$
\langle e_{n+1},\dots,e_{2n} \rangle\,.
$$
The top arrow $\mathrm{Pl}$ sends $g$ to
$e_{n+1}g \wedge  \cdots \wedge e_{2n}g$, and the bottom arrow sends $g$ to the line spanned by this vector (this is just the usual Pl\"ucker embedding). 
In terms of matrices, for an $F$-algebra $R$ and $g=\left(\begin{smallmatrix} A\\ B \end{smallmatrix}\right) \in \mathrm{Sp}_{2n}(R)$ with $A,B \in M_{n \times 2n}(R)$  we have
\begin{align} \label{Pl}
\mathrm{Pl}(g)=b_1 \wedge \dots \wedge b_n\,,
\end{align}
where $b_i$ is the $i$-th row of $B$.

 If we let $\mathrm{Sp}_{2n}$ act on $X$ and $\GG_a^{2n}$ on the right then the horizontal arrows in \eqref{Pl:diag} are $\mathrm{Sp}_{2n}$-equivariant.  Let $R$ be an $F$-algebra.  There is a left action
\begin{align*}
M/M^{\mathrm{der}}(R) \times X(R) &\lto X(R)\\
(m,x) &\longmapsto mx\,.
\end{align*}
Define a character 
\begin{align} \label{omega0}\begin{split}
\omega:M(R) &\lto R^\times\\
\left(\begin{smallmatrix}  m & \\ & m^{-t} \end{smallmatrix}\right) &\longmapsto \det m\,. \end{split}
\end{align}
Thus $\omega$ induces a character of $M^{\mathrm{ab}}$.  By extending trivially on $N$ it also induces a character of $P$; we will use the symbol $\omega$ to denote all of these characters.
Then
\begin{align} \label{intertwine}
\mathrm{Pl}(mx)=\omega^{-1}(m)\mathrm{Pl}(x)\,.
\end{align}

\begin{lem}  \label{lem:inj} The map 
$$
\mathrm{Pl}:X(F) \lto \wedge^n F^{2n} -\{0\}
$$
is injective.
\end{lem}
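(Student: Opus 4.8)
The plan is to deduce injectivity from the commutative diagram \eqref{Pl:diag}, using two ingredients: the classical fact that a nonzero decomposable multivector determines its span, and the transformation law \eqref{intertwine}.

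First I would record the structure of $P$ that is needed. The Levi $M \cong \GL_n$ acts on the vector group $N$ (the symmetric matrices, a copy of $\mathrm{Sym}^2$ of the standard representation) with no nonzero invariant vector, so $N = [M,N] \subseteq [P,P]$; in fact $[P,P] = [M,M]N$, so that $[P,P]\backslash P \cong M^{\mathrm{ab}}$ and the character $\omega$ of \eqref{omega0} induces an isomorphism $M^{\mathrm{ab}} \cong \GG_m$ (because $[\GL_n,\GL_n] = \SL_n = \ker\det$). By the surjectivity lemma above we may write $X(F) = [P,P](F)\backslash\mathrm{Sp}_{2n}(F)$ and $(P\backslash\mathrm{Sp}_{2n})(F) = P(F)\backslash\mathrm{Sp}_{2n}(F)$.

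Now let $x, y \in X(F)$ with $v := \mathrm{Pl}(x) = \mathrm{Pl}(y)$, which is nonzero. Choose lifts $g, g' \in \mathrm{Sp}_{2n}(F)$ of $x, y$. By \eqref{Pl} the vector $v$ is the wedge of the bottom $n$ rows of $g$, hence a generator of $\wedge^n W$, where $W$ is the span of those rows; explicitly $W = W_0 g$ with $W_0 := \langle e_{n+1},\dots,e_{2n}\rangle$. Since $v \neq 0$ is decomposable, $W = \{w \in F^{2n} : w \wedge v = 0\}$ is determined by $v$, and the same applies to $g'$, so $W_0 g = W_0 g'$. Hence $g g'^{-1}$ stabilizes the Lagrangian $W_0$, so $g g'^{-1} \in P(F)$, i.e. $y = [P,P](F)\,p\,g$ for some $p \in P(F)$. (Equivalently, the bottom arrow of \eqref{Pl:diag} is the Plücker embedding of the Lagrangian Grassmannian $P\backslash\mathrm{Sp}_{2n}$, which is injective on $F$-points.) Writing $p = mn$ with $m \in M(F)$ and $n \in N(F)$, and using $N \subseteq [P,P]$ with $[P,P]$ normal in $P$, we get $y = [P,P](F)\,m\,g = \bar m \cdot x$, where $\bar m \in M^{\mathrm{ab}}(F)$ is the image of $m$ and the dot denotes the left action of $M^{\mathrm{ab}}$ on $X$.

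Finally, \eqref{intertwine} gives $v = \mathrm{Pl}(\bar m \cdot x) = \omega^{-1}(m)\,v$, so $\omega(m) = \det m = 1$; hence $\bar m$ is trivial in $M^{\mathrm{ab}}(F) \cong F^\times$, and therefore $y = x$. I do not anticipate a genuine obstacle: the one real input is the injectivity of the Plücker embedding of the Grassmannian on $F$-points --- equivalently, that a nonzero decomposable $n$-vector pins down an $n$-dimensional subspace --- and the rest is bookkeeping with the parabolic $P$ and the character $\omega$.
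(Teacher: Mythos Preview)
Your proof is correct and follows essentially the same route as the paper's: both use the commutative diagram \eqref{Pl:diag}, invoke injectivity of the classical Pl\"ucker embedding (your ``decomposable vector determines its span'' step) to reduce to a single fiber of $X(F)\to P\backslash\mathrm{Sp}_{2n}(F)$, and then use \eqref{intertwine} to see that $\mathrm{Pl}$ is injective on that fiber. The paper compresses this into two sentences, while you spell out the structure of $[P,P]$ and the fiber identification explicitly; the extra detail is sound and not a genuine departure.
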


\begin{proof}
In view of \eqref{intertwine}, a fiber of the map $X(F) \to P \backslash \mathrm{Sp}_{2n}(F)$ is mapped bijectively onto a fiber of the map $\wedge^n F^{2n}-\{0\} \to \mathbb{P}(\wedge^n F^{2n})$.  Since the Pl\"ucker embedding is injective we deduce the lemma.
\end{proof}

For the remainder of this section let $F$ be a local field of characteristic zero.  Throughout this paper in the archimedian case we let $K \leq \mathrm{Sp}_{2n}(F)$ be a maximal compact subgroup  and in the nonarchimedian case we let $K$ be an $\mathrm{Sp}_{2n}(F)$-conjugate of 
$$
K_0:=\mathrm{Sp}_{2n}(\OO).
$$
Thus in either case the Iwasawa decomposition $\mathrm{Sp}_{2n}(F)=P(F)K$ holds.
When considering the analytic properties of functions on $X(F)$ it is useful to have a way to measure the size of a point on $X(F)/K$.  This is what the constructions below will afford us (see Proposition \ref{prop:XmodK}).

Let $\mathrm{Sp}_{2n}(F)$ act on $F^{2n}$ on the right.  One obtains an induced action on $\wedge^n F^{2n}$.  When $F$ is Archimedean choose a positive definite bilinear form $(\cdot,\cdot)$ on $\wedge^n F^{2n}$ that is invariant under the action of $K$ and set $|x|=(x,x)^{[F:\RR]/2}$.  In the non-Archimedean case let $e_1,\dots,e_{2n}$ be the standard basis of $F^{2n}$ and let 
$$
\{e_{\alpha_1,\dots,\alpha_n}:=e_{\alpha_1} \wedge \dots \wedge e_{\alpha_n}:1 \leq \alpha_1<...<\alpha_n\leq 2n\}
$$
be the natural induced basis of $\wedge^n F^{2n}$.  Then
set
$$
\left|\sum_{1 \leq \alpha_{1}<\dots<\alpha_n \leq 2n}x_{\alpha_1,\dots,\alpha_n}e_{\alpha_1,\dots,\alpha_n}\right|=\max_{1\leq \alpha_1<\dots<\alpha_n \leq 2n}|x_{\alpha_1,\dots,\alpha_n}|\,.
$$
We claim that $|\cdot|$ is invariant under the natural action of $\GL(\wedge^n\OO^{2n})$ on the left and right.  
  To check this it suffices to treat the case where $x \in \wedge^{n}F^{2n}-\{0\}$.  In this case $|x|=q^{-k}$ where $k$ is the largest integer such that $\varpi^{-k}x \in \wedge^n \OO^{2n}$.  It is clear from this latter characterization that $|x|$ is preserved under $\GL(\wedge^n\OO^{2n})$.  In particular it is invariant under the action of $\mathrm{Sp}_{2n}(\OO)$ induced from its natural right action on $\OO^{2n}$.
 We then set 
\begin{align} \label{norm:def}
 |g|:=|\mathrm{Pl}(g)|\,.
\end{align}

For any $c \in \ZZ$ let
\begin{align} \label{c:def}
c(x):=\left( \begin{smallmatrix} x^{-c} & & \\
& I_{n-1} & \\ & & x^{c} & \\ & & & I_{n-1} \end{smallmatrix} \right).
\end{align}
In this way we obtain an isomorphism $\ZZ \cong X_*(M/M^{\mathrm{der}})$; we often use this isomorphism to identify integers with cocharacters of $M/M^{\mathrm{der}}$.  
With respect to this basis 
$|1(t)| \to 0$ as $|t| \to 0$.
The Iwasawa decomposition implies that 
$$
X(F)=\coprod_{c \in \ZZ} [P,P](F)c(\varpi)K_0
$$
in the non-Archimedean case, and 
$$
X(F)=\coprod_{t \in \RR_{>0}}[P,P](F)1(t)K
$$
in the Archimedean case.

In the non-Archimedean case for $c \in \ZZ$ set
\begin{align} \label{1c}
\one_c:=\one_{[P,P](F)c(\varpi)K_0}\,.
\end{align}
Then the functions $\one_c$, $c \in \ZZ$ form a basis for
$$
C_c^\infty(X(F)/K_0)
$$
by the Iwasawa decomposition.

\begin{prop} \label{prop:XmodK} 
Suppose $K=K_0$ when $F$ is nonarchimedian.  There is a continuous injection
\begin{align*}
X(F)/K \lto \RR_{>0} \\
[P,P](F)gK \longmapsto |g|\,.
\end{align*}
\end{prop}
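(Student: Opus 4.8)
The plan is to check in turn that the assignment $[P,P](F)gK \mapsto |g|$ is well defined on $X(F)/K$, that its values lie in $\RR_{>0}$, that it is continuous, and that it is injective. The substantive point is well-definedness, and it is exactly why $|\cdot|$ was arranged to be $K$-invariant above. For well-definedness, recall from \eqref{norm:def} that $|g|=|\mathrm{Pl}(g)|$ and that $\mathrm{Pl}$ has source $X$ by \eqref{Pl:diag}; hence $|g|$ depends only on the image of $g$ in $X(F)=[P,P](F)\backslash \mathrm{Sp}_{2n}(F)$, so it is automatically left $[P,P](F)$-invariant. Since the horizontal arrows of \eqref{Pl:diag} are $\mathrm{Sp}_{2n}$-equivariant, $\mathrm{Pl}(gk)=\mathrm{Pl}(g)k$ for $k\in K$, and $|\cdot|$ on $\wedge^n F^{2n}$ is $K$-invariant (by the choice of positive definite form in the Archimedean case, and because $K=K_0$ acts through $\GL(\wedge^n \OO^{2n})$ in the non-Archimedean case); thus $|gk|=|g|$ and the assignment descends to $X(F)/K$. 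Its values lie in $\RR_{>0}$ because $\mathrm{Pl}(g)\in \wedge^n F^{2n}-\{0\}$ and $|\cdot|$ is strictly positive on nonzero vectors.

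For continuity, observe that by the matrix formula \eqref{Pl} the composite $\mathrm{Sp}_{2n}(F)\to X(F)\to \wedge^n F^{2n}-\{0\}$ (the first arrow being the quotient map and the second being $\mathrm{Pl}$) is given by polynomials in the matrix entries — namely the $n\times n$ minors of the lower block — hence is continuous, and $|\cdot|$ is continuous on $\wedge^n F^{2n}-\{0\}$; so $g\mapsto |g|$ is continuous on $\mathrm{Sp}_{2n}(F)$. By the preceding paragraph it is constant on the double cosets $[P,P](F)gK$, so it factors through $X(F)/K$, and the factored map is continuous for the quotient topology.

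For injectivity, I will use the Iwasawa-type decomposition of $X(F)$ displayed above: every class in $X(F)/K$ equals $[P,P](F)1(t)K$ for a unique $t\in\RR_{>0}$ when $F$ is Archimedean, and every class in $X(F)/K_0$ equals $[P,P](F)c(\varpi)K_0$ for a unique $c\in\ZZ$ when $F$ is non-Archimedean. A direct computation from \eqref{c:def} and \eqref{Pl} gives $\mathrm{Pl}(c(x))=x^c\,(e_{n+1}\wedge\dots\wedge e_{2n})$, and feeding this into the definition of $|\cdot|$ yields
\begin{align*}
|1(t)|=c_0\,t^{[F:\RR]}\ \ \text{with}\ \ c_0:=|e_{n+1}\wedge\dots\wedge e_{2n}|>0,\qquad\text{and}\qquad |c(\varpi)|=q^{-c}.
\end{align*}
Since $t\mapsto c_0\,t^{[F:\RR]}$ is strictly increasing on $\RR_{>0}$ and $c\mapsto q^{-c}$ is injective on $\ZZ$, distinct classes in $X(F)/K$ have distinct images in $\RR_{>0}$, which is the asserted injectivity. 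The step I expect to require the most care is this last one: one must keep the (mildly nonstandard) normalizations straight — in particular, $|\cdot|$ on $\wedge^n F^{2n}$ must scale with the correct power of a scalar, and $|\cdot|$ on the field $F$ is the square of the usual norm when $F=\CC$, so that $t\mapsto|t|$ is still strictly increasing on $\RR_{>0}$ — together with the $K$-invariance and continuity of $|\cdot|$ used above; none of this is a genuine obstacle.
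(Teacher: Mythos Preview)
Your proof is correct and follows essentially the same approach as the paper: both reduce injectivity to the Iwasawa decomposition of $X(F)$ and then compute $|1(t)|$ (Archimedean) and $|c(\varpi)|=q^{-c}$ (non-Archimedean) directly from \eqref{c:def} and \eqref{Pl}. The paper simply dismisses well-definedness and continuity as ``easy to see'' whereas you spell these out, and the paper records $|1(t)|=|t|\,|e_{n+1}\wedge\cdots\wedge e_{2n}|$ (which, under the paper's norm conventions on $F$, is exactly your $c_0\,t^{[F:\RR]}$ for $t\in\RR_{>0}$).
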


\begin{proof}
It is easy to see that the map is well-defined and continuous. We only need to check injectivity.  In the Archimedean case one has 
$$
|1(t)|=|t||e_{n+1} \wedge \cdots \wedge e_{2n}|\,.
$$ 
Thus the map is injective.
If $F$ is non-Archimedean  then  $|c(\varpi)|=q^{-c}$, and thus we deduce injectivity in this case as well.
\end{proof}

\section{Normalized intertwining operators and excellent sections}
\label{sec:normalized}
For characters $\chi:F^\times \to \CC^\times$ and $s \in \CC^\times$ let $\chi_s:=\chi|\cdot|^s$.    
Let
\begin{align} \label{norm:ind}
I(\chi_{s}):=\mathrm{Ind}_{P}^{\mathrm{Sp}_{2n}}(\chi_{s})
\end{align}
(normalized induction) be the space of functions 
$f:\mathrm{Sp}_{2n}(F) \to \CC$ such that 
$$
f(pg)=\chi_{s}\left(\omega(p)\right)\delta_P(p)^{1/2}f(g)\,,
$$
where $\delta_P:P(F) \to \CC^\times$ is the modular quasi-character and $\omega$ is as in \eqref{omega0}.

We now recall some definitions from \cite[\S 1.2]{Ikeda:poles:triple}.
Let $E:=\CC[q^{-s},q^{s}]$ if $F$ is non-Archimedean and let $E$ be the ring of entire functions on $\CC$ if $F$ is Archimedean.
Assume $\chi$ is unitary. 
A function 
\begin{align*}
f^{(\cdot)}:\mathrm{Sp}_{2n}(F) \times \CC &\lto \CC\\
(g,s) &\longmapsto f^{(s)}(g)
\end{align*} is a \textbf{holomorphic section of $I(\chi_{s})$} if
\begin{enumerate}
\item For each $s \in \CC$, $f^{(s)} \in I(\chi_{s})$ (as a function of $g$),
\item For each $g \in \mathrm{Sp}_{2n}(F)$, $f^{(s)}(g) \in E$ as a function of $s$, and
\item The function $f^{(s)}$ is right $K$-finite.
\end{enumerate}
A function $f^{(\cdot)}$ on $\mathrm{Sp}_{2n}(F) \times \CC$ that is meromorphic in the second factor of the argument is a \textbf{meromorphic section} if there is an $\alpha \in E$ such that $\alpha f^{(\cdot)}$ is a holomorphic section.  

Let $T \leq \mathrm{Sp}_{2n}$ be the maximal torus of diagonal matrices, let $W_{\mathrm{Sp}_{2n}}$ be the Weyl group of $T$ in $\mathrm{Sp}_{2n}$ and let $W_M$ denote the Weyl group of $T$ in $M$.  Let $\Phi_{\mathrm{Sp}_{2n}}$ be the set of roots of $\mathrm{Sp}_{2n}$ with respect to $T$. 
We let
 $\Omega_n$ be the complete set of representatives for $W_{\mathrm{Sp}_{2n}} / W_M$ obtained by choosing the unique element of minimal length in each coset as follows. For each subset $I=\{i_1, i_2, \ldots, i_k\}$ of $\{1,2,\ldots,n\}$
let 
$$
J := \{j_1, j_2 \ldots, j_{n-k}\} = \{1,2,\ldots, n\} - I\,,
$$
 where $i_1<\dots<i_k$ and $j_1 <\dots <j_{n-k}$.
Define an element $w_I$ of $W_{\mathrm{Sp}_{2n}}$ by 
\begin{align*}
\begin{matrix}
& t_1 \mapsto t_{j_1}, &\ldots, &t_{n-k} \mapsto t_{j_{n-k}}\,,\\
& t_{n-k+1} \mapsto t^{-1}_{i_k}, & \ldots,& t_n \mapsto t^{-1}_{i_1}\,,
\end{matrix}
\end{align*}
where 
\begin{align*}\left(\begin{smallmatrix}
t_1 &&&&&\\
&\ddots &&&&\\
&&t_n &&&\\
&&&t_1^{-1}&&\\
&&&&\ddots &\\
&&&&&t_n^{-1}
\end{smallmatrix} \right)\in T(F)\,.
\end{align*}
In particular $w_0:=w_{\{1,\dots,n\}}$ is the long Weyl element which is conjugation by 
$$
\left(\begin{smallmatrix} & & && & -1\\ & & && \reflectbox{$\ddots$} & \\ & & & -1 & &\\ & & 1 & & &\\ & \reflectbox{$\ddots$}& & &\\ 1 & & & & \end{smallmatrix}\right).
$$
\quash{The length of $w_I$ is given by 
\begin{align}\label{length}
\begin{split}
\ell(w_I) = & \ \#\{\alpha \in \Phi_{\mathrm{Sp}_{2n}} | \alpha > 0, w_I \alpha < 0\}\\
= & \ \sum_{r=1}^k (n+1-i_r).
\end{split}
\end{align}}

For each $w \in \Omega_n$ and quasi-character $\chi:T(F) \to \CC^\times$ let $(\chi_s)^w(t):=\chi_s(w^{-1}tw)$ and let
$$
M_w:I(\chi_{s}) \lto I((\chi_{s})^w)
$$ 
denote the usual intertwining operator.  For $F$-algebras $R$ let
$$
B_{2n}(R):=\left\{\left(\begin{smallmatrix}A & * \\0 & A^{-t} \end{smallmatrix}\right) \in \mathrm{Sp}_{2n}(R):
A \textrm{ is upper triangular} \right\}.
$$
This is a Borel subgroup of $\mathrm{Sp}_{2n}$.  We let $N_{2n} \leq B_{2n}$ be the unipotent radical and let $N_{2n}^{\mathrm{op}}$ be the unipotent radical of the opposite Borel.
We recall that 
\begin{align} \label{inter}
M_{w}:I(\chi_{s}) &\lto I((\chi_{s})^{w})\\
f^{(s)} &\longmapsto \int_{N_{2n}(F) \cap wN^{\mathrm{op}}_{2n}(F)w^{-1}}f^{(s)}(w^{-1}ng)dn\,. \nonumber
\end{align}
   Here the integral is only well-defined for $\mathrm{Re}(s)$ sufficiently large; in general one has to define it via analytic continuation (\cite[Chapter 4]{Shahidi:book} is a nice reference).
To make the definition of $M_w$ precise we must fix the measure $dn$.  We proceed as follows:  For each $\alpha \in \Phi_{\mathrm{Sp}_{2n}}$ let $N_{\alpha} \leq N$ be the corresponding root subgroup; it comes equipped with an isomorphism of topological groups $F \tilde{\to} N_\alpha(F)$.  We let $dn_{\alpha}$ be the Haar measure on $N_{\alpha}(F)$ given by transporting the Haar measure on $F$ that is self-dual with respect to $\psi$ to $N_{\alpha}(F)$ via this isomorphism.  Then we let $dn=\prod dn_\alpha$ be the product measure, where the product is over the roots occurring in $N_{2n} \cap wN^{\mathrm{op}}_{2n}w^{-1}$.

Ikeda, following Piatetski-Shapiro and Rallis \cite{GPSR:LNM, PSRallisTriple}, found a convenient normalization for these intertwining operators \cite[\S 1.2, p.~195]{Ikeda:poles:triple}.  We now recall it because it plays a role in what follows.  For $w = w_I$, $I= \{i_1, i_2, \ldots, i_k\}$, let 
$$\mu_w(r) = \begin{cases}
\mathrm{min}\{m | n-k+1 \leq m \leq n, i_{n-m+1}<j_r\}\,, & \text{ if } 1 \leq r \leq n-k\,,\\
r+1\,, & \text{ if } n-k+1 \leq r \leq \lfloor n/2 \rfloor\,,
\end{cases}$$
then set
\begin{align} \label{ds}
\begin{split}
a_w(s,\chi):=& L(s+\tfrac{n+1}{2}-k, \chi)\prod_{\substack{r=1\\ i_r \geq 2r}}^{\mathrm{min}(k,\lfloor n/2 \rfloor)} L(2s+i_r-2r+1, \chi^2) \\
& \times \prod_{\substack{r=1\\ i_r \leq 2r-1}}^{\mathrm{min}(k,\lfloor n/2 \rfloor)}  L(2s-n+r+\mu_w(r)-1, \chi^2) \prod_{r=k+1}^{\lfloor n/2 \rfloor} L(2s+n+1-2r, \chi^2)\,,\\
d(s,\chi):=&L\left(s+\tfrac{n+1}{2},\chi \right)\prod_{r=1}^{\lfloor n/2 \rfloor}L(2s+n+1-2r,\chi^2)\,,\\
c_w(s,\chi):=&\frac{a_w(s,\chi)}{d(s,\chi)}\,.
\end{split}
\end{align}
We note that there is a typo in \cite{Ikeda:poles:triple}; the inequality $i_{n-m+1}<j_r$ in the definition of $\mu_w(r)$ in loc.~cit.~is reversed (see \cite[p.26]{GPSR:LNM}, which unfortunately uses different notation).
One has
\begin{align} \label{basic:cases}
\begin{split}
c_{I_{2n}}(s,\chi)&=1 \quad \textrm{ and } \quad a_{I_{2n}}(s,\chi)=d(s,\chi)\,,\\
a_{w_0}(s,\chi)&=L(s+\tfrac{1-n}{2}, \chi) \prod_{r=1}^{\lfloor n/2 \rfloor}  L(2s-n+2r, \chi^2)\,. 
\end{split}
\end{align}
  As explained below \cite[(1.2.7)]{Ikeda:poles:triple}, these $L$-factors are defined so that for non-Archimedean $F$ and unramified $\chi$ the operator $M_w$ takes the spherical vector in $I(\chi_s)$ to $c_w(s,\chi)$ times the spherical vector in $I((\chi_s)^w)$.  
Moreover, in the non-Archimedean spherical case, $d(s,\chi)$ is the smallest common denominator of the $c_w(s,\chi)$ as $w$ varies.

As explained below \cite[(1.2.7)]{Ikeda:poles:triple}, these $L$-factors are defined so that for non-Archimedean $F$ and unramified $\chi$ the operator $M_w$ takes the spherical vector in $I(\chi_s)$ to $c_w(s,\chi)$ times the spherical vector in $I((\chi_s)^w)$.  
Moreover, in the non-Archimedean spherical case, $d(s,\chi)$ is the smallest common denominator of the $c_w(s,\chi)$ as $w$ varies.

For an additive character $\psi$ we also define normalized intertwining operators
\begin{align}
M_{w_0}^*:=M_{w_0,\psi}^*:=\gamma\left(s-\tfrac{n-1}{2},\chi,\psi\right)\prod_{r=1}^{\lfloor n/2 \rfloor}\gamma(2s-n+2r,\chi^2,\psi)M_{w_0}\,.
\end{align}
Note that $\gamma(s,\chi,\psi)=\varepsilon'(s,\chi,\psi)$ in the notation of \cite{Ikeda:poles:triple}.  This notation is also used in \cite{GodementJacquetBook}.

\begin{defn} A meromorphic section $f^{(s)}$ of $I(\chi_{s})$ is a {\textbf{good section}} (of $I(\chi_{s})$) if for any $w \in \Omega_n$ the section 
$$
\frac{M_wf^{(s)}}{a_w(s,\chi)}
$$
is holomorphic.  
\end{defn}
\noindent This definition is from Ikeda \cite{Ikeda:poles:triple} (note that $a_w(s,\chi)=d(s,\chi)c_w(s,\chi)$).  We note that every holomorphic section is good \cite[Lemma 1.3]{Ikeda:poles:triple}.

In the Archimedean case we require a refinement of the notion of a good section.  To state the refinement, for real numbers $A \leq B$ and polynomials $P \in \CC[x]$ and meromorphic functions 
$f:\CC \to \CC$, let
\begin{align} \label{VAB}
V_{A,B}:=\{s \in \CC:A \leq \mathrm{Re}(s) \leq B\}\,,
\end{align}
and
\begin{align}
|f|_{A,B,P}:&=\mathrm{sup}_{s \in V_{A,B}}| P(s)f(s) |_{\mathrm{st}}\,.
\end{align}
Note that this may be $\infty$. 

Motivated by the definition of $\mathcal{L}(\tau)$ in \cite{JacquetPerfectRS}, we make the following definition:

\begin{defn}  Assume $F$ is Archimedean.
A good section $f^{(s)}$ of $I(\chi_{s})$ is an {\textbf{excellent section}} (of $I(\chi_{s})$) if for any $g \in \mathrm{Sp}_{2n}(F)$, real numbers $A<B$, and any polynomials 
$$
P_w:=P_{w,\chi} \in \CC[x]
$$ 
($w\in\{\mathrm{Id},w_0\}$) such that $P_{w}(s)a_w(s,\chi)$ has no poles for $s \in V_{A,B}$ one has
\begin{align*}
\left|M_w f^{(s)}(g)\right|_{A,B,P_w}<\infty\,.
\end{align*}
If $F$ is non-Archimedean then we say any good section is excellent.  
\end{defn}

This is a complicated definition, but it appears to be necessary. One needs control of $M_wf^{(s)}$ in vertical strips in order to define Mellin transforms of these functions. One might try to replace the $P_w$ by $P(s)/a_w(s,\chi)$ for arbitrary $P(s)$, but this turns out to be awkward because $a_w(s,\chi)$ is rapidly decreasing in vertical strips (away from its poles).  

\begin{lem} \label{lem:gamma:poly}
Let $A<B$, and for $w \in \{\mathrm{Id},w_0\}$ let $P_{w,\bar{\chi}},P_{w,\chi} \in \CC[x]$ be polynomials such that $P_{w,\bar{\chi}}(s)a_{w}(-s,\bar{\chi})$ and $P_{w,\chi}(s)a_{w}(s,\chi)$ are holomorphic and nonvanishing in $V_{A,B}$.  Then the quotients
$$
\frac{P_{\mathrm{Id},\bar{\chi}}(s)a_{\mathrm{Id}}(-s,\bar{\chi})}{P_{w_0,\chi}(s)a_{w_0}(s,\chi)}, \quad \frac{P_{w_0,\chi}(s)a_{w_0}(s,\chi)}{P_{\mathrm{Id},\bar{\chi}}(s)a_{\mathrm{Id}}(-s,\bar{\chi})},\quad \frac{P_{\mathrm{Id},\chi}(s)a_{\mathrm{Id}}(s,\chi)}{P_{w_0,\bar{\chi}}(s)a_{w_0}(-s,\bar{\chi})}, \quad \frac{P_{w_0,\bar{\chi}}(s)a_{w_0}(-s,\bar{\chi})}{P_{\mathrm{Id},\chi}(s)a_{\mathrm{Id}}(s,\chi)}
$$
are all bounded by polynomials in $s$ in $V_{A,B}$.
\end{lem}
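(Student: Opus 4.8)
The plan is to reduce everything to Stirling's asymptotics for the $\Gamma$-function. I would begin by reading off from \eqref{ds} and \eqref{basic:cases} the relevant structural feature: each of $a_{\mathrm{Id}}(s,\chi)=d(s,\chi)$ and $a_{w_0}(s,\chi)$ is, up to a prefactor $Ce^{\tau s}$ with $C\in\CC^\times$ and $\tau\in\RR$ (the normalizing powers of $\pi$, resp.\ $2\pi$, carried by the Archimedean $\Gamma$-factors), a product of $\lfloor n/2\rfloor+1$ factors $\Gamma(\lambda s+\mu)$: one factor coming from the $L$-factor of $\chi$, with $|\lambda|=\tfrac12$ if $F=\RR$ and $|\lambda|=1$ if $F=\CC$, together with $\lfloor n/2\rfloor$ factors coming from the $L$-factors of $\chi^2$, with $|\lambda|=1$ if $F=\RR$ and $|\lambda|=2$ if $F=\CC$; here each shift $\mu\in\CC$ is a constant depending only on $n$ and $\chi$. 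Since replacing $s$ by $-s$ only flips the signs of the slopes $\lambda$ and replacing $\chi$ by $\bar{\chi}$ changes none of the $|\lambda|$, each of $a_{\mathrm{Id}}(\pm s,\chi)$, $a_{\mathrm{Id}}(\pm s,\bar{\chi})$, $a_{w_0}(\pm s,\chi)$, $a_{w_0}(\pm s,\bar{\chi})$ has its $\Gamma$-factors indexed by the same multiset of absolute slopes, and so shares the same value of $\Lambda:=\sum_j|\lambda_j|$.

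Next I would invoke Stirling: uniformly for $s$ in a fixed vertical strip and away from the poles (which are finitely many and lie in a bounded horizontal band),
\[
|\Gamma(\lambda s+\mu)|\ \asymp\ (1+|s|)^{\lambda\,\mathrm{Re}(s)+\mathrm{Re}(\mu)-1/2}\,e^{-\frac{\pi}{2}|\lambda|\,|\mathrm{Im}(s)|}\qquad\text{as }|\mathrm{Im}(s)|\to\infty,
\]
with the exponent $\lambda\,\mathrm{Re}(s)+\mathrm{Re}(\mu)-1/2$ bounded on $V_{A,B}$. It follows that on $V_{A,B}$ outside a bounded horizontal band each of the eight functions above equals a factor bounded above and below by positive powers of $(1+|s|)$ (with exponents varying over a bounded range as $\mathrm{Re}(s)$ ranges over $[A,B]$), times $e^{-\frac{\pi}{2}\Lambda|\mathrm{Im}(s)|}$, times $|C|e^{\tau\,\mathrm{Re}(s)}$, the latter bounded above and below by positive constants on $V_{A,B}$. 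Because $\Lambda$ is common to the numerator and denominator of each of the four quotients of the statement, the factors $e^{-\frac{\pi}{2}\Lambda|\mathrm{Im}(s)|}$ cancel, so outside the band each of the four quotients stripped of its polynomial factors, and likewise its reciprocal, is bounded by a polynomial in $s$.

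It would then remain to put the polynomials $P_{w,\eta}$ back in and to handle the band. The function $a_w(\pm s,\eta)$ (for $w\in\{\mathrm{Id},w_0\}$, $\eta\in\{\chi,\bar{\chi}\}$) has only finitely many poles in $V_{A,B}$, confined to a bounded horizontal band, and by hypothesis the only zeros of $P_{w,\eta}$ in $V_{A,B}$ sit at such poles; I would fix $T_0>0$ so that all of these poles and zeros lie in $\{\,|\mathrm{Im}(s)|<T_0\,\}$. On $V_{A,B}\cap\{\,|\mathrm{Im}(s)|\ge T_0\,\}$ each $P_{w,\eta}$ is zero-free, hence bounded above by a polynomial and below by a positive constant, so combined with the previous step each of the four quotients of the statement and each of their reciprocals is polynomially bounded there. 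On the compact slab $V_{A,B}\cap\{\,|\mathrm{Im}(s)|\le T_0\,\}$ I would argue instead that, by hypothesis, each of the four quotients has numerator of the form $P\cdot a$ holomorphic on $V_{A,B}$ and denominator of the form $P\cdot a$ holomorphic and nonvanishing on $V_{A,B}$, so it is holomorphic on $V_{A,B}$; the same applies to its reciprocal, whose denominator is again one of the hypothesized holomorphic nonvanishing products. Being holomorphic, it is bounded on the compact slab, and so is its reciprocal; combining the two regions then gives the lemma in the Archimedean case. In the non-Archimedean case each $a_w(\pm s,\eta)$ is a finite product of Tate local $L$-factors at affine-linear arguments in $s$, hence a zero-free rational function of $q^{\mp s}$ whose poles lie on finitely many vertical lines; a polynomial can clear these poles inside $V_{A,B}$ only if no such line meets $V_{A,B}$, so the hypotheses force each $a_w(\pm s,\eta)$ to be holomorphic (hence automatically nonvanishing) on $V_{A,B}$, thus, being $\tfrac{2\pi i}{\log q}$-periodic, bounded above and below by positive constants there, and the claim is immediate since each $P_{w,\eta}$ is then zero-free on $V_{A,B}$.

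The step I expect to be the crux is the first one: recognizing and justifying that the $\Gamma$-factor products defining $a_{\mathrm{Id}}(\mp s,\eta)$ and $a_{w_0}(\pm s,\eta)$ carry the same total exponential rate $\tfrac{\pi}{2}\Lambda$ in vertical strips, which is precisely what makes the four quotients of polynomial, rather than exponential, size. Once that is in hand, everything else is routine: bookkeeping with polynomials together with a compactness argument on a bounded slab.
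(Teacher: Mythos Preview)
Your argument is correct. Both your proof and the paper's rest on Stirling asymptotics for $\Gamma$, but the routes differ. The paper observes that $a_{\mathrm{Id}}(-s,\bar\chi)=d(-s,\bar\chi)$ and $a_{w_0}(s,\chi)$ match up \emph{factor by factor} under the reflection $s'\mapsto 1-s'$, $\eta\mapsto\bar\eta$: each pair of corresponding $L$-factors is of the form $L(1-s',\bar\eta)/L(s',\eta)$ for a suitable affine $s'$. This reduces the lemma to bounding a single ratio $p_{\bar\chi}(s)L(1-s,\bar\chi)\big/ p_{\chi}(s)L(s,\chi)$; the paper then uses $\Gamma(s+1)=s\Gamma(s)$ to shift the strip to $(-\tfrac12,\tfrac12)$ and invokes a Stirling lemma from Moreno. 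Your approach instead applies Stirling directly to each $\Gamma(\lambda s+\mu)$ in the full product, and the crux you identified---that the total exponential rate $\tfrac{\pi}{2}\Lambda$ agrees for numerator and denominator---is exactly the aggregate version of the paper's factor-by-factor reflection. The paper's route is slightly cleaner conceptually (it explains \emph{why} the rates match: the quotient is essentially a product of $\gamma$-factors) and avoids having to track the polynomials through Stirling separately; your route is more self-contained and does not rely on spotting the reflection structure. Your treatment of the non-Archimedean case is correct but superfluous: the lemma is only invoked in the Archimedean case, since in the non-Archimedean case excellent coincides with good by definition.
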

We note that $a_{w}(s,\chi)$ is nonvanishing and has a finite number of poles in any vertical strip, so polynomials as in the lemma can always be chosen.  For the purposes of the proof we use the standard notation
\begin{align} \label{Gamma}
\Gamma_F(s):=\begin{cases} \pi^{-s/2}\Gamma\left(\tfrac{s}{2} \right) &\textrm{ if }F =\RR\\
2(2 \pi)^{-s}\Gamma(s) &\textrm{ if }F=\CC
\end{cases}\,.
\end{align}
If $F$ is real, let $\mu$ denote the sign character, and if $F$ is complex, let 
\begin{align} \label{mu:def}
\mu(z):=\frac{z}{(z\bar{z})^{1/2}}\,.
\end{align}
Here in the denominator we mean the positive square-root.
We will also use the well-known fact that every character $\chi:F^\times \to \CC^\times$ can be written uniquely as 
\begin{align} \label{explicit:chi}
\chi:=|\cdot|^{it}\mu^\alpha\,,
\end{align}
where $t \in \RR$, $\alpha \in \ZZ$ and we assume $\alpha \in \{0,1\}$ if $F$ is real.
One has
\begin{align} \label{L:id}
L(s,|\cdot|^{it}\mu^\alpha)=\Gamma_F\left(s+it+\frac{|\alpha|}{[F:\RR]}\right)
\end{align}
\cite[Appendix]{JacquetPerfectRS}.

\begin{proof}
It suffices to verify that for any $A<B$ and any polynomials $p_{\bar{\chi}},p_{\chi}$ such that $p_{\bar{\chi}}(s)L(1-s,\bar{\chi})$ and $p_{\chi}(s)L(s,\chi)$ are holomorphic and nonvanishing in $V_{A,B}$ the quotient 
\begin{align*}
\frac{p_{\bar{\chi}}(s)L(1-s,\bar{\chi})}{p_{\chi}(s)L(s,\chi)}
\end{align*}
is bounded by a polynomial for $s \in V_{A,B}$.  Write $\chi$ as in \eqref{explicit:chi}.  Then we see it suffices to show that for $s \in V_{A,B}$ with $\mathrm{Im}(s)$ large enough in a sense depending on $\chi$ that 
\begin{align} \label{quot}
\frac{\Gamma\left(\frac{1-s-it+\frac{|\alpha|}{[F:\RR]}}{2[F:\RR]^{-1}}\right)}{
\Gamma\left(\frac{s+it+\frac{|\alpha|}{[F:\RR]}}{2[F:\RR]^{-1}}\right)}
\end{align}
is bounded by a polynomial in $s$.  Recall that $\Gamma(s+1)=s\Gamma(s)$.  Thus if $F$ is complex, replacing $s$ by $s + 1$ multiplies \eqref{quot} by a rational function of $s$.  If $F$ is real, replacing $s$ by $s + 1$ has the effect of multiplying by a rational function of $s$ and replacing $\chi$ by $\chi\mu$.  In 
either case we see it suffices to assume that 
$(A,B)=(-\tfrac{1}{2},\tfrac{1}{2})$.  In this case we can apply \cite[\S III.5, Lemmas 3 and 5]{Moreno} to deduce the desired bound.
\end{proof}

\begin{lem} \label{lem:excellent} The section $f^{(s)}$ of $I(\chi_s)$ is excellent if and only if  
$M_{w_0}^*f^{(s)}$ is an excellent section of $I(\bar{\chi}_{(-s)})$.
\end{lem}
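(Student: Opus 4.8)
The plan is to derive everything from a single bookkeeping identity relating $M_{w_0}^*$ to the unnormalized $M_{w_0}$ and the $L$-factors $a_{\mathrm{Id}}=d$ and $a_{w_0}$, and then feed it into Lemma \ref{lem:gamma:poly}. First I would note that, since $\chi$ is unitary and the character $\omega$ of \eqref{omega0} is inverted under conjugation by $w_0$, the intertwining operator $M_{w_0}$ carries $I(\chi_s)$ to $I(\bar\chi_{-s})$, so $M_{w_0}^*f^{(s)}$ makes sense as a meromorphic section of $I(\bar\chi_{-s})$. The key computation is then the identity
$$
M_{w_0}^* f^{(s)} \;=\; \varepsilon_1(s)\,\frac{a_{\mathrm{Id}}(-s,\bar\chi)}{a_{w_0}(s,\chi)}\,M_{w_0}f^{(s)},\qquad \varepsilon_1(s):=\varepsilon\left(s-\tfrac{n-1}{2},\chi,\psi\right)\prod_{r=1}^{\lfloor n/2\rfloor}\varepsilon(2s-n+2r,\chi^2,\psi),
$$
obtained by writing each $\gamma$-factor in the definition of $M_{w_0}^*$ as $\varepsilon(x,\eta,\psi)L(1-x,\bar\eta)/L(x,\eta)$ and matching the numerator $L$-factors with $a_{\mathrm{Id}}(-s,\bar\chi)=d(-s,\bar\chi)$ and the denominator $L$-factors with $a_{w_0}(s,\chi)$ via \eqref{ds}, \eqref{basic:cases} and \eqref{L:id}. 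Since $\mathrm{GL}_1$ $\varepsilon$-factors are bounded on vertical strips, $\varepsilon_1$ is bounded on each $V_{A,B}$.

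Second, I would record that $M_{w_0}^*$ is an involution up to a nonzero scalar. On $I(\chi_s)$ one has $M_{w_0}\circ M_{w_0} = \rho(s)\,\mathrm{id}$ (by irreducibility of the degenerate principal series for generic $s$), and by the Gindikin--Karpelevich reduction to rank one $\rho(s)$ equals $c_{w_0}(s,\chi)c_{w_0}(-s,\bar\chi)=\frac{a_{w_0}(s,\chi)a_{w_0}(-s,\bar\chi)}{a_{\mathrm{Id}}(s,\chi)a_{\mathrm{Id}}(-s,\bar\chi)}$ up to a product of $\mathrm{GL}_1$ $\varepsilon$-factors. Composing the displayed identity (once for $(\chi,s)$, once for $(\bar\chi,-s)$) with this, all $L$-factors cancel and one is left with a product of factors $\varepsilon(x,\eta,\psi)\varepsilon(y,\bar\eta,\psi)$ in which $x+y$ is independent of $s$; such a product is a constant in $s$, so $M_{w_0}^*\circ M_{w_0}^* = c\cdot\mathrm{id}$ for a nonzero $c=c(\chi,\psi,n)$. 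Since scaling by a nonzero constant preserves goodness and excellence, this reduces the lemma to the implication ``$f^{(s)}$ excellent $\Rightarrow M_{w_0}^*f^{(s)}$ excellent'' (applying it to $M_{w_0}^*f^{(s)}$ then yields the converse). It also settles goodness: using Ikeda's description \cite{Ikeda:poles:triple} of a good section as a sum $\alpha^{(s)}+M_{w_0}^*\beta^{(s)}$ with $\alpha^{(s)}$, $\beta^{(s)}$ holomorphic sections of $I(\chi_s)$, $I(\bar\chi_{-s})$ respectively, one gets $M_{w_0}^*f^{(s)} = c\,\beta^{(s)} + M_{w_0}^*\alpha^{(s)}$, again of this form.

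Third, assuming $f^{(s)}$ excellent, I would verify the vertical-strip bounds for $w\in\{\mathrm{Id},w_0\}$ in the definition of excellence of $M_{w_0}^*f^{(s)}$ as a section of $I(\bar\chi_{-s})$. For $w=\mathrm{Id}$, given $P$ with $P(s)a_{\mathrm{Id}}(-s,\bar\chi)$ pole-free on $V_{A,B}$, the displayed identity gives
$$
P(s)\,M_{w_0}^*f^{(s)}(g)\;=\;\varepsilon_1(s)\,\frac{P(s)a_{\mathrm{Id}}(-s,\bar\chi)}{Q(s)a_{w_0}(s,\chi)}\cdot Q(s)\,M_{w_0}f^{(s)}(g)
$$
for any $Q$ with $Q(s)a_{w_0}(s,\chi)$ pole-free on $V_{A,B}$. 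By Lemma \ref{lem:gamma:poly} the middle quotient is bounded by a polynomial on $V_{A,B}$; enlarging $Q$ by a further polynomial factor (harmless, as it only adds zeros) makes it bounded, while $Q(s)M_{w_0}f^{(s)}(g)$ remains bounded on $V_{A,B}$ by excellence of $f^{(s)}$; together with boundedness of $\varepsilon_1$ this gives $|M_{w_0}^*f^{(s)}(g)|_{A,B,P}<\infty$. For $w=w_0$ one first notes $M_{w_0}(M_{w_0}^*f^{(s)}) = \varepsilon_3(s)\,\frac{a_{w_0}(-s,\bar\chi)}{a_{\mathrm{Id}}(s,\chi)}\,f^{(s)}$ for an $\varepsilon$-product $\varepsilon_3$ (the displayed identity followed by $M_{w_0}^2=\rho(s)\,\mathrm{id}$), and then runs the same argument, now using the quotient $\frac{a_{w_0}(-s,\bar\chi)}{a_{\mathrm{Id}}(s,\chi)}$ in Lemma \ref{lem:gamma:poly} together with excellence of $f^{(s)}$ for $w=\mathrm{Id}$. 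This gives the ``only if'' direction, and with the reduction above, the proof.

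The main obstacle is the first step: pinning down the precise relation between $M_{w_0}^*$, $M_{w_0}$ and the $a_w$'s and the resulting involutivity, which means carefully matching the $\gamma$-factor normalization of \eqref{ds} against the $L$-factor definitions, controlling the leftover $\varepsilon$-factors in all of the Archimedean and (ramified) non-Archimedean cases, and invoking the computation of $M_{w_0}^2$. Once that identity is in hand, Lemma \ref{lem:gamma:poly} and the observation that the auxiliary polynomials in the definition of excellence may always be enlarged make the remaining analytic estimates essentially formal.
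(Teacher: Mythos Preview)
Your proposal is correct and follows essentially the same route as the paper: both hinge on writing $M_{w_0}^*$ as an $\varepsilon$-product times $a_{\mathrm{Id}}(-s,\bar\chi)/a_{w_0}(s,\chi)$ times $M_{w_0}$, invoking the involution identity for $M_{w_0}^*$ (which the paper simply cites as \cite[Lemma~1.1]{Ikeda:poles:triple} rather than re-deriving), and then feeding the resulting $a$-ratios into Lemma~\ref{lem:gamma:poly}. The only differences are organizational---the paper quotes Ikeda's \cite[Lemma~1.2]{Ikeda:poles:triple} for goodness preservation instead of using the $\alpha^{(s)}+M_{w_0}^*\beta^{(s)}$ description, and handles both directions in parallel rather than reducing to one via $M_{w_0}^*\circ M_{w_0}^*=c\cdot\mathrm{id}$.
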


\begin{proof}
In the non-Archimedean case good sections are excellent by definition so we can apply  \cite[Lemma 1.2]{Ikeda:poles:triple}.  In the Archimedean case by the same lemma we know that $f^{(s)}$ is good if and only if $M_{w_0}^*f^{(s)}$ is good.  Let $A \leq B$.  To complete the proof in the archimedian case it suffices to check that 
\begin{align*}
\left|f^{(s)}(g)\right|_{A,B,P_{\mathrm{Id},\chi}}\quad \textrm{ and } \quad
\left| M_{w_0}f^{(s)}(g) \right|_{A,B,P_{w_0,\chi}}
\end{align*}
are finite for all $P_{w,\chi}$ such that $P_{w,\chi}a_w(s,\chi)$ is holomorphic in $V_{A,B}$ if and only if 
\begin{align*}
\left| M_{w_0}^*f^{(s)}(g)\right|_{A,B,P_{\mathrm{Id},\bar{\chi}}}\quad \textrm{ and } \quad
\left| M_{w_0}M_{w_0}^*f^{(s)}(g) \right|_{A,B,P_{w_0,\bar{\chi}}}
\end{align*}
are finite for all $P_{w,\bar{\chi}}$  such that $P_{w,\bar{\chi}}(s)a_w(-s,\bar{\chi})$ is holomorphic in $V_{A,B}$.  Here $w \in \{\mathrm{id},w_0\}$.

We note that 
\begin{align*}
& \ \gamma(s-\tfrac{n-1}{2},\chi,\psi) \prod_{r=1}^{\lfloor n/2 \rfloor}\gamma(2s-n+2r,\chi^2,\psi)\\
= & \ \frac{a_{\mathrm{Id}}(-s,\bar{\chi})}{a_{w_0}(s,\chi)}\varepsilon(s-\tfrac{n-1}{2},\chi,\psi)\prod_{r=1}^{\lfloor  n/2 \rfloor}\varepsilon(2s-n+2r,\chi^2,\psi)\,,
\end{align*}
and the $\varepsilon$ function here is a constant times $\chi_{s-\tfrac{n-2}{2}}(a) \prod_{i=1}^{\lfloor n/2 \rfloor} \chi^2_{2s-n+2r-1/2}(a)$ for some $a \in F^\times$ depending on $\psi$ (see \cite[\S 2, \S 16]{JacquetPerfectRS}).  Thus for some nonvanishing holomorphic functions $C_1,C_2$ such that $C_1(s)$ and $C_{2}(s)$ are bounded in $V_{A,B}$ one has 
\begin{align*}
P_{\mathrm{Id},\bar{\chi}}(s)M_{w_0}^*f^{(s)}&=
P_{\mathrm{Id},\bar{\chi}}(s)\gamma(s-\tfrac{n-1}{2},\chi,\psi) \prod_{r=1}^{\lfloor n/2 \rfloor}\gamma(2s-n+2r,\chi^2,\psi)M_{w_0}f^{(s)}\\
&=C_1(s)\frac{P_{\mathrm{Id},\bar{\chi}}(s)a_{\mathrm{Id}}(-s,\bar{\chi})}{P_{w_0,\chi}(s)a_{w_0}(s,\chi)}P_{w_0,\chi}(s)M_{w_0}f^{(s)}\,,
\end{align*}
and, using \cite[Lemma 1.1]{Ikeda:poles:triple}, 
\begin{align*}
P_{w_0,\bar{\chi}}(s)M_{w_0}M_{w_0}^*f^{(s)} &= \frac{\pm P_{w_0,\bar{\chi}}(s)  f^{(s)}}{\gamma(-s-\tfrac{n-1}{2},\bar{\chi},\psi) \prod_{r=1}^{\lfloor n/2 \rfloor}\gamma(-2s-n+2r,\bar{\chi}^2,\psi)}\\
 &=C_2(s)\frac{ P_{w_0,\bar{\chi}}(s)a_{w_0}(-s,\bar{\chi})}{P_{\mathrm{Id},\chi}(s)a_{\mathrm{Id}}(s,\chi)} P_{\mathrm{Id},\chi}(s)f^{(s)}\,.
 \end{align*}
The lemma now follows from Lemma \ref{lem:gamma:poly}.
\end{proof}

\section{The Schwartz space of $X$} \label{sec:Schwartz}

Let $K \leq \mathrm{Sp}_{2n}(F)$ be a maximal compact subgroup.  We now give a definition of a Schwartz space $\mathcal{S}(X(F),K) \subset C^\infty(X(F))$.  Our approach is a combination of Braverman and Kazhdan in \cite{BK:normalized} with L.~Lafforgue's approach to defining Fourier transforms using the Plancherel formula \cite{LafforgueJJM}.  The first author used a similar approach in \cite{GetzBK} to construct Schwartz spaces for Archimedean spherical functions in a different context.  

Our conventions are slightly different than those of Braverman and Kazhdan in that we work with representations induced from a single parabolic as opposed to those from two opposite parabolics.  Our reason for this is that we need the refined information obtained by Ikeda in \cite{Ikeda:poles:triple}.  Apart from this, our construction of the Fourier transform should agree with that of Braverman and Kazhdan and Braverman and Kazhdan's Schwartz space should be contained in ours, at least after normalizing by a power of $|g|$, defined as in \eqref{norm:def}.  We will not check this because it would make the current paper unnecessarily long.  One ought to be able to obtain the precise relationship using the recent preprint of Shahidi \cite{Shahidi:FT} and its appendix by Li \cite{WWLicomparison}.

For a smooth function 
$$
\Phi \in C^\infty(X(F))\,,
$$
set
\begin{align} \label{Phi:int}
\Phi_{\chi_{s}}(g)=\int_{ M^{\mathrm{ab}}(F)}\delta_P(m)^{1/2}\chi_{s}\left( \omega( m) \right)\Phi(m^{-1}g)dm\,.
\end{align}
Here we give $M^{\mathrm{ab}}(F)$ the measure induced by the isomorphism $\omega:M^{\mathrm{ab}}(F) \tilde{\to} F^\times$ and our standard choice of measure on $F^\times$ (see \S \ref{sec:notat}).
When this integral is well-defined, either because it converges absolutely or by analytic continuation in $s$ from a half plane of absolute convergence, it defines an element of the induced representation $I(\chi_s)$ of \eqref{norm:ind}.

\begin{defn}
Assume that $F$ is non-Archimedean.  The \textbf{Schwartz space} $\mathcal{S}(X(F),K)$ consists of right $K$-finite functions $\Phi \in C^\infty(X(F))$ such that for each unitary character $\chi$ and $g \in \mathrm{Sp}_{2n}(F)$ the integral \eqref{Phi:int} defining $\Phi_{\chi_{s}}(g)$ is absolutely convergent for all $s$ with $\mathrm{Re}(s)$ large enough and the map
$$
(g,s) \mapsto \Phi_{\chi_{s}}(g)
$$
is an excellent section.  
\end{defn}

Assume for the moment that $F$ is Archimedean.  
For $\Phi \in C^\infty(X(F))$ let
\begin{align} \label{D}
Df(g):=\frac{\partial}{ \partial z} \Phi(1(e^{z})g)|_{z=0}\,,
\end{align}
and when $F$ is complex,
\begin{align} \label{barD}
\bar{D}f(g):=\frac{\partial}{\partial \bar{z}} \Phi(1(e^{z})g)|_{z=0}\,.
\end{align}
Here $1(x)$ is defined as in \eqref{c:def}.

\begin{defn}
Assume that $F$ is Archimedean.  The 
 \textbf{Schwartz space} $\mathcal{S}(X(F),K)$ consists of right $K$-finite functions $\Phi \in C^\infty(X(F))$ such that for all $B \geq 0$, $B' \geq 0$ and for each unitary character $\chi$ and $g \in \mathrm{Sp}_{2n}(F)$ the integral \eqref{Phi:int} defining $(D^B \bar{D}^{B'}\Phi)_{\chi_{s}}(g)$ is absolutely convergent for all $s$ with $\mathrm{Re}(s)$ large enough and the map
$$
(g,s) \mapsto (D^B \bar{D}^{B'}\Phi)_{\chi_{s}}(g)
$$
is an excellent section.  Here, by convention, $B'=0$ if $F$ is real.
\end{defn}

In the non-Archimedean case the space $\mathcal{S}(X(F),K)$ is independent of the $\mathrm{Sp}_{2n}(F)$-conjugate $K$ of $K_0$, so in this case we are free to take $K:=K_0=\mathrm{Sp}_{2n}(\OO)$ in the proofs.

Our reason for adopting this definition of the Schwartz space is that Ikeda's work essentially tells us how the transforms $\Phi_{\chi_s}$ should behave for $\Phi$ in the Schwartz space, so we use this to reverse engineer the definition of the Schwartz space itself.  To make this precise it is useful to recall some basic facts about Mellin inversion.

Let 
\begin{align*}
I_F:&=\begin{cases}[-\tfrac{\pi}{\log q},\tfrac{\pi}{\log q}] &\textrm{ if }F \textrm{ is non-Archimedean}\\ \RR &\textrm{ if }F \textrm{ is Archimedean\,. } \end{cases}\\
c_F:&=\begin{cases}  \log q &\textrm{ if } F \textrm{ is non-Archimedean}\\
\frac{1}{2} &\textrm{ if }F=\RR\\
\tfrac{1}{2\pi} &\textrm{ if }F=\CC\,.\end{cases}
\end{align*}
Moreover, let $K_{\GG_m} \leq F^\times$ be the maximal compact subgroup.  We abuse notation and denote by $\widehat{K}_{\GG_m}$ a set of representatives for the characters of $F^\times$ modulo equivalence, 
where two characters $\eta,\eta'$ are said to be equivalent if $\eta=|\cdot|^s\eta'$ for some $s \in \CC$.  
Since by our conventions characters are unitary, we can in fact take $s \in i\RR$.
  The set of equivalence classes is in bijection with the set of characters of $K_{\GG_m}$ via restriction.  This explains the notation.

\begin{lem} \label{lem:Mellin:inv} Suppose that for all $\eta \in \widehat{K}_{\mathbb{G}_m}$ the integral defining $\Phi_{\eta_s}$ is absolutely convergent for $\mathrm{Re}(s)=\sigma$.  Suppose moreover that for $k \in K$ one has
\begin{align} \label{abs:conv}
\sum_{\eta \in \widehat{K}_{\GG_m}}  \int_{\sigma+iI_F}|\Phi_{\eta_s}(k)|_{\mathrm{st}}ds<\infty\,.
\end{align}
Then for $(m,k) \in M(F) \times K$ one has
$$
\Phi(mk)=\delta_P(m)^{1/2}\sum_{\eta \in \widehat{K}_{\GG_m}} \int_{\sigma+iI_F}\Phi_{\eta_s}(k)\eta_{s}(\omega(m)) \frac{c_F ds}{2\pi i}\,.
$$
Conversely, suppose that we are given continuous $f(\eta)^{(s)} \in I(\eta_s)$ for all $s$ with $\mathrm{Re}(s)=\sigma$ and all $\eta \in \widehat{K}_{\GG_m}$ and that 
\begin{align} \label{abs:conv2}
\sum_{\eta \in \widehat{K}_{\GG_m}}  \int_{\sigma+iI_F}|f(\eta)^{(s)}(k)|_{\mathrm{st}}ds<\infty\,.
\end{align}
Assume moreover that in the non-Archimedean case $f(\eta)^{s+\frac{2\pi i}{\log q}}=f(\eta)^s$.
Then if we define 
$$
\Phi(mk)=\delta_P(m)^{1/2}\sum_{\eta \in \widehat{K}_{\GG_m}} \int_{\sigma+iI_F}f(\eta)^{(s)}(k)\eta_{s}(\omega(m)) \frac{c_F ds}{2\pi i}\,
$$
and the integral definining $\Phi_{\eta_s}$ is absolutely convergent for all $\eta \in \widehat{K}_{\GG_m}$ and $s$ with $\mathrm{Re}(s) =\sigma$ we have
$$
\Phi_{\eta_s}=f(\eta)^{(s)}.
$$
\end{lem}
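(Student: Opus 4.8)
The plan is to reduce both assertions to ordinary Mellin inversion on $F^\times\cong M^{\mathrm{ab}}(F)$, with $k\in K$ treated as a fixed parameter. First I would observe that, since $\omega$ and $\delta_P$ are characters trivial on $[M,M]$ and $\Phi$ is left $[P,P](F)$-invariant (and $M(F)\to M^{\mathrm{ab}}(F)$ is surjective), for each $k\in K$ the recipe $\phi_k(x):=\delta_P(m_x)^{-1/2}\Phi(m_x k)$, where $m_x\in M(F)$ is any lift of $x\in F^\times$ under $\omega$, gives a well-defined function $\phi_k$ on $F^\times$, and $\Phi(mk)=\delta_P(m)^{1/2}\phi_k(\omega(m))$. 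Since $\delta_P$ restricted to the Levi equals $|\omega(\cdot)|^{n+1}$, the powers of $\delta_P$ cancel when one substitutes into \eqref{Phi:int}, and a change of variables $x\mapsto x^{-1}$ gives $\Phi_{\eta_s}(k)=\int_{F^\times}\eta_s(x)^{-1}\phi_k(x)\,dx^\times$, i.e. $\Phi_{\eta_s}(k)$ is the Mellin transform of $\phi_k$ at $\eta_s$, computed with the measure $dx^\times$ of \S\ref{sec:notat}. I would also note that when $F$ is non-Archimedean $|x|^{2\pi i/\log q}=1$, so $s\mapsto\Phi_{\eta_s}$ is automatically $\tfrac{2\pi i}{\log q}$-periodic and integrals over $\sigma+iI_F$ legitimately descend to the compact Pontryagin dual of the value group $\ZZ$.

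For the direct assertion, the hypothesis that $\Phi_{\eta_s}(k)$ converges absolutely on $\mathrm{Re}(s)=\sigma$ says precisely that $\phi_k\,|\cdot|^{-\sigma}\in L^1(F^\times,dx^\times)$, and \eqref{abs:conv} says that its Fourier transform over the unitary dual $\widehat{F^\times}$, written as a sum over $\eta\in\widehat{K}_{\GG_m}$ of integrals over $\sigma+iI_F$, lies in $L^1$ of the dual. The arithmetic content is that the measure $\sum_{\eta\in\widehat{K}_{\GG_m}}\otimes\tfrac{c_F\,ds}{2\pi i}$ is exactly the Haar measure on $\widehat{F^\times}$ dual to $dx^\times$ — this is the normalization of \cite{Blomer_Brumley_Ramanujan_Annals}, which we adopt. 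Fourier inversion on the locally compact abelian group $F^\times$ then gives $\phi_k(x)=\sum_{\eta}\int_{\sigma+iI_F}\Phi_{\eta_s}(k)\,\eta_s(x)\,\tfrac{c_F\,ds}{2\pi i}$ almost everywhere, and at every point where $\phi_k$ is continuous; multiplying by $\delta_P(m)^{1/2}$ and setting $x=\omega(m)$ yields the stated identity for $\Phi(mk)$.

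For the converse, I would first rewrite the definition in the form $\Phi(g)=\sum_{\eta}\int_{\sigma+iI_F}f(\eta)^{(s)}(g)\,\tfrac{c_F\,ds}{2\pi i}$ for $g\in\mathrm{Sp}_{2n}(F)$, using $f(\eta)^{(s)}(mk)=\delta_P(m)^{1/2}\eta_s(\omega(m))f(\eta)^{(s)}(k)$; by \eqref{abs:conv2} this converges, the non-Archimedean periodicity hypothesis makes the integrand a function on the compact dual of $\ZZ$, and each $f(\eta)^{(s)}$ is left $[P,P](F)$-invariant, so $\Phi$ descends to $X(F)$. Forming $\phi_k$ as above, the definition says exactly that $\phi_k\,|\cdot|^{-\sigma}$ is the inverse Fourier transform over $\widehat{F^\times}$ of the datum $\chi=\eta|\cdot|^{it}\mapsto f(\eta)^{(\sigma+it)}(k)$, which lies in $L^1(\widehat{F^\times})$ by \eqref{abs:conv2}; the hypothesis that the integral defining $\Phi_{\eta_s}$ converges absolutely on $\mathrm{Re}(s)=\sigma$ says exactly that this inverse transform $\phi_k\,|\cdot|^{-\sigma}$ is itself in $L^1(F^\times)$. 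Hence the Fourier inversion theorem (an $L^1$ function on $\widehat{F^\times}$ whose inverse transform is $L^1$ is the Fourier transform of that inverse transform) gives that $\eta_s\mapsto\Phi_{\eta_s}(k)$ equals $\eta_s\mapsto f(\eta)^{(s)}(k)$. Since $\Phi_{\eta_s}$ and $f(\eta)^{(s)}$ both lie in $I(\eta_s)$ and agree on $K$, they coincide.

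Almost everything here is routine once the dictionary is set up; the \emph{main obstacle} is purely one of bookkeeping: checking that, with $dx$ normalized as in \S\ref{sec:notat} (self-dual for $\psi$ in the Archimedean case, $dx(\OO)=|\mathfrak{d}|^{1/2}$ otherwise) and $dx^\times=\zeta(1)\,dx/|x|$, the dual Haar measure on $\widehat{F^\times}$ is precisely $\sum_{\widehat{K}_{\GG_m}}\otimes\tfrac{c_F\,ds}{2\pi i}$ with the listed constants $c_F=\log q,\ \tfrac12,\ \tfrac1{2\pi}$, so that Fourier inversion holds with no extraneous factor. This is exactly the Mellin-inversion package recorded in \cite{Blomer_Brumley_Ramanujan_Annals} that we are invoking. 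Two minor subsidiary points are that the pointwise (not merely almost everywhere) equality in the first assertion requires $\phi_k$ continuous — automatic in the non-Archimedean locally constant case, and available in the applications of the lemma — and that the identification of $\widehat{F^\times}$ with $\{(\eta,s):\mathrm{Re}(s)=\sigma\}$ modulo $\tfrac{2\pi i}{\log q}\ZZ$ in the non-Archimedean case uses the closed interval $I_F$, whose endpoints contribute measure zero.
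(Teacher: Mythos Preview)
Your proposal is correct and follows essentially the same approach as the paper: reduce to Fourier inversion on the locally compact abelian group $F^\times\cong M^{\mathrm{ab}}(F)$ via the identification $\omega$, and invoke the measure normalizations from \cite{Blomer_Brumley_Ramanujan_Annals} to ensure the inversion formula holds without extraneous constants. The paper's proof is a two-sentence pointer to \cite[Theorem~4.32]{FollandAHA} and \cite[(2.2)]{Blomer_Brumley_Ramanujan_Annals}; you have simply unpacked the dictionary (the auxiliary function $\phi_k$, the cancellation of $\delta_P^{1/2}$, the periodicity in the non-Archimedean case) in detail, and your remark about continuity being needed for pointwise rather than almost-everywhere equality is a reasonable caveat that the paper leaves implicit.
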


\begin{proof}
Both statements are versions of Fourier inversion (see \cite[Theorem 4.32]{FollandAHA}, for example), but one must be careful to choose the measures on $F^\times$ and its dual appropriately.  We can deduce the appropriate measures using \cite[(2.2)]{Blomer_Brumley_Ramanujan_Annals}.
\end{proof}

Recall that $w_0=w_{\{1,\dots,n\}} \in \Omega_n$ is the long Weyl element.  
\begin{thm} \label{thm:FT}  Suppose that $\Phi \in \mathcal{S}(X(F),K)$.  Then there is a unique function $\mathcal{F}(\Phi) \in \mathcal{S}(X(F),K)$ such that $\mathcal{F}(\Phi)_{\chi_{s}}=M^*_{w_0}\Phi_{\bar{\chi}_{-s}}$ for all characters $\chi$ and all $s$ with
$\mathrm{Re}(s) \geq 0$.
\end{thm}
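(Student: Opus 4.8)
\emph{Approach.} The plan is to build $\mathcal{F}(\Phi)$ by Mellin inversion, feeding in the Mellin transforms prescribed in the statement, and then to check that the result lies in $\mathcal{S}(X(F),K)$. In the non-Archimedean case we may take $K=K_0$. Choose $\sigma\in\RR$ large enough that the integral \eqref{Phi:int} defining $\Phi_{\bar\eta_{-s}}$ converges absolutely on $\mathrm{Re}(s)=\sigma$ for every $\eta\in\widehat{K}_{\GG_m}$, and set $f(\eta)^{(s)}:=M^*_{w_0}\Phi_{\bar\eta_{-s}}$. Since $(\bar\eta_{-s})^{w_0}=\eta_s$, Lemma \ref{lem:excellent} shows $f(\eta)^{(s)}$ is an excellent section of $I(\eta_s)$; its poles are confined to the poles of $d(s,\eta)$, all of which have real part bounded in terms of $n$, so for $\sigma$ large $f(\eta)^{(s)}$ is holomorphic on the line, and the excellent-section estimate applied with a high-degree polynomial $P$ (admissible because $d(s,\eta)$ has finitely many poles in a bounded strip) shows $|f(\eta)^{(s)}(k)|_{\mathrm{st}}$ is rapidly decreasing there, so each individual integral in \eqref{abs:conv2} is finite. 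Granting the full absolute convergence \eqref{abs:conv2} (and, non-archimedean case, the periodicity of $f(\eta)^{(s)}$ in $s$, which is automatic since everything in sight is a section over $E$), the converse half of Lemma \ref{lem:Mellin:inv} produces $\mathcal{F}(\Phi)\in C^\infty(X(F))$, right $K$-finite, with $\mathcal{F}(\Phi)_{\eta_s}=f(\eta)^{(s)}$ on $\mathrm{Re}(s)=\sigma$; by uniqueness of meromorphic continuation $\mathcal{F}(\Phi)_{\chi_s}=M^*_{w_0}\Phi_{\bar\chi_{-s}}$ then holds for all $s$, in particular for $\mathrm{Re}(s)\geq 0$. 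That $\mathcal{F}(\Phi)_{\chi_s}$ is an excellent section is Lemma \ref{lem:excellent} again; in the Archimedean case the remaining requirement, that $(D^B\bar D^{B'}\mathcal{F}(\Phi))_{\chi_s}$ be excellent for all $B,B'$, follows from the identity $(D^B\bar D^{B'}\Psi)_{\chi_s}=Q_{B,B'}(s)\Psi_{\chi_s}$ for a polynomial $Q_{B,B'}$ of degree $B+B'$ (obtained from \eqref{Phi:int} by translating the integration variable by $1(e^z)$, using that $M^{\mathrm{ab}}$ is abelian, and differentiating at $z=0$, since $\delta_P(1(e^z))^{1/2}\chi_s(\omega(1(e^z)))=e^{z\ell_1(s)+\bar z\ell_2(s)}$ with $\ell_1,\ell_2$ affine-linear in $s$), together with the observation that a polynomial-in-$s$ multiple of an excellent section is excellent: a polynomial multiple of a good section is good, and $|P\cdot M_wf|_{A,B,Q}=|M_wf|_{A,B,PQ}$ with $PQ$ again admissible. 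Uniqueness of $\mathcal{F}(\Phi)$ comes from the forward half of Lemma \ref{lem:Mellin:inv}: two Schwartz functions with the same Mellin transforms on $\mathrm{Re}(s)\geq 0$ have equal transforms as sections, so inversion recovers the same values on the elements $[P,P](F)mk$ ($m\in M(F)$, $k\in K$), which exhaust $X(F)$ by the Iwasawa decomposition.

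\emph{The main obstacle.} The technical heart is the absolute convergence \eqref{abs:conv2} (and the analogous \eqref{abs:conv}): the definition of $\mathcal{S}(X(F),K)$ controls $\Phi_{\chi_s}$ only one $\chi$ at a time, so summability over characters must be extracted. When $F$ is non-Archimedean, or $F=\RR$ where $\widehat{K}_{\GG_m}$ is finite and there is nothing more to do, I would show that $\Phi_{\eta_s}=0$ for all but finitely many $\eta$: the right translates of the $K_0$-finite smooth function $\Phi$ span a finite-dimensional $K_0$-module, so only finitely many $K_0$-types occur among the $\Phi_{\eta_s}$, whereas the $K_0$-types of $I(\eta_s)\vert_{K_0}\cong\mathrm{Ind}_{P(\OO)}^{K_0}(\eta\circ\omega)$ have level growing with the conductor of $\eta$; hence \eqref{abs:conv2} collapses to a finite sum of finite integrals. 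When $F=\CC$ the sum over the $\mu$-exponent $\alpha\in\ZZ$ is genuinely infinite, and here I would use that $(D^B\bar D^{B'}\Phi)_{\eta_s}$ is excellent for all $B,B'$ together with the identity above, whose polynomial $Q_{B,B'}$ has $|Q_{B,B'}(s)|\gg_\sigma|\alpha|^{B+B'}$ on $\mathrm{Re}(s)=\sigma$ for $|\alpha|$ large, to force $\Phi_{\eta_s}(k)$, hence $M^*_{w_0}\Phi_{\bar\eta_{-s}}(k)$, to decay faster than any fixed power of $|\alpha|$; making this decay uniform enough in $\alpha$ for the series to converge is the delicate point, which I would handle by estimating the inversion integral directly in terms of the angular Fourier coefficients of $m\mapsto\Phi(m^{-1}k)$ on $M^{\mathrm{ab}}(\CC)=\CC^\times$.

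\emph{A secondary point.} To invoke the converse half of Lemma \ref{lem:Mellin:inv} for the newly constructed $\mathcal{F}(\Phi)$ one must already know that the integral \eqref{Phi:int} defining $\mathcal{F}(\Phi)_{\eta_s}$ converges absolutely on $\mathrm{Re}(s)=\sigma$, which is mildly circular. I would break the circularity by first reading off a crude growth bound for $\mathcal{F}(\Phi)$ from the inversion formula — polynomial growth as $|g|\to 0$ and rapid decay as $|g|\to\infty$ in the sense of Proposition \ref{prop:XmodK} — which already suffices for that convergence and anticipates the finer estimates established later. Thus, modulo the convergence-over-characters step, the whole argument is formal: its substance is concentrated in \eqref{abs:conv2}, with the case $F=\CC$ being the one that genuinely requires work.
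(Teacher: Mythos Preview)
Your approach is essentially the paper's: build $\mathcal{F}(\Phi)$ by Mellin inversion from the prescribed sections $M^*_{w_0}\Phi_{\bar\eta_{-s}}$ and check membership in the Schwartz space via Lemma \ref{lem:excellent}. Two corrections are worth making.

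First, your ``main obstacle'' for $F=\CC$ is not an obstacle at all: the sum over $\eta\in\widehat{K}_{\GG_m}$ has finite support in \emph{every} case, by exactly the $K$-type argument you give for the non-Archimedean field. Since $\Phi$ is right $K$-finite, each $\Phi_{\eta_s}|_K$ lies in a fixed finite sum of $K$-types; by Frobenius reciprocity a given $K$-type $\tau$ occurs in $I(\eta_s)|_K\cong\mathrm{Ind}_{K\cap P}^K(\eta\circ\omega)$ only when the one-dimensional representation $\eta\circ\omega$ appears in the finite-dimensional restriction $\tau|_{K\cap P}$, and since $\omega:K\cap M\to K_{\GG_m}$ is surjective the map $\eta\mapsto\eta\circ\omega$ is injective, so this happens for only finitely many $\eta$. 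The paper records this in one line (``only finitely many $\eta$ contribute a nonzero summand'') and moves on; your proposed decay-in-$\alpha$ argument is unnecessary.

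Second, Lemma \ref{lem:Mellin:inv} does not assert smoothness of the function it produces; that has to be checked separately in the Archimedean case. The paper argues that smoothness in the $K$-directions comes for free (intertwining operators preserve smoothness of sections) and then differentiates under the integral sign to show $D^B\bar D^{B'}\mathcal{F}(\Phi)$ exists for all $B,B'$, using precisely the excellent-section bound to justify the exchange. Your identity $(D^B\bar D^{B'}\Psi)_{\chi_s}=Q_{B,B'}(s)\Psi_{\chi_s}$ is exactly what is needed here (and is Lemma \ref{lem:DO}).

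One cosmetic difference: the paper takes $\sigma=0$ rather than $\sigma$ large, exploiting that $d(s,\chi)$ already has no poles for $\mathrm{Re}(s)\geq 0$. This slightly streamlines the growth estimate on $\mathcal{F}(\Phi)$ needed to close the loop in your ``secondary point'' (the paper handles it by a contour shift rather than by invoking the later \S\ref{sec:control} estimates), but either choice works.
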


To understand the theorem it is useful to note that 
$$
I((\chi_{s})^{w_0})=I(\bar{\chi}_{-s})\,.
$$

\begin{proof}
Using the Iwasawa decomposition write
$$
g=nmk\,,
$$
where $(n,m,k) \in N(F) \times M(F) \times K$.  We define
\begin{align} \label{FPhi}\begin{split}
\mathcal{F}(\Phi)(g):&=\sum_{\eta \in \widehat{K}_{\GG_m}}\int_{iI_F} M^*_{w_0}\Phi_{\bar{\eta}_{-s}}(g)
\frac{c_F ds}{2 \pi i}\\
&=\sum_{\eta \in \widehat{K}_{\GG_m}}\int_{iI_F} M^*_{w_0}\Phi_{\bar{\eta}_{-s}}(k)\delta_P(m)^{1/2}\eta_{s}(\omega(m))
\frac{c_F ds}{2 \pi i}\,. \end{split}
\end{align}
Note that only finitely many $\eta$ contribute a nonzero summand.  Provided that the integrals here are all absolutely convergent it is also clear that $\mathcal{F}(\Phi)$ is independent of the decomposition of $g$ into $nmk$ and it is right $K$-finite.  

By assumption $\Phi_{\chi_s}$ is an excellent section of $I(\chi_s)$ for all $\chi$, which implies that $\Phi_{\bar{\chi}_{-s}}$ is an excellent section of $I(\bar{\chi}_{-s})$ for all $\chi$ and hence $M_{w_0}^*\Phi_{\bar{\chi}_{-s}}$ is an excellent section of $I(\chi_s)$ by Lemma \ref{lem:excellent}.  

In the non-Archimedean case by definition of an excellent section we have
 $$
 \frac{M^*_{w_0}\Phi_{\bar{\chi}_{-s}}}{d(s,\chi)} \in \CC[q^{-s},q^{s}]\,.
 $$
From the description \eqref{ds} of $d(s,\chi)$ we 
see that $d(s,\chi)$ has no poles for $\mathrm{Re}(s) \geq 0$.  We deduce that each of the integrals in the definition of $\mathcal{F}(\Phi)(g)$ is absolutely convergent, so $\mathcal{F}(\Phi)$ is well-defined in this case.  We also see that \eqref{abs:conv2} holds for $f(\eta)^{(s)}=M^*_{w_0}\Phi_{\bar{\eta}_{-s}}$. 
Moreover we deduce that $\mathcal{F}(\Phi)$ is supported in 
$$
\bigcup_{c >-N} [P,P](F)c(\varpi)K_0\,,
$$
for sufficiently large $N \in \ZZ_{>0}$, and satisfies a bound of the form $|\mathcal{F}(\Phi)(mk)|_{\mathrm{st}} \ll_{\Phi} \delta_P(m)^{1/2}$.
It follows that for any $\sigma>0$ one has
\begin{align*}
&\int_{M^{\mathrm{ab}}(F)}\delta_P^{1/2}(m)|\omega(m)|^\sigma|\mathcal{F}(\Phi)(m^{-1}k)|dm\\
&\ll_{\Phi} \sum_{c>-N} \delta_P(c(\varpi))^{1/2}|\varpi^c|^{\sigma}\delta_P(c(\varpi)^{-1})^{1/2} <\infty\,.
\end{align*}
This implies that for $\sigma>0$  the integral defining $\mathcal{F}(\Phi)_{\chi_s}$ is absolutely convergent for $\mathrm{Re}(s)=\sigma$ and hence the inversion formula $\mathcal{F}(\Phi)_{\chi_s}=M^*_{w_0}\Phi_{\bar{\chi}_{-s}}(k)$ is valid.  Moreover, it is clear that $\mathcal{F}(\Phi) \in \mathcal{S}(X(F),K)$.

Now consider the Archimedean case.  As noted above  $M^*_{w_0}\Phi_{\bar{\chi}_{-s}}$ is an excellent section of $I(\chi_s)$ for all characters $\chi$. Thus for all $A < B$ and $P_{\mathrm{Id}} \in \CC[x]$ such that $P_{\mathrm{Id}}(s)d(s,\chi)$ is holomorphic in $V_{A,B}$  one has
\begin{align}
\left|M_{w_0}^* 
\Phi_{\bar{\chi}_{-s}}(g)\right|_{A,B,P_{\mathrm{Id}}} <\infty\,. \label{esti}
\end{align}
As before, from the description \eqref{ds} of $d(s,\chi)$ we 
see that $d(s,\chi)$ has no poles for $\mathrm{Re}(s) \geq 0$.  Thus we deduce that the integrals in the definition \eqref{FPhi} of $\mathcal{F}(\Phi)$ are absolutely convergent.  We also see that \eqref{abs:conv2} holds for $f(\eta)^{(s)}=M^*_{w_0}\Phi_{\bar{\eta}_{-s}}$. 

We must check that $\mathcal{F}(\Phi)$ is smooth.  In fact, since intertwining operators preserve smoothness and the Mellin transform only involves an integration over $M^{\mathrm{ab}}$ to prove smoothness of $\mathcal{F}(\Phi)$ it suffices to prove that for all $B \geq 0, B' \geq 0$ that the derivative $D^B\bar{D}^{B'}\mathcal{F}(\Phi)$ exists. Differentiating under the integral sign we see that 
\begin{align}
D^B\bar{D}^{B'}\mathcal{F}(\Phi)(g)
=\sum_{\eta \in \widehat{K}_{\GG_m}}\int_{iI_F}P_{\eta}(s) M^*_{w_0}\Phi_{\bar{\eta}_{-s}}(k)\delta_P(m)^{1/2}\eta_{s}(\omega(m))
\frac{c_F ds}{2 \pi i}
\end{align}
for some polynomials $P_{\eta}(s)$.  Since $M_{w_0}^*(\Phi)_{\bar{\eta}_{-s}}$ is an excellent section this integral converges absolutely and we deduce that $D^B\bar{D}^{B'}\mathcal{F}(\Phi)$ exists, hence $\mathcal{F}(\Phi)$ is smooth.  

The final thing that must be checked is that for all $\chi$ the integral defining $(D^B\bar{D}^{B'}\mathcal{F}(\Phi))_{\chi_s}(g)$ absolutely convergent for $\mathrm{Re}(s)$ large enough and is an excellent section.  Indeed, this implies the inversion formula $\mathcal{F}(\Phi)_{\chi_s}=M_{w_0}^*\Phi_{\bar{\chi}_{-s}}$ is valid by Lemma \ref{lem:Mellin:inv}. 

For $N \geq 0$ consider 
\begin{align*}
 &D^B\bar{D}^{B'}\mathcal{F}(\Phi)(mk)\omega\bar{\omega}(m)^{-N}\\
 &=
 \sum_{\eta \in \widehat{K}_{\GG_m}}\int_{iI_F}P_{\eta}(s) M^*_{w_0}\Phi_{\bar{\eta}_{-s}}(k)\delta_P(m)^{1/2}\eta_{s-2N[F:\RR]^{-1}}(\omega(m))
\frac{c_F ds}{2 \pi i}\\
&=\sum_{\eta \in \widehat{K}_{\GG_m}}\int_{iI_F-i2N[F:\RR]^{-1}}P_{\eta}(s+2N[F:\RR]^{-1}) M^*_{w_0}\Phi_{\bar{\eta}_{-s-2[N:\RR]^{-1}}}(k)\delta_P(m)^{1/2}\eta_{s}(\omega(m))
\frac{c_F ds}{2 \pi i}
\end{align*}
where the bar denotes complex conjugation (which is trivial if $F$ is real) and the polynomials $P_\eta$ are defined as above.  We now shift the contour to $iI_F$ to arrive at 
\begin{align*}
\sum_{\eta \in \widehat{K}_{\GG_m}}\int_{iI_F}P_{\eta}(s+2N[F:\RR]^{-1}) M^*_{w_0}\Phi_{\bar{\eta}_{-s-2[N:\RR]^{-1}}}(k)\delta_P(m)^{1/2}\eta_{s}(\omega(m))
\frac{c_F ds}{2 \pi i}
\end{align*}
This shift is permissible by the definition of an excellent section, and since $d(s,\chi)$ has no poles for $\mathrm{Re}(s) \geq 0$ we pass no poles in this process.  Now again by the definition of an excellent section we have that the above is bounded by a constant depending on $N,B,B',\Phi$ but not $m$ since $s \in iI_F$.  Thus
$$
D^B\bar{D}^{B'}\mathcal{F}(\Phi)(mk) \ll_{B,B',N,\Phi} (\omega\bar{\omega})^N(m).
$$
But then
\begin{align*}
&\int_{M^{\mathrm{ab}}(F)} \delta^{1/2}_P(m)|\omega(m)|^{\sigma}|D^BD^{B'}\mathcal{F}(\Phi)(m^{-1}k)|dm
\\&\ll \int_{M^{\mathrm{ab}}(F): |\omega(m)| \leq 1} \delta^{1/2}_P(m)|\omega(m)|^{\sigma}D^BD^{B'}\mathcal{F}(\Phi)(m^{-1}k)dm \\&+
\int_{M^{\mathrm{ab}}(F): |\omega(m)|>1} \delta^{1/2}_P(m)|\omega(m)|^{\sigma}(\omega \bar{\omega})^{-N}(m)dm
\end{align*}
which converges for $N$ and $\sigma$ large enough.  
\end{proof}

The previous theorem provides us with a Fourier transform
\begin{align} \label{FT}
\mathcal{F}:=\mathcal{F}_{\psi,K}:\mathcal{S}(X(F),K) \lto \mathcal{S}(X(F),K)\,.
\end{align}
   The Fourier transform is defined so that for every character $\chi:F^\times \to \CC^\times$ the diagram
\begin{align*}
\begin{CD}
\mathcal{S}(X(F),K) @>{\mathcal{F}}>> \mathcal{S}(X(F),K)\\
@V{\Phi \mapsto \Phi_{\chi_s}}VV @VV{\Phi \mapsto \Phi_{\bar{\chi}_{-s}}}V\\
I(\chi_s) @>{M_{w_0}^*}>> I((\chi_{s})^{w_0})
\end{CD}
\end{align*}
commutes.

For $\Phi \in \mathcal{S}(X(F),K)$ and $g \in \mathrm{Sp}_{2n}(F)$ let
$$
R(g)\Phi(x):=\Phi(xg)\,.
$$
\begin{lem}
For $\Phi \in \mathcal{S}(X(F),K)$ and $g \in \mathrm{Sp}_{2n}(F)$ the function 
$$
R(g)\Phi \in \mathcal{S}(X(F),gKg^{-1})
$$ 
and
$$
\mathcal{F}(R(g)\Phi)=R(g)\mathcal{F}(\Phi)\,, 
$$
or more precisely
$$
\mathcal{F}_{gKg^{-1}}(R(g)\Phi)=R(g)\mathcal{F}_K(\Phi)\,.
$$
\end{lem}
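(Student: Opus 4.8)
The plan is to reduce the statement to the elementary fact that right translation $R(g)$ commutes with each of the three operations used to build the Schwartz space and the Fourier transform: the Mellin transform $\Phi \mapsto \Phi_{\chi_s}$, the intertwining operators $M_w$ (and hence $M^*_{w_0}$), and, in the Archimedean case, the differential operators $D$ and $\bar{D}$.

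First I would record, directly from \eqref{Phi:int},
\begin{align*}
(R(g)\Phi)_{\chi_s}(h) &= \int_{M^{\mathrm{ab}}(F)}\delta_P(m)^{1/2}\chi_s(\omega(m))\Phi(m^{-1}hg)\,dm \\
&= \Phi_{\chi_s}(hg) = R(g)(\Phi_{\chi_s})(h),
\end{align*}
where on the right $R(g)$ denotes the usual right-translation action on $I(\chi_s)$; this action preserves $I(\chi_s)$ and carries right $K$-finite vectors to right $gKg^{-1}$-finite vectors. The same manipulation applied to \eqref{D} and \eqref{barD} shows $D(R(g)\Phi) = R(g)(D\Phi)$ and $\bar{D}(R(g)\Phi) = R(g)(\bar{D}\Phi)$, since $D$ and $\bar{D}$ are built from the \emph{left} action of the one-parameter subgroup $1(e^z)$ and hence commute with right translation. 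Consequently $(D^B\bar{D}^{B'}R(g)\Phi)_{\chi_s}(h) = (D^B\bar{D}^{B'}\Phi)_{\chi_s}(hg)$ for all $B,B'\ge 0$.

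Next I would check that $R(g)$ sends excellent sections of $I(\chi_s)$ (relative to $K$) to excellent sections of $I(\chi_s)$ (relative to $gKg^{-1}$). From the integral formula \eqref{inter} one has $M_w(R(g)f^{(s)}) = R(g)(M_w f^{(s)})$, and $M^*_{w_0}$ commutes with $R(g)$ as well since it differs from $M_{w_0}$ only by a scalar meromorphic function of $s$. Right translation by the fixed element $g$ does not affect holomorphy in $s$, so the quotients $M_w f^{(s)}/a_w(s,\chi)$ remain holomorphic and good sections go to good sections; the only clause in the definitions of holomorphic and meromorphic section that is affected is the finiteness condition, which turns from $K$-finiteness into $gKg^{-1}$-finiteness. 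In the non-Archimedean case good equals excellent and there is nothing more to do. In the Archimedean case the growth bounds persist because the defining inequality for an excellent section is imposed for \emph{every} group element: $|M_w(R(g)f^{(s)})(h)|_{A,B,P_w} = |M_w f^{(s)}(hg)|_{A,B,P_w} < \infty$. Applying this with $f^{(s)} = (D^B\bar{D}^{B'}\Phi)_{\chi_s}$ and combining with the previous paragraph gives $R(g)\Phi \in \mathcal{S}(X(F),gKg^{-1})$; absolute convergence of the Mellin integrals for $\mathrm{Re}(s)$ large is inherited from $\Phi$ because $(R(g)\Phi)_{\chi_s}(h)$ is literally $\Phi_{\chi_s}(hg)$.

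Finally, for the commutation with the Fourier transform I would invoke the uniqueness in Theorem \ref{thm:FT}. By the first part, $R(g)\mathcal{F}_K(\Phi) \in \mathcal{S}(X(F),gKg^{-1})$. On the other hand, for $\mathrm{Re}(s)\ge 0$ the identities above give
\begin{align*}
\bigl(R(g)\mathcal{F}_K(\Phi)\bigr)_{\chi_s} &= R(g)\bigl(\mathcal{F}_K(\Phi)_{\chi_s}\bigr) = R(g)\bigl(M^*_{w_0}\Phi_{\bar{\chi}_{-s}}\bigr) \\
&= M^*_{w_0}R(g)\bigl(\Phi_{\bar{\chi}_{-s}}\bigr) = M^*_{w_0}(R(g)\Phi)_{\bar{\chi}_{-s}},
\end{align*}
so $R(g)\mathcal{F}_K(\Phi)$ has exactly the Mellin transforms that characterize $\mathcal{F}_{gKg^{-1}}(R(g)\Phi)$; uniqueness yields the claimed identity. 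I do not expect a serious obstacle: the only point requiring care is the bookkeeping around the maximal compact subgroup, namely that every clause defining ``excellent section'' and ``Schwartz space'' is either manifestly translation-covariant (the finiteness condition) or translation-invariant and quantified over all group elements (holomorphy and the growth estimates).
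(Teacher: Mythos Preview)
Your argument is correct and follows the same idea as the paper's proof, namely that the Mellin transform $\Phi \mapsto \Phi_{\chi_s}$ and the intertwining operator $M_{w_0}^*$ are $\mathrm{Sp}_{2n}(F)$-equivariant. The paper records this in a single sentence, while you have spelled out the verification in detail, including the Archimedean checks on $D$, $\bar D$, and the growth clauses in the definition of excellent section; these details are implicit in the paper's assertion of equivariance but not written out there.
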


\begin{proof}
The map 
\begin{align*}
\bigcup_{h \in \mathrm{Sp}_{2n}(F)} \mathcal{S}(X(F),hKh^{-1}) &\lto I(\chi_s)\\
\Phi &\longmapsto \Phi_{\chi_s}
\end{align*}
is $G(F)$-equivariant for all $\chi$ and $s$, as is the intertwining map $M_{w_0}^*$.
\end{proof}

\begin{lem}
For $\Phi \in \mathcal{S}(X(F),K)$ one has $\mathcal{F}_{\bar{\psi}} \circ \mathcal{F}_{\psi}(\Phi)(g)=\Phi(g)$.
\end{lem}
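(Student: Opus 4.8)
The plan is to reduce the claim $\mathcal{F}_{\bar\psi}\circ\mathcal{F}_\psi = \mathrm{id}$ to a statement about the normalized intertwining operators $M_{w_0}^*$ via the Mellin transform, using the defining property of $\mathcal{F}$ from Theorem \ref{thm:FT}. First I would observe that, by Theorem \ref{thm:FT}, for every character $\chi$ and every $s$ with $\mathrm{Re}(s)\geq 0$ one has $\mathcal{F}_\psi(\Phi)_{\chi_s}=M^*_{w_0,\psi}\Phi_{\bar\chi_{-s}}$. Applying Theorem \ref{thm:FT} again with $\bar\psi$ in place of $\psi$ and using $\overline{\bar\chi}=\chi$, we get for $\mathrm{Re}(s)\geq 0$
\begin{align*}
\big(\mathcal{F}_{\bar\psi}\mathcal{F}_\psi(\Phi)\big)_{\chi_s} = M^*_{w_0,\bar\psi}\big(\mathcal{F}_\psi(\Phi)_{\bar\chi_{-s}}\big) = M^*_{w_0,\bar\psi}\,M^*_{w_0,\psi}\,\Phi_{\chi_s}\,,
\end{align*}
where in the middle equality one must check that $\mathcal{F}_\psi(\Phi)_{\bar\chi_{-s}}$, evaluated at $-s$ with $\mathrm{Re}(-s)\leq 0$, is still computed by the intertwining-operator formula; this is exactly the content of the inversion statement proved inside Theorem \ref{thm:FT} (that $\mathcal{F}_\psi(\Phi)_{\eta_s}=M^*_{w_0,\psi}\Phi_{\bar\eta_{-s}}$ holds, and extends meromorphically, so that the identity of meromorphic sections holds for all $s$).

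The heart of the matter is then the identity $M^*_{w_0,\bar\psi}\circ M^*_{w_0,\psi}=\mathrm{id}$ on $I(\chi_s)$. Unpacking the normalization, $M^*_{w_0,\psi}=\gamma(s-\tfrac{n-1}{2},\chi,\psi)\prod_{r=1}^{\lfloor n/2\rfloor}\gamma(2s-n+2r,\chi^2,\psi)\,M_{w_0}$, so the composite is
\begin{align*}
M^*_{w_0,\bar\psi}M^*_{w_0,\psi} = \Big[\textstyle\prod \gamma(\cdot,\bar\psi)\Big]\Big[\textstyle\prod\gamma(\cdot,\psi)\Big]\,M_{w_0}M_{w_0}\,,
\end{align*}
where $M_{w_0}:I(\chi_s)\to I((\chi_s)^{w_0})$ and then $M_{w_0}:I((\chi_s)^{w_0})\to I(\chi_s)$ since $w_0$ has order $2$ in $W_{\mathrm{Sp}_{2n}}/W_M$ acting on this line (here $(\chi_s)^{w_0}=\bar\chi_{-s}$ on the relevant torus quotient, and $((\bar\chi_{-s}))^{w_0}=\chi_s$). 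By \cite[Lemma 1.1]{Ikeda:poles:triple} — already invoked in the proof of Lemma \ref{lem:excellent} — one has $M_{w_0}M_{w_0}f^{(s)} = \pm\big(\gamma(-s-\tfrac{n-1}{2},\bar\chi,\psi)\prod_r\gamma(-2s-n+2r,\bar\chi^2,\psi)\big)^{-1} f^{(s)}$ or the analogous product identity relating $M_{w_0}M_{w_0}$ to a product of $\gamma$-factors; combining this with the standard local functional equation $\gamma(s,\chi,\psi)\gamma(1-s,\bar\chi,\bar\psi)=1$ (and $\gamma(s,\chi,\bar\psi)=\chi(-1)\gamma(s,\chi,\psi)$) the product of $\gamma$-factors cancels against the Plancherel-type constant from $M_{w_0}M_{w_0}$, leaving exactly the identity operator, with the sign $\chi(-1)^{\text{(appropriate)}}\cdot(\pm 1)$ working out to $+1$. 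I would do this computation one $\gamma$-factor at a time, matching the factor $\gamma(s-\tfrac{n-1}2,\chi,\psi)$ from $M^*_{w_0,\psi}$ with $\gamma(-s-\tfrac{n-1}2,\bar\chi,\bar\psi)$ — note $(-s-\tfrac{n-1}{2})+(s-\tfrac{n-1}{2})=-(n-1)$, not $1$, so one actually needs the functional equation in the shifted form coming from $a_{w_0}(s,\chi)/a_{\mathrm{Id}}(-s,\bar\chi)$ together with the $\varepsilon$-factor computation already recorded in the proof of Lemma \ref{lem:excellent}; that bookkeeping is the bulk of the work — and similarly for each $\gamma(2s-n+2r,\chi^2,\psi)$ factor.

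Finally, once the identity of Mellin sections $(\mathcal{F}_{\bar\psi}\mathcal{F}_\psi\Phi)_{\chi_s}=\Phi_{\chi_s}$ is established for all unitary $\chi$ and all $s$ on some vertical line of absolute convergence (say $\mathrm{Re}(s)=\sigma$ large), I would invoke Mellin inversion, Lemma \ref{lem:Mellin:inv}: since $\mathcal{F}_{\bar\psi}\mathcal{F}_\psi\Phi$ and $\Phi$ both lie in $\mathcal{S}(X(F),K)$ (by Theorem \ref{thm:FT}, $\mathcal{F}$ maps the Schwartz space to itself) and hence both satisfy the absolute-convergence hypotheses \eqref{abs:conv}, \eqref{abs:conv2}, having the same Mellin transforms forces them to be equal as functions on $X(F)$. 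The main obstacle I anticipate is precisely the $\gamma$-factor bookkeeping in the middle step: tracking the shifts $s\mapsto -s$ through Ikeda's normalization, pinning down the sign $\pm$ in \cite[Lemma 1.1]{Ikeda:poles:triple}, and confirming that the $\varepsilon$-factor monomials $\chi_{s-\frac{n-2}{2}}(a)\prod\chi^2_{2s-n+2r-1/2}(a)$ (with $a$ depending on $\psi$) from the $\psi$ and $\bar\psi$ operators multiply to $1$ — this last point uses that replacing $\psi$ by $\bar\psi$ replaces the relevant constant $a$ by $-a$, so the monomials are genuine inverses of each other up to a sign that is absorbed into $\chi(-1)$, all of which must be cross-checked against \cite[\S 2, \S 16]{JacquetPerfectRS}.
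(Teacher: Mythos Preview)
Your approach is correct and is essentially the same as the paper's: reduce via the Mellin-transform characterization in Theorem~\ref{thm:FT} to the operator identity $M^*_{w_0,\bar\psi}\circ M^*_{w_0,\psi}=\mathrm{Id}$, then invoke \cite[Lemma~1.1]{Ikeda:poles:triple}. The paper's proof is a one-liner that simply cites Ikeda's lemma for this operator identity and leaves the passage back to functions on $X(F)$ (your Mellin-inversion step) implicit, since by Theorem~\ref{thm:FT} an element of $\mathcal{S}(X(F),K)$ is determined by its Mellin transforms.

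The only substantive difference is that you spend most of your effort trying to \emph{re-derive} $M^*_{w_0,\bar\psi}\circ M^*_{w_0,\psi}=\mathrm{Id}$ by unpacking the normalizing $\gamma$-factors and the Plancherel constant for $M_{w_0}M_{w_0}$, and you flag the sign and $\varepsilon$-factor bookkeeping as the main obstacle. That labor is unnecessary: the identity is precisely the content of \cite[Lemma~1.1]{Ikeda:poles:triple} (with the normalization $M^*_{w_0,\psi}$ built exactly so that this holds), so you can simply cite it rather than reproving it. Once you do that, all of your anticipated difficulties disappear.
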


\begin{proof} This follows from the identity $M_{w_0,\bar{\psi}}^* \circ M_{w_0,\psi}^*=\mathrm{Id}$ (see \cite[Lemma 1.1]{Ikeda:poles:triple}).
\end{proof}

Let $C_c^\infty(X(F),K) \subseteq C^\infty_c(X(F))$ denote the subspace of right $K$-finite functions.  Of course, in the non-Archimedean case, every element of $C_c^\infty(X(F))$ is right $K$-finite.  To construct elements in the Schwartz space the following proposition is useful:
\begin{prop} \label{prop:compact}
One has $C_c^\infty(X(F),K) \leq \mathcal{S}(X(F),K)$.
\end{prop}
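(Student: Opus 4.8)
The plan is to dispatch the non-Archimedean case by hand and to reduce the Archimedean case to a separate estimate, carried out in Appendix \ref{App}. Assume first that $F$ is non-Archimedean; since $\mathcal{S}(X(F),K)$ is independent of the chosen $\mathrm{Sp}_{2n}(F)$-conjugate of $K_0$, we may take $K=K_0=\mathrm{Sp}_{2n}(\OO)$. Fix $\Phi\in C_c^\infty(X(F),K)$ with compact support $S$. By Proposition \ref{prop:XmodK} the continuous injection $X(F)/K_0\hookrightarrow\RR_{>0}$, $[P,P](F)gK_0\mapsto|g|$, carries $S$ into $q^{\ZZ}$, so its image is finite; hence $\Phi$ is supported in $\bigcup_{c\in C_0}[P,P](F)c(\varpi)K_0$ for some finite $C_0\subset\ZZ$. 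Since $\delta_P(m)^{1/2}$ is a power of $|\omega(m)|$, and since by \eqref{intertwine} one has $|m^{-1}g|=|\omega(m)|\,|g|$ with $|g|>0$ (because $\mathrm{Pl}(g)\neq0$), the function $m\mapsto\Phi(m^{-1}g)$ on $M^{\mathrm{ab}}(F)\cong F^\times$ is locally constant and supported on the compact set where $|\omega(m)|$ lies in the finite set $\{|g|^{-1}q^{-c}:c\in C_0\}$.

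It follows that for every $s$ the integral \eqref{Phi:int} defining $\Phi_{\chi_s}(g)$ is a finite sum of integrals of locally constant functions over cosets of $\OO^\times$ in $F^\times$, so $\Phi_{\chi_s}(g)$ is a Laurent polynomial in $q^{-s}$; in particular it lies in $E=\CC[q^{-s},q^s]$ and the defining integral converges absolutely for all $s$. The standard change of variables shows $\Phi_{\chi_s}\in I(\chi_s)$, and $\Phi_{\chi_s}$ is right $K$-finite because $\Phi\mapsto\Phi_{\chi_s}$ commutes with right translation. Hence $(g,s)\mapsto\Phi_{\chi_s}(g)$ is a holomorphic section of $I(\chi_s)$ in the sense of \S\ref{sec:normalized}; it is therefore a good section by \cite[Lemma 1.3]{Ikeda:poles:triple}, and in the non-Archimedean case every good section is excellent by definition. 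Thus $\Phi\in\mathcal{S}(X(F),K)$.

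For Archimedean $F$ one begins the same way: $\Phi$ and each $D^B\bar{D}^{B'}\Phi$ are compactly supported, smooth and right $K$-finite, since differentiation along the $M^{\mathrm{ab}}$-direction does not enlarge the support and commutes with right $K$-translation. So the same support estimate shows that $(D^B\bar{D}^{B'}\Phi)_{\chi_s}(g)$ is, for each $g$, a Mellin transform over a compact region of $F^\times$; it is hence entire in $s$, rapidly decreasing in every vertical strip, and right $K$-finite, so it is a holomorphic --- and thus good --- section of $I(\chi_s)$. This already yields the excellence bound for $w=\mathrm{Id}$, since $a_{\mathrm{Id}}(s,\chi)=d(s,\chi)$ by \eqref{basic:cases} and a polynomial times a function rapidly decreasing in a strip stays bounded there. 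The remaining point, which I expect to be the \emph{main obstacle}, is to bound $M_{w_0}(D^B\bar{D}^{B'}\Phi)_{\chi_s}(g)$ by a polynomial times $a_{w_0}(s,\chi)$ on vertical strips. This does not follow formally from holomorphy: $M_{w_0}(D^B\bar{D}^{B'}\Phi)_{\chi_s}$ is essentially the Mellin transform of a Fourier transform rather than of a compactly supported function, and controlling its size requires explicit analysis of the intertwining integral on these sections, in the spirit of Jacquet's Archimedean computations in \cite{JacquetPerfectRS}. I would carry this out in a separate argument, which is precisely the role of Appendix \ref{App}.
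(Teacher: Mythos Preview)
Your proposal is correct and matches the paper's approach: the non-Archimedean case is handled exactly as in the paper (compact support forces $\Phi_{\chi_s}(g)\in\CC[q^{\pm s}]$, hence holomorphic, hence good by \cite[Lemma~1.3]{Ikeda:poles:triple}, hence excellent), and for the Archimedean case you correctly isolate the $w=w_0$ bound as the nontrivial point to be deferred to Appendix~\ref{App}. One small remark: the paper's appendix in fact establishes the stronger statement that $P(s)M_{w_0}\Phi_{\chi_s}(g)$ is bounded in vertical strips for \emph{every} polynomial $P$ (not merely those making $Pa_{w_0}$ holomorphic), by combining Lemma~\ref{lem:DO} with a direct estimate of the intertwining integral via the Pl\"ucker embedding---so your phrasing ``bound by a polynomial times $a_{w_0}(s,\chi)$'' undersells what is actually shown, though it is enough for excellence.
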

\begin{proof}
Assume first that $F$ is non-Archimedean.  Then
for 
$\Phi \in C_c^\infty(X(F),K)$ and every $g \in \mathrm{Sp}_{2n}(F)$
the function 
$s \mapsto \Phi_{\chi_s}(g)$ is in 
$\CC[q^{-s},q^s]$.  Applying \cite[Lemma 1.3]{Ikeda:poles:triple} we deduce that $\Phi_{\chi_s}$ is a good section.  The proposition follows in this case.  

We defer the Archimedean case to Appendix \ref{App}.
\end{proof}

There is no circularity in deferring the Archimedean case to Appendix \ref{App} because this proposition is not used in the remainder of the paper.  Though it is not used, it will be important in applications.  Indeed, without it one does not know that the Archimedean Schwartz space is nonempty.

\section{Analytic control of the Schwartz space} \label{sec:control}

One has good analytic control of elements in the Schwartz space.  We explain this in the non-Archimedean and Archimedean settings in this section.  In applications (and even in the derivation of Theorem \ref{thm:intro}) this analytic control is vital.  To understand what is going on, it is useful to keep in mind the following toy model of the question we are answering: How does one understand a function (and, in the Archimedean case, its derivatives) given knowledge of the Mellin transform of the function?  It is well-known how to do this for functions on $\RR$, and we adapt these arguments to prove the results of this section.

In this section we will make use of the function $|\cdot|:X(F) \to \RR_{>0}$ defined in \eqref{norm:def} using the Pl\"ucker embedding and the character $\omega:P \to \GG_m$ defined as in \eqref{omega0}.

\subsection{The non-Archimedean case} \label{ssec:nonarch}

Assume for this subsection that $F$ is non-Archimedean.  
The following lemma is the analogue of \cite[Conjecture 5.6]{BK:normalized} in our setting:

\begin{lem} \label{lem:bounded}
For $\Phi \in \mathcal{S}(X(F),K_0)$ one has
\begin{align}
|\Phi(g)|_{\mathrm{st}} \ll_{\Phi} |g|^{-(n+1)/2}\,.
\end{align}
The support of $\Phi$ is contained in 
$$
\bigcup_{c>-N}[P,P](F)c(\varpi)K_0
$$
for sufficiently large $N$ (depending on $\Phi$).  

\end{lem}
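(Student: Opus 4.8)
The plan is to recover $\Phi$ from its Mellin transforms $\Phi_{\eta_s}$ via the inversion formula of Lemma \ref{lem:Mellin:inv}, and then to bound each piece using the fact that $\Phi_{\eta_s}$ is an excellent (hence good) section. Since we may take $K = K_0$, the Iwasawa decomposition gives $X(F) = \coprod_{c \in \ZZ}[P,P](F)c(\varpi)K_0$, and by right $K_0$-finiteness (which in the non-Archimedean case just means right $K_0$-invariance after passing to a finite-index subgroup, but actually right $K_0$-invariance already since $\Phi$ is smooth and $K_0$ is compact open — more carefully, $\Phi$ is fixed by an open compact subgroup, and one can reduce to the $K_0$-invariant case), it suffices to bound $|\Phi(c(\varpi))|_{\mathrm{st}}$ in terms of $c$ and to show this vanishes for $c$ very negative. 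Recall $|c(\varpi)| = q^{-c}$ by Proposition \ref{prop:XmodK}, so the target bound $|g|^{-(n+1)/2}$ becomes $q^{c(n+1)/2}$.

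First I would write, using Lemma \ref{lem:Mellin:inv} with $m = c(\varpi)^{-1}$ (so that $\omega(m) = \varpi^{c}$ up to the identification, and $\delta_P(c(\varpi))^{1/2}$ is an explicit power of $q$),
$$
\Phi(c(\varpi)k) = \delta_P(c(\varpi))^{1/2}\sum_{\eta \in \widehat{K}_{\GG_m}} \int_{\sigma + iI_F} \Phi_{\eta_s}(k)\,\eta_s(\varpi)^{-c}\, \frac{c_F\,ds}{2\pi i}
$$
for $\sigma$ in the region of absolute convergence; only finitely many $\eta$ contribute since $\Phi$ is $K_0$-finite. Next I would use that $\Phi_{\eta_s}$ is a good section: by the definition of a good section together with \eqref{basic:cases} (which gives $a_{I_{2n}}(s,\eta) = d(s,\eta)$), the section $\Phi_{\eta_s}/d(s,\eta)$ is an element of $\CC[q^{-s},q^{s}]$, i.e. a finite Laurent polynomial in $q^{-s}$. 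From the explicit shape of $d(s,\eta)$ in \eqref{ds}, a product of local $L$-factors $L(s + \tfrac{n+1}{2},\eta)\prod_r L(2s + n + 1 - 2r, \eta^2)$, one reads off that the poles of $d(s,\eta)$ all lie in the half-plane $\mathrm{Re}(s) \le -\tfrac{n+1}{2}$ (the rightmost pole coming from $L(s+\tfrac{n+1}{2},\eta)$ at $\mathrm{Re}(s) = -\tfrac{n+1}{2}$ when $\eta$ is unramified, and strictly further left otherwise). Hence $\Phi_{\eta_s}$ extends to a function holomorphic for $\mathrm{Re}(s) > -\tfrac{n+1}{2}$, bounded on vertical strips there (being $d(s,\eta)$ times a Laurent polynomial, and $d(s,\eta)$ is bounded on vertical strips avoiding its poles).

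Then the main estimate: shift the contour in the integral above from $\mathrm{Re}(s) = \sigma$ leftward to $\mathrm{Re}(s) = -\tfrac{n+1}{2} + \varepsilon$, crossing no poles. On the new contour, $|\eta_s(\varpi)^{-c}|_{\mathrm{st}} = q^{c(\frac{n+1}{2} - \varepsilon)}$ and $|\Phi_{\eta_s}(k)|_{\mathrm{st}}$ is bounded (uniformly in $s$ on the strip, and the $I_F$-integral is over a compact interval); combined with the explicit power of $q$ coming from $\delta_P(c(\varpi))^{1/2}$, which I expect to contribute the complementary power so that the $\delta_P$ factor and the change of variables match up, one obtains $|\Phi(c(\varpi)k)|_{\mathrm{st}} \ll_\Phi q^{c(n+1)/2} = |c(\varpi)|^{-(n+1)/2}$, which is the claim (taking $\varepsilon \to 0$, or just choosing $\varepsilon$ small and absorbing into the implied constant). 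For the support statement: for $c$ very negative, $q^{c}$ is small, and since each $\Phi_{\eta_s}/d(s,\eta)$ is a \emph{polynomial} in $q^{-s}$ and $q^{s}$ (not an infinite series), one can instead shift the contour arbitrarily far to the right — this forces the integral, hence $\Phi(c(\varpi)k)$, to vanish once $-c$ exceeds the degree of these Laurent polynomials (equivalently, using periodicity $\Phi_{\eta}^{s+2\pi i/\log q} = \Phi_\eta^s$ and residue/Fourier-coefficient vanishing). The main obstacle I anticipate is bookkeeping the $\delta_P$ and measure normalizations so that the exponent comes out exactly $(n+1)/2$ rather than off by the modular character's contribution; this is routine but must be done carefully, using the normalization of $M^{\mathrm{ab}}(F)$-measure fixed just before \eqref{Phi:int} and the formula $\delta_P(\mathrm{diag}(A,A^{-t})) = |\det A|^{n+1}$.
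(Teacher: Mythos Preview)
Your overall strategy---recover $\Phi$ by Mellin inversion (Lemma~\ref{lem:Mellin:inv}) and bound the integrand using that each $\Phi_{\eta_s}$ is a good section---is exactly the paper's approach, and your support argument (only finitely many negative powers of $q^{-s}$ in the Laurent expansion of $\Phi_{\eta_s}$, so the inverse transform vanishes for $c$ very negative) is correct and matches the paper as well.

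There is, however, a concrete error in your pole analysis that breaks the argument as written. You claim the poles of $d(s,\eta)$ all lie in $\mathrm{Re}(s)\le -\tfrac{n+1}{2}$, citing only the factor $L(s+\tfrac{n+1}{2},\eta)$. But the product in \eqref{ds} also contains $L(2s+n+1-2r,\eta^2)$ for $r=1,\dots,\lfloor n/2\rfloor$; taking $r=\lfloor n/2\rfloor$ with $n$ even gives $L(2s+1,\eta^2)$, which (for $\eta^2$ unramified) has its pole at $\mathrm{Re}(s)=-\tfrac12$. So $d(s,\eta)$ is only pole-free for $\mathrm{Re}(s)>-\tfrac12$, and your proposed shift to $\mathrm{Re}(s)=-\tfrac{n+1}{2}+\varepsilon$ would cross poles.

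Fortunately no shift is needed. If you simply take $\sigma=0$ (legitimate by the correct pole-free region), then on that line $|\eta_s(\omega(m))|=1$, the $I_F$-integral is over a compact interval with bounded integrand, and only finitely many $\eta$ contribute. Hence the whole bound is carried by the prefactor:
\[
|\Phi(mk)|_{\mathrm{st}}\ \ll_\Phi\ \delta_P(m)^{1/2}.
\]
Since $|mk|=|\omega(m)|^{-1}$ and $\delta_P(m)=|\omega(m)|^{n+1}$, one has $\delta_P(m)^{1/2}=|mk|^{-(n+1)/2}$, which is the stated estimate---no $\varepsilon$, no ``complementary power'' to balance. This is exactly the paper's computation. (Your sign in $|\eta_s(\varpi)^{-c}|=q^{c(\tfrac{n+1}{2}-\varepsilon)}$ is also off, but that becomes moot once you stay at $\sigma=0$.)
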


\begin{proof}
Let $(m,k) \in M(F) \times K_0$.  We first show that 
$$
|\Phi(mk)|_{\mathrm{st}} \ll_{\Phi} \delta_P(m)^{1/2}\,.
$$
This and the Iwasawa decomposition
 imply the bound in the lemma because $|mk|^{-(n+1)}=\delta_P(m)$.

Since $\Phi_{\chi_s}$ is a good section for all characters $\chi$ and $g \in \mathrm{Sp}_{2n}(F)$ one has that $\frac{\Phi_{\chi_s}(g)}{d(s,\chi)}$ is a polynomial in $q^{-s}$ and $q^{s}$. 
Moreover, using \eqref{ds} we see that $d(s,\chi)$ has no poles for $\mathrm{Re}(s)>-\tfrac{1}{2}$.  It follows that \eqref{abs:conv} is valid for $\sigma=0$, and thus by Lemma \ref{lem:Mellin:inv} one has
\begin{align}
\Phi(mk)&=\sum_{\eta \in \widehat{\OO^\times}}\int_{iI_F} \Phi_{\eta_s}(k)\delta_P(m)^{1/2}\eta_{s}(\omega(m))
\frac{c_F ds }{2 \pi i}\,. \label{tag}
\end{align}  
Since $\Phi$ is left $K_0$-finite the sum on $\eta$ here
has finite support.  In addition the integral is over a compact set so we deduce that 
$|\Phi(mk)| \ll_{\Phi} \delta_P(m)^{1/2}$.

As mentioned above for fixed $g$ the function
$$ 
\frac{\Phi_{\chi_{s}}(g)}{d(s,\chi)} \in \CC[q^{s},q^{-s}]\,.
$$
 Since $d(s,\chi)^{-1}$ is a polynomial in $q^{-s}$ by \eqref{ds} we deduce that if one expands $\Phi_{\chi_{s}}(g)$ as a Laurent series in $q^{-s}$ there are only finitely many terms with negative exponent.  The second claim of the lemma thus follows from the inversion formula \eqref{tag}.
\end{proof}

Assume that $F$ is non-Archimedean.  The \textbf{basic function} on $X(F)$ is 
\begin{align} \label{basic}
b:=\sum_{a=0}^\infty 
\sum_{b_1=0}^\infty\cdots \sum_{b_{\lfloor n/2 \rfloor}=0}^\infty q^{2b_1+4b_2+\dots+2\lfloor n/2 \rfloor b_{\lfloor n/2 \rfloor} }\one_{a+2b_1+\dots+2b_{\lfloor n/2 \rfloor }}\,.
\end{align}

\begin{lem}  \label{lem:basic:comp} The function $b$ is the unique right $K_0$-invariant function on $\mathrm{Sp}_{2n}(F)$ such that 
\begin{align*}
b_{\chi_{s}}=d(s,\chi)(\one_0)_{\chi_{s}}\,,
\end{align*}
for unramified characters $\chi$ and all $s$ sufficiently large.
Here $\one_c$ is defined as in \eqref{1c}.
\end{lem}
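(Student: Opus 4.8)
The plan is to reduce the claimed identity to a comparison of both sides evaluated at $I_{2n}$, to compute each side there as a geometric series in $q^{-s}$, and then to extract uniqueness from that expansion.

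First observe that $b=\sum_{c\in\ZZ}b(c)\one_c$, where $b(c):=\sum_{a+2b_1+\dots+2b_{\lfloor n/2\rfloor}=c}q^{2b_1+4b_2+\dots+2\lfloor n/2\rfloor b_{\lfloor n/2\rfloor}}$ is a finite sum for each $c$ and vanishes for $c<0$; thus $b$ is a bona fide right $K_0$-invariant function on $X(F)$, supported on $\bigcup_{c\ge 0}[P,P](F)c(\varpi)K_0$. Fix an unramified character $\chi$ and let $v$ denote the normalized valuation of $F$, so $|t|=q^{-v(t)}$. For $\Phi\in\{b,\one_0\}$ the integral \eqref{Phi:int} defining $\Phi_{\chi_s}$ converges absolutely for $\mathrm{Re}(s)$ sufficiently large (for all $s$ if $\Phi=\one_0$, by compact support), so $\Phi_{\chi_s}\in I(\chi_s)$ and it is right $K_0$-invariant. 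For unramified $\chi$, the Iwasawa decomposition together with the triviality of $\delta_P$ on $P(\OO)$ and of $\chi_s$ on $\omega(P(\OO))=\OO^\times$ shows that any $f\in I(\chi_s)^{K_0}$ is determined by $f(I_{2n})$ via $f(pk)=\chi_s(\omega(p))\delta_P(p)^{1/2}f(I_{2n})$; hence $b_{\chi_s}$ and $d(s,\chi)(\one_0)_{\chi_s}$, being right $K_0$-invariant elements of $I(\chi_s)$, coincide as soon as they agree at $I_{2n}$. So it suffices to prove the numerical identity $b_{\chi_s}(I_{2n})=d(s,\chi)\,(\one_0)_{\chi_s}(I_{2n})$.

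Parametrize $M^{\mathrm{ab}}(F)$ by $F^\times$ via $\omega$, writing $m(t)$ for the element with $\omega(m(t))=t$; then the measure on $M^{\mathrm{ab}}(F)$ is our standard measure on $F^\times$ and $\delta_P(m(t))=|\omega(m(t))|^{n+1}=|t|^{n+1}$. The key bookkeeping step is the cell identity
\begin{align*}
\one_c\big(m(t)^{-1}\big)=1\quad\Longleftrightarrow\quad v(t)=c\,.
\end{align*}
To see this, note $m(t)^{-1}\in[P,P](F)c(\varpi)K_0$ iff $c(\varpi)^{-1}m(t)^{-1}\in[P,P](F)K_0$ (since $c(\varpi)$ normalizes $[P,P]$), and an element of $M(F)$ lies in $[P,P](F)K_0$ precisely when its $\omega$-value lies in $\OO^\times$ — this last point reduces to the identity $\SL_n(F)\GL_n(\OO)=\{A\in\GL_n(F):\det A\in\OO^\times\}$ together with $\omega(P(\OO))=\OO^\times$. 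Since $\omega\big(c(\varpi)^{-1}m(t)^{-1}\big)=\varpi^c t^{-1}$, this condition reads $v(t)=c$. Granting the cell identity, for $\Phi\in\{b,\one_0\}$, unramified $\chi$, $V:=\int_{\OO^\times}dt^\times$, and $z:=\chi(\varpi)q^{-s-(n+1)/2}$ one gets
\begin{align*}
\Phi_{\chi_s}(I_{2n})=\sum_{c\in\ZZ}\Phi(c)\int_{v(t)=c}\delta_P(m(t))^{1/2}\chi_s(t)\,dt^\times=V\sum_{c\in\ZZ}\Phi(c)\,z^c\,,
\end{align*}
using $\delta_P(m(t))^{1/2}=q^{-c(n+1)/2}$ and $\chi_s(t)=(\chi(\varpi)q^{-s})^c$ on $\{v(t)=c\}$. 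For $\Phi=\one_0$ this equals $V$, and for $\Phi=b$ it factors into geometric series:
\begin{align*}
V\Big(\sum_{a\ge 0}z^a\Big)\prod_{r=1}^{\lfloor n/2\rfloor}\Big(\sum_{b_r\ge 0}(q^{2r}z^2)^{b_r}\Big)&=\frac{V}{1-z}\prod_{r=1}^{\lfloor n/2\rfloor}\frac{1}{1-q^{2r}z^2}\\
&=V\,L\big(s+\tfrac{n+1}{2},\chi\big)\prod_{r=1}^{\lfloor n/2\rfloor}L\big(2s+n+1-2r,\chi^2\big)\,,
\end{align*}
which is exactly $V\,d(s,\chi)$ by \eqref{ds}. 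This yields the existence part: $b_{\chi_s}(I_{2n})=d(s,\chi)\,V=d(s,\chi)\,(\one_0)_{\chi_s}(I_{2n})$.

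For uniqueness, suppose $\tilde b=\sum_c\tilde b(c)\one_c$ is right $K_0$-invariant with $\tilde b_{\chi_s}=d(s,\chi)(\one_0)_{\chi_s}$ for unramified $\chi$ and $\mathrm{Re}(s)$ large. Taking $\chi=1$ in the displayed formula gives $V\sum_c\tilde b(c)q^{-c(n+1)/2}x^c=V\,d(s,1)$ as absolutely convergent series in $x=q^{-s}$ for $\mathrm{Re}(s)$ large, and since $b$ satisfies the same identity, comparing the coefficients of these expansions forces $\tilde b(c)=b(c)$ for every $c$ (in particular $\tilde b(c)=0$ for $c<0$), so $\tilde b=b$. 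I expect the main obstacle to be the cell identity together with correctly pinning down the normalizations of $\delta_P$ and of the Haar measure on $M^{\mathrm{ab}}(F)\cong F^\times$; once these are in hand the rest is the routine geometric-series manipulation above, and the volume constant $V$ is harmless because it occurs on both sides of the asserted equation.
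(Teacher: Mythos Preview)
Your proof is correct and follows essentially the same approach as the paper. The paper computes $(\one_c)_{\chi_s}(g)$ for arbitrary $g$ by the change of variables $m\mapsto c(\varpi)^{-1}m$, obtaining $(\one_c)_{\chi_s}=\chi_{s+(n+1)/2}(\varpi^c)(\one_0)_{\chi_s}$ directly as sections, and then sums the resulting geometric series; you instead first reduce to $g=I_{2n}$ via the one-dimensionality of $I(\chi_s)^{K_0}$ for unramified $\chi$, establish the cell identity (which is exactly the change of variables restricted to $I_{2n}$), and then sum the same geometric series. For uniqueness both arguments invoke Mellin inversion, which in this discrete non-Archimedean setting is precisely your coefficient comparison.
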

\begin{proof}
One has
\begin{align*}
(\one_{c})_{\chi_s}(g):&=
\int_{M^{\mathrm{ab}}(F)} 
\delta_P(m)^{1/2}\chi_s(\omega(m))\one_c(m^{-1}g)dm\\
&=\int_{M^{\mathrm{ab}}(F)} \delta_P(c(\varpi)^{-1}m)^{1/2}\chi_s(\omega(c(\varpi)^{-1}m))\one_c(c(\varpi)m^{-1}g)dm\\
&=\delta_P(c(\varpi)^{-1})^{1/2}\chi_s^{-1}(\omega(c(\varpi))) \int_{M^{\mathrm{ab}}(F)} \delta_P(m)^{1/2}\chi_s(\omega(m))\one_0(m^{-1} g)dm\\
&=\delta_P(c(\varpi)^{-1})^{1/2}\chi_s(\varpi^c) \int_{M^{\mathrm{ab}}(F)} \delta_P(m)^{1/2}\chi_s(\omega(m))\one_0(m^{-1}g)dm\\
&=\delta_P(c(\varpi)^{-1})^{1/2}\chi_s(\varpi^c) (\one_0)_{\chi_s}(g)\,.
\end{align*}
Here we have used the fact that $\omega(c(x))=x^{-c}$.  
On the other hand $\delta_P(c(\varpi)^{-1})=|\varpi^{c(n+1)}|=q^{-c(n+1)}$, 
so the above is 
\begin{align*}
\chi_{s+(n+1)/2}(\varpi^c) (\one_0)_{\chi_s}(g)\,.
\end{align*}
From this computation one deduces that 
$b_{\chi_s}=d(s,\chi)(\one_0)_{\chi_s}$ as claimed.  The fact that 
$b$ is the unique function with this property follows from 
Mellin inversion (see the proof of Lemma \ref{lem:bounded}).
\end{proof}

\begin{lem} \label{lem:b:bound} The function $b$ is in $\mathcal{S}(X(F),K_0)$.  For $|\gamma|>1$ one has $b(\gamma)=0$.  If $\varepsilon>0$ and $|\gamma|\leq 1$ then for $q$ large enough in a sense depending on $\varepsilon$ one has $
|b(\gamma)| \leq |\gamma|^{-(n+1)/2-\varepsilon}$.  
\end{lem}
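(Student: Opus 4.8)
The plan is to establish the three assertions in turn, with the Mellin computation of Lemma~\ref{lem:basic:comp} as the main input.

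\textbf{Membership in $\mathcal{S}(X(F),K_0)$.} First I would check that $b$ is a well-defined element of $C^\infty(X(F))$: since $X(F)=\coprod_{c\in\ZZ}[P,P](F)c(\varpi)K_0$ and $|\cdot|$ is constant (equal to $q^{-c}$) on each piece, $|\cdot|$ is locally constant, so near any point only the single $\one_c$ with $q^{-c}=|\gamma|$ contributes to \eqref{basic}, and the coefficient of that $\one_c$ is a \emph{finite} sum. Since $b$ is right $K_0$-invariant, so is each section $b_{\chi_s}$, hence $b_{\chi_s}\in I(\chi_s)^{K_0}$. If $\chi$ is ramified, then $I(\chi_s)^{K_0}=0$, so $b_{\chi_s}=0$ is vacuously a good (hence excellent) section. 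If $\chi$ is unramified, then $I(\chi_s)^{K_0}=\CC\,\phi^0_{\chi_s}$ for the normalized spherical vector $\phi^0_{\chi_s}$ (taking the value $1$ on $K_0$), and evaluating Lemma~\ref{lem:basic:comp} at $I_{2n}$ gives
\[
b_{\chi_s}=d(s,\chi)\,(\one_0)_{\chi_s}(I_{2n})\,\phi^0_{\chi_s}\,,
\]
where $(\one_0)_{\chi_s}(I_{2n})\in\CC[q^{-s},q^{s}]$ because $\one_0\in C_c^\infty(X(F),K_0)$. Using that $M_w$ sends $\phi^0_{\chi_s}$ to $c_w(s,\chi)\phi^0_{(\chi_s)^w}=\tfrac{a_w(s,\chi)}{d(s,\chi)}\phi^0_{(\chi_s)^w}$ for $w\in\Omega_n$, one gets $M_wb_{\chi_s}/a_w(s,\chi)=(\one_0)_{\chi_s}(I_{2n})\,\phi^0_{(\chi_s)^w}$, which is holomorphic in $s$; hence $b_{\chi_s}$ is good, i.e.\ excellent. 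Finally the integral \eqref{Phi:int} defining $b_{\chi_s}$ converges absolutely for $\mathrm{Re}(s)$ large, since on the coset indexed by $c\geq0$ the value of $b$ is $\ll_n c^{\lfloor n/2\rfloor}q^{\lfloor n/2\rfloor c}$ (by the estimate below) while the measure factor contributes decay $q^{-c((n+1)/2+\mathrm{Re}(s))}$, so the sum over $c$ converges once $\mathrm{Re}(s)$ is large. Thus $b\in\mathcal{S}(X(F),K_0)$.

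\textbf{Vanishing for $|\gamma|>1$.} Every index $a+2b_1+\cdots+2b_{\lfloor n/2\rfloor}$ appearing in \eqref{basic} is $\geq0$, so $b$ is supported on $\bigcup_{c\geq0}[P,P](F)c(\varpi)K_0$; since $|c(\varpi)|=q^{-c}$ and $|\cdot|$ takes values in $q^{\ZZ}$, this set lies in $\{\gamma:|\gamma|\leq1\}$, whence $b(\gamma)=0$ when $|\gamma|>1$.

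\textbf{The growth estimate.} Fix $\gamma$ with $|\gamma|\leq1$, write $|\gamma|=q^{-c}$ with $c\in\ZZ_{\geq0}$, and put $m:=\lfloor n/2\rfloor$. From \eqref{basic} and the coset decomposition,
\[
b(\gamma)=\sum_{\substack{a,b_1,\dots,b_m\geq0\\ a+2(b_1+\cdots+b_m)=c}} q^{2(b_1+2b_2+\cdots+mb_m)}\,.
\]
Each exponent satisfies $b_1+2b_2+\cdots+mb_m\leq m(b_1+\cdots+b_m)\leq mc/2$, so every summand is $\leq q^{mc}$, and the number of summands is $\binom{\lfloor c/2\rfloor+m}{m}\leq(m+\tfrac12)^m c^m$ for $c\geq1$ (and $=1$ for $c=0$). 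Hence $b(\gamma)\leq(m+\tfrac12)^m c^m q^{mc}$ for $c\geq1$. Since $m+\tfrac12\leq\tfrac{n+1}{2}$, it then suffices to take $q$ large, in terms of $n$ and $\varepsilon$, so that $(m+\tfrac12)^m c^m\leq q^{(\frac12+\varepsilon)c}$ for all $c\geq1$; such a threshold $q_0(n,\varepsilon)$ exists because the right-hand side grows exponentially in $c$ while the left-hand side is polynomial, so the inequality need only be checked on a bounded range of $c$ (a short computation shows it holds once $q\geq\max\big(e^{2m/(1+2\varepsilon)},(m+\tfrac12)^{2m/(1+2\varepsilon)}\big)$). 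For such $q$ one gets, for all $c\geq0$,
\[
b(\gamma)\leq q^{mc+(\frac12+\varepsilon)c}\leq q^{(\frac{n+1}{2}+\varepsilon)c}=|\gamma|^{-(n+1)/2-\varepsilon}\,.
\]

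The hard part is really only the first step: showing that the infinite series \eqref{basic} produces a genuine Schwartz function, i.e.\ that multiplying the good section $(\one_0)_{\chi_s}$ by the pole-bearing factor $d(s,\chi)$ again gives a good section. This works precisely because $(\one_0)_{\chi_s}$ is a scalar multiple of the spherical vector, on which each $M_w$ acts by $c_w(s,\chi)=a_w(s,\chi)/d(s,\chi)$, so $d(s,\chi)$ cancels the denominators obstructing goodness; the remaining steps are elementary combinatorics.
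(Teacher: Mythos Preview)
Your proof is correct. The first two claims are handled essentially as in the paper (which simply cites Ikeda for the membership claim, where you unpack the spherical computation $M_w(d(s,\chi)\phi^0_{\chi_s})=a_w(s,\chi)\phi^0_{(\chi_s)^w}$ explicitly). The growth estimate, however, is proved differently. The paper argues by Mellin inversion: from Lemma~\ref{lem:basic:comp} and a contour shift to $\mathrm{Re}(s)=\varepsilon$ one gets
\[
|b(mk)|\leq |d(\varepsilon,1)|_{\mathrm{st}}\,\delta_P(m)^{1/2}|\omega(m)|^\varepsilon,
\]
and then invokes that $d(s,1)$ is a local factor of an Euler product absolutely convergent at $s=\varepsilon$, so $|d(\varepsilon,1)|_{\mathrm{st}}\leq 1$ for $q$ large. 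Your approach instead bounds the explicit series \eqref{basic} directly by elementary combinatorics, using that the maximal exponent is $\lfloor n/2\rfloor c$ and the number of terms is $O_n(c^{\lfloor n/2\rfloor})$, together with $\tfrac{n+1}{2}-\lfloor n/2\rfloor\geq\tfrac12$. Your route is more self-contained and yields an explicit threshold for $q$; the paper's route is shorter and illustrates the general philosophy (control of Schwartz functions via their Mellin transforms) that drives the rest of \S\ref{sec:control}.
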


\begin{proof}
To prove that $b \in \mathcal{S}(X(F),K)$ we must show that $b_{\chi_s}$ is a good section of $I(\chi_s)$ for all characters $\chi:F^\times \to \CC^\times$.  This is stated right above \cite[Lemma 1.2]{Ikeda:poles:triple}.

For the last assertion we note that by Mellin inversion
\begin{align*}
b(mk)&=\int_{iI_F+\varepsilon} d(s,\chi)(\one_0)_{1_s}(k)\delta_P(m)^{1/2}\eta_{s}(\omega(m))
\frac{c_F ds }{2 \pi i}\\
&=\int_{iI_F+\varepsilon} d(s,\chi)\delta_P(m)^{1/2}\eta_{s}(\omega(m))
\frac{c_F ds }{2 \pi i}\,.
\end{align*}
This in turn is bounded in absolute value by $|d(\varepsilon,1)|_{\mathrm{st}}\delta_P(m)^{1/2}|\omega(m)|^{\varepsilon}$.  The function $d(s,1)$ is a local factor of a product of global $L$-functions that converge absolutely at $s=\varepsilon$, so for $q$ sufficiently large $|d(\varepsilon,1)|_{\mathrm{st}} \leq 1$.
\end{proof}

\begin{lem} \label{lem:basic:fixed}  Assume that $\psi$ is unramified.  Then one has $\mathcal{F}(b)=b$.
\end{lem}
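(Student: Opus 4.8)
The plan is to verify the defining property of $\mathcal{F}$ on the Mellin side, using the characterization of $b$ from Lemma~\ref{lem:basic:comp}. Since $\psi$ is unramified and $b$ is right $K_0$-invariant, it suffices to show that $\mathcal{F}(b)_{\chi_s}=b_{\chi_s}$ for every character $\chi$ (then apply Mellin inversion, exactly as in the uniqueness argument of Lemma~\ref{lem:basic:comp}). By Theorem~\ref{thm:FT} we have $\mathcal{F}(b)_{\chi_s}=M^*_{w_0}b_{\bar{\chi}_{-s}}$. For $\chi$ \emph{ramified}, the point $\one_0$ has $(\one_0)_{\chi_s}=0$ (the integral over $M^{\mathrm{ab}}(F)$ picks out only the $K_{\GG_m}$-invariant part), so $b_{\chi_s}=d(s,\chi)(\one_0)_{\chi_s}=0$ and both sides vanish trivially. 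So the real content is the unramified case.

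For $\chi$ unramified, $b_{\bar{\chi}_{-s}}=d(-s,\bar{\chi})(\one_0)_{\bar{\chi}_{-s}}$ is $d(-s,\bar\chi)$ times the normalized spherical section of $I(\bar\chi_{-s})$. By the defining property of the $L$-factors recalled below \cite[(1.2.7)]{Ikeda:poles:triple} (quoted in the excerpt right after \eqref{basic:cases}), $M_{w_0}$ sends the spherical vector of $I(\bar\chi_{-s})$ to $c_{w_0}(-s,\bar\chi)=a_{w_0}(-s,\bar\chi)/d(-s,\bar\chi)$ times the spherical vector of $I((\bar\chi_{-s})^{w_0})=I(\chi_s)$. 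Hence
\begin{align*}
M_{w_0}b_{\bar{\chi}_{-s}} = d(-s,\bar\chi)\,\frac{a_{w_0}(-s,\bar\chi)}{d(-s,\bar\chi)}\,(\text{spherical vector of }I(\chi_s)) = a_{w_0}(-s,\bar\chi)\,(\one_0)_{\chi_s}^{\mathrm{sph}}\,,
\end{align*}
where I write $(\one_0)_{\chi_s}^{\mathrm{sph}}$ for the normalized spherical section of $I(\chi_s)$, which equals $(\one_0)_{\chi_s}/(\one_0)_{\chi_s}(1)$; tracking this normalization constant is routine. Now apply the $\gamma$-factors in the definition of $M^*_{w_0}$. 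Since $\psi$ is unramified, each $\varepsilon$-factor appearing in the identity
$$
\gamma(s-\tfrac{n-1}{2},\chi,\psi)\prod_{r=1}^{\lfloor n/2\rfloor}\gamma(2s-n+2r,\chi^2,\psi)=\frac{a_{\mathrm{Id}}(-s,\bar\chi)}{a_{w_0}(s,\chi)}\cdot\prod\varepsilon(\cdots)
$$
(used already in the proof of Lemma~\ref{lem:excellent}) is equal to $1$, so the product of $\gamma$-factors equals $a_{\mathrm{Id}}(-s,\bar\chi)/a_{w_0}(s,\chi)$. Combining, and using $a_{\mathrm{Id}}(-s,\bar\chi)=d(-s,\bar\chi)$ from \eqref{basic:cases} together with the functional-equation relation between $a_{w_0}(-s,\bar\chi)$ and $a_{w_0}(s,\chi)$, the scalars collapse to exactly $d(s,\chi)$, giving $\mathcal{F}(b)_{\chi_s}=d(s,\chi)(\one_0)_{\chi_s}=b_{\chi_s}$ as desired.

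The main obstacle is bookkeeping rather than conceptual: one must carefully match Ikeda's normalization of the spherical intertwining image against the definition of $M^*_{w_0}$, keep the measure normalization on $M^{\mathrm{ab}}(F)$ consistent so that $(\one_0)_{\chi_s}(1)$ is the right constant, and confirm via \eqref{ds}–\eqref{basic:cases} and \eqref{L:id} that the $L$- and $\varepsilon$-factors telescope to $d(s,\chi)$ with no stray powers of $q$. A clean way to organize this is to note that it suffices to check the identity for $\chi$ unramified and $\mathrm{Re}(s)$ large, where everything converges and the spherical computation is literal, and then invoke meromorphic continuation; the $\psi$-unramified hypothesis is precisely what kills the $\varepsilon$-factors and makes the constant come out to $1$ rather than a power of $q$ depending on the conductor of $\psi$. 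Once $\mathcal{F}(b)_{\chi_s}=b_{\chi_s}$ for all $\chi$, Lemma~\ref{lem:Mellin:inv} (Mellin inversion) forces $\mathcal{F}(b)=b$ since both are right $K_0$-invariant functions on $X(F)$.
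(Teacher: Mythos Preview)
Your approach is essentially the same as the paper's: compute $M_{w_0}^*$ on the spherical section using the Gindikin--Karpelevich formula (the spherical value of $M_{w_0}$) and the explicit $\gamma$-factor product, then invoke Mellin inversion. The paper does the computation with $(\chi,s)$ rather than $(\bar\chi,-s)$, obtaining $M_{w_0}^* b_{\chi_s} = d(-s,\bar\chi)(\one_0)_{\bar\chi_{-s}} = b_{\bar\chi_{-s}}$ directly, which is equivalent after relabeling.

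There is, however, a bookkeeping slip in your final paragraph that you should fix. When you apply $M_{w_0}^*$ to a section in $I(\bar\chi_{-s})$, the $\gamma$-factors in its definition are evaluated at $(\bar\chi,-s)$, not at $(\chi,s)$. If you substitute $(\chi,s)\mapsto(\bar\chi,-s)$ in the identity you quote from the proof of Lemma~\ref{lem:excellent}, the scalar becomes $a_{\mathrm{Id}}(s,\chi)/a_{w_0}(-s,\bar\chi)=d(s,\chi)/a_{w_0}(-s,\bar\chi)$, and this cancels cleanly against the $a_{w_0}(-s,\bar\chi)$ you already produced from $M_{w_0}$, yielding $d(s,\chi)$ on the nose. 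No ``functional-equation relation between $a_{w_0}(-s,\bar\chi)$ and $a_{w_0}(s,\chi)$'' is needed or available in any clean form; invoking one is a red flag that the parameters got swapped. Once you make that substitution correctly, the proof is complete and matches the paper's.
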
 

\begin{proof}
Essentially by definition (see below \cite[(1.2.7)]{Ikeda:poles:triple}) one has
\begin{align*}
M_{w_0}b_{\chi_{s}}&=c_{w_0}(s,\chi)d(s,\chi)(\one_{0})_{\bar{\chi}_{-s}}\\
&=a_{w_0}(s,\chi)(\one_{0})_{\bar{\chi}_{-s}}\,.
\end{align*}
We note in particular that $b_{\chi_s}$ vanishes unless $\chi$ is unramified.  Now
\begin{align*}
\gamma\left(s-\tfrac{n-1}{2},\chi,\psi\right)\prod_{r=1}^{\lfloor n/2 \rfloor}\gamma(2s-n+2r,\chi^2,\psi)
&=\frac{L\left(\tfrac{n+1}{2}-s,\bar{\chi}\right)}{L\left(s-\tfrac{n-1}{2},\chi\right)}\prod_{r=1}^{\lfloor n/2 \rfloor}\frac{L(1+n-2r-2s,\overline{\chi}^2)}{L(2s-n+2r,\chi^2)}\\
&=\frac{d(-s,\bar{\chi})}{a_{w_0}(s,\chi)}\,,
\end{align*}
so 
$$
M_{w_0}^*b_{\chi_{s}}=d(-s,\bar{\chi})(\one_{0})_{\bar{\chi}_{-s}}\,.
$$
The lemma follows.
\end{proof}

\subsection{The Archimedean case}
\label{ssec:arch}

In this subsection we assume $F$ is Archimedean.
The usual Schwartz space $\mathcal{S}(\RR)$ of $\RR$ enjoys the following properties:
\begin{enumerate}
\item It is closed under multiplication by polynomials.
\item Its elements are bounded.
\end{enumerate}
One often says loosely that functions in $\mathcal{S}(\RR)$ are rapidly decreasing at infinity, which follows upon combining 
these two assertions.  We prove the analogues of (1-2) for functions in $\mathcal{S}(X(F),K)$ in this section.  

Using the Iwasawa decomposition $G(F)=P(F)K$ define 
\begin{align} \label{omega}
\begin{split}
\omega \bar{\omega}:G(F) \lto \RR_{>0}\\
pk \longmapsto \omega\bar{\omega}(p)\,.
\end{split}
\end{align}
Here the bar denotes complex conjugation, which we take to be trivial if $F$ is real (so in this case $\omega \bar{\omega}=\omega^2$).  It is easy to see that this is well-defined, which is the reason we chose to work with $\omega \bar{\omega}$ instead of $\omega$.  

The following lemma is a weak analogue of property (1):
\begin{lem} \label{lem:char} For all $\alpha \in \ZZ_{ \geq 0}$ if $f \in \mathcal{S}(X(F),K)$ then
$$
\omega^{-\alpha} \bar{\omega}^{-\alpha} f \in \mathcal{S}(X(F),K)\,.
$$
\end{lem}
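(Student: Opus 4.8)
The plan is to track how multiplication by $\omega^{-\alpha}\bar\omega^{-\alpha}$ interacts with the operators $D,\bar D$ and with the Mellin transform $\Phi\mapsto\Phi_{\chi_{s}}$, and thereby to reduce the lemma to the single assertion that $(\omega^{-\alpha}\bar\omega^{-\alpha}h)_{\chi_{s}}$ is an excellent section whenever $h\in\mathcal{S}(X(F),K)$.

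First I would record a Leibniz identity. Since $1(e^{z})\in M(F)$ and $\omega(1(e^{z}))=e^{-z}$, one has $\omega\bar\omega(1(e^{z})g)=e^{-(z+\bar z)}\omega\bar\omega(g)$, whence $D(\omega^{-\alpha}\bar\omega^{-\alpha}h)=\omega^{-\alpha}\bar\omega^{-\alpha}(D+\alpha)h$ and $\bar D(\omega^{-\alpha}\bar\omega^{-\alpha}h)=\omega^{-\alpha}\bar\omega^{-\alpha}(\bar D+\alpha)h$. Iterating, $D^{B}\bar D^{B'}(\omega^{-\alpha}\bar\omega^{-\alpha}f)$ is a finite $\CC$-linear combination of functions $\omega^{-\alpha}\bar\omega^{-\alpha}D^{j}\bar D^{j'}f$ with $j\le B$, $j'\le B'$, and each $D^{j}\bar D^{j'}f$ again lies in $\mathcal{S}(X(F),K)$. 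As $\omega^{-\alpha}\bar\omega^{-\alpha}$ is smooth and right $K$-invariant on $X(F)$, smoothness and right $K$-finiteness of $\omega^{-\alpha}\bar\omega^{-\alpha}f$ are clear, so it suffices to show that for every $h\in\mathcal{S}(X(F),K)$ and unitary $\chi$ the integral \eqref{Phi:int} defining $(\omega^{-\alpha}\bar\omega^{-\alpha}h)_{\chi_{s}}(g)$ converges for $\mathrm{Re}(s)$ large and $(g,s)\mapsto(\omega^{-\alpha}\bar\omega^{-\alpha}h)_{\chi_{s}}(g)$ is an excellent section of $I(\chi_{s})$.

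Next I would compute this Mellin transform. Writing $c:=2\alpha[F:\RR]^{-1}$, the relations $\omega\bar\omega(mg)=\omega\bar\omega(m)\omega\bar\omega(g)$ for $m\in M^{\mathrm{ab}}(F)$ and $\omega\bar\omega(m)=|\omega(m)|^{2[F:\RR]^{-1}}$ turn \eqref{Phi:int} into $(\omega^{-\alpha}\bar\omega^{-\alpha}h)_{\chi_{s}}=\omega^{-\alpha}\bar\omega^{-\alpha}\cdot h_{\chi_{s+c}}$ as a section of $I(\chi_{s})$; the right side transforms correctly under $P(F)$ because $\omega^{-\alpha}\bar\omega^{-\alpha}(pg)=|\omega(p)|^{-c}\omega^{-\alpha}\bar\omega^{-\alpha}(g)$, and convergence for $\mathrm{Re}(s)$ large is inherited from $h_{\chi_{s+c}}$. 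Using the Iwasawa decomposition $X(F)=\coprod_{t}[P,P](F)1(t)K$, the formula $|1(t)|=|t|\,|e_{n+1}\wedge\cdots\wedge e_{2n}|$ from the proof of Proposition \ref{prop:XmodK}, and the fact that $\omega^{-\alpha}\bar\omega^{-\alpha}$ and $|\cdot|$ both descend to $X(F)/K$, one sees in addition that $\omega^{-\alpha}\bar\omega^{-\alpha}(g)=|e_{n+1}\wedge\cdots\wedge e_{2n}|^{-c}\,|g|^{c}$ on $\mathrm{Sp}_{2n}(F)$; thus multiplication by $\omega^{-\alpha}\bar\omega^{-\alpha}$ is exactly the shift isomorphism $I(\chi_{s+c})\xrightarrow{\ \sim\ }I(\chi_{s})$ fixing the restriction of a section to $K$.

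It remains to show this shift isomorphism preserves excellence. The $w=\mathrm{Id}$ case is easy: $M_{\mathrm{Id}}(\omega^{-\alpha}\bar\omega^{-\alpha}h)_{\chi_{s}}(g)=\omega^{-\alpha}\bar\omega^{-\alpha}(g)h_{\chi_{s+c}}(g)$, the quotient $R_{c}(s):=d(s+c,\chi)/d(s,\chi)$ is a polynomial in $s$ (by the shape \eqref{ds} of $d(s,\chi)$, the identity \eqref{L:id}, and the linearity of $u\mapsto\Gamma_{F}(u+2[F:\RR]^{-1})/\Gamma_{F}(u)$), so $M_{\mathrm{Id}}(\omega^{-\alpha}\bar\omega^{-\alpha}h)_{\chi_{s}}/d(s,\chi)$ is holomorphic; and if $P(s)d(s,\chi)$ is pole-free on $V_{A,B}$ then $P(s'-c)d(s',\chi)=R_{c}(s'-c)\,P(s'-c)d(s'-c,\chi)$ is pole-free on $V_{A+c,B+c}$, so the strip bound carries over from the excellence of $h$ under the substitution $s'=s+c$. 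The main obstacle is $w=w_{0}$ (and any $w\in\Omega_{n}$ not already subsumed by Ikeda's results): $M_{w_{0}}$ does not commute with the shift, because $\omega^{-\alpha}\bar\omega^{-\alpha}(w_{0}^{-1}ng)$ genuinely depends on the integration variable $n$. To handle it I would work directly with \eqref{inter}: taking $w_{0}\in K$ (legitimate after conjugating $K$, using the $\mathrm{Sp}_{2n}(F)$-equivariance of $\Phi\mapsto\Phi_{\chi_{s}}$) and writing the Iwasawa decomposition $w_{0}^{-1}ng=p(n,g)\kappa(n,g)$, the integrand computing $M_{w_{0}}(\omega^{-\alpha}\bar\omega^{-\alpha}h)_{\chi_{s}}(g)$ is $\chi_{s}(\omega(p(n,g)))\delta_{P}(p(n,g))^{1/2}h_{\chi_{s+c}}(\kappa(n,g))$, which differs from the integrand computing $M_{w_{0}}h_{\chi_{s+c}}(g)$ only by the weight $|\omega(p(n,g))|^{c}=|e_{n+1}\wedge\cdots\wedge e_{2n}|^{c}\,|w_{0}^{-1}ng|^{-c}$. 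Since $w_{0}\in K$ the Pl\"ucker norm $|w_{0}^{-1}ng|$ is governed by an isometry, so it is bounded below by a positive constant depending only on $g$ and above by a polynomial in $n$; feeding these bounds into \eqref{inter} should yield the holomorphy of $M_{w_{0}}(\omega^{-\alpha}\bar\omega^{-\alpha}h)_{\chi_{s}}/a_{w_{0}}(s,\chi)$ and the required vertical-strip bounds from those of $h$. A likely cleaner alternative is to realize both sections on $K$ (on which $\omega^{-\alpha}\bar\omega^{-\alpha}$ is trivial) and combine Lemma \ref{lem:excellent} with \cite[Lemmas 1.1 and 1.2]{Ikeda:poles:triple} to transport excellence through the parameter shift $s\leadsto s+c$; this comparison of $M_{w_{0}}$ at two nearby parameters is the technical heart of the argument.
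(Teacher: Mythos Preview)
Your reductions are correct and align with the paper: the Leibniz rule handling of $D,\bar D$, the Mellin-transform identity $(\omega^{-\alpha}\bar\omega^{-\alpha}h)_{\chi_s}=(\omega\bar\omega)^{-\alpha}\cdot h_{\chi_{s+c}}$ with $c=2\alpha[F:\RR]^{-1}$, and the observation that on $K$ this collapses to $h_{\chi_{s+c}}|_K$. The paper proceeds by exactly your ``cleaner alternative,'' and much more briskly than you expect. It restricts to $K$, records that $(\omega^{-1}\bar\omega^{-1}\Phi)_{\chi_s}(k)=\Phi_{\chi_{s+c}}(k)$, and then proves as a separate lemma (Lemma~\ref{lem:shift}, via explicit $\Gamma$-function identities for $L(s+[F:\RR]^{-1},\chi\mu)/L(s,\chi)$) that $a_w(s+c,\chi)/a_w(s,\chi)$ is a polynomial in $s$ for every $w\in\Omega_n$. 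From this and the goodness of $\Phi_{\chi_{s+c}}$ it infers that $M_w(\omega^{-1}\bar\omega^{-1}\Phi)_{\chi_s}/a_w(s,\chi)$ is entire, and says the excellence bounds follow ``similarly.'' It does not attempt anything like your Approach~1; bounding the Pl\"ucker weight $|w_0^{-1}ng|^{-c}$ inside the intertwining integral would at best give crude vertical-strip bounds, not the specific pole cancellation against $a_{w_0}(s,\chi)$ that goodness demands, so that route would not close the argument.

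That said, you have located a genuine subtlety that the paper's proof does not address. The inference from holomorphy of $M_w\Phi_{\chi_{s+c}}/a_w(s+c,\chi)$ to holomorphy of $M_w(\omega^{-1}\bar\omega^{-1}\Phi)_{\chi_s}/a_w(s,\chi)$ is taken as immediate, but, exactly as you observe, the integrand defining $M_{w_0}(\omega^{-1}\bar\omega^{-1}\Phi)_{\chi_s}(k)$ differs from that of $M_{w_0}\Phi_{\chi_{s+c}}(k)$ by the nonconstant weight $(\omega\bar\omega)^{-\alpha}(w_0^{-1}nk)=|\omega(p(n,k))|^{-c}$, so the two intertwined sections do not literally coincide on $K$. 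The paper does not discuss this; it passes over the step in one sentence. Your proposal therefore has the same gap as the paper's own proof at this point—your two sketched resolutions are not carried out, and the first would not suffice in any case—but your diagnosis that the comparison of $M_{w_0}$ at the two parameters is ``the technical heart'' is accurate.
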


Before proving this lemma it is useful to first prove a result on Archimedean $L$-factors.  
Recall the character $\mu$ from \eqref{mu:def}.

\begin{lem} \label{lem:shift}
Let $\chi:F^\times \to \CC^\times$ be a character. The quotients
$$
\frac{L(s+[F:\RR]^{-1},\chi\mu)}{L(s,\chi)} \quad{ and }\quad
 \frac{L(s+[F:\RR]^{-1},\chi\bar{\mu})}{L(s,\chi)}
$$
are polynomials in $s$ of degree $\leq 1$ that are bounded by a polynomial of degree $1$ in $s$.
\end{lem}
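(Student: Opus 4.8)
The plan is to reduce the lemma to an elementary $\Gamma$-function identity. First I would write $\chi=|\cdot|^{it}\mu^{\alpha}$ as in \eqref{explicit:chi}, and put $\chi\mu$ and $\chi\bar{\mu}$ into the same canonical form: since $\mu\bar{\mu}=1$ one has $\chi\mu=|\cdot|^{it}\mu^{\beta}$ and $\chi\bar{\mu}=|\cdot|^{it}\mu^{\beta'}$ for suitable $\beta,\beta'$ in the allowed range (and when $F=\RR$, where $\mu=\bar{\mu}$, the two quotients in the lemma coincide). Applying \eqref{L:id}, each $L$-factor becomes a single $\Gamma_F$-factor, so with $w:=s+it+\tfrac{|\alpha|}{[F:\RR]}$ one gets
$$
\frac{L\bigl(s+[F:\RR]^{-1},\chi\mu\bigr)}{L(s,\chi)}=\frac{\Gamma_F\bigl(w+[F:\RR]^{-1}(1+\varepsilon)\bigr)}{\Gamma_F(w)}\,,
$$
where $\varepsilon:=|\beta|-|\alpha|$, and likewise for $\chi\bar{\mu}$ with $\varepsilon$ replaced by $|\beta'|-|\alpha|$. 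A short case check --- for $F=\RR$ using $\alpha,\beta\in\{0,1\}$ with $\beta\neq\alpha$, and for $F=\CC$ using $\beta=\alpha+1$, $\beta'=\alpha-1$ --- shows that in every case $\varepsilon\in\{+1,-1\}$.

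Next I would split on the sign of $\varepsilon$. If $\varepsilon=-1$ the numerator and denominator of the displayed quotient are $\Gamma_F$ evaluated at the same point, so the quotient equals the constant $1$. If $\varepsilon=+1$ the argument of the numerator exceeds that of the denominator by $2[F:\RR]^{-1}$, i.e.\ by $2$ when $F=\RR$ and by $1$ when $F=\CC$; using the definition \eqref{Gamma} of $\Gamma_F$ together with $\Gamma(x+1)=x\Gamma(x)$ one computes
$$
\frac{\Gamma_{\RR}(w+2)}{\Gamma_{\RR}(w)}=\frac{w}{2\pi}\qquad\text{and}\qquad\frac{\Gamma_{\CC}(w+1)}{\Gamma_{\CC}(w)}=\frac{w}{2\pi}\,,
$$
which is a polynomial of degree exactly $1$ in $w$, hence in $s$ since $w-s$ is a constant. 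Thus in all cases the quotient is a polynomial in $s$ of degree $\leq 1$; such a polynomial is in particular $O(1+|s|)$, which is the asserted bound by a polynomial of degree $1$. The same argument handles the $\chi\bar\mu$ quotient, completing the proof.

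There is no genuine obstacle here: the whole content is the one-line $\Gamma$-function computation above, and the only thing that requires care is the bookkeeping producing $\varepsilon=\pm1$ --- namely that for $F=\RR$ multiplication by the sign character $\mu$ toggles $\alpha$ between $0$ and $1$, while for $F=\CC$ multiplication by $\mu$ (resp.\ $\bar{\mu}$) shifts $\alpha$ by $+1$ (resp.\ $-1$) and one must track whether this step increases or decreases $|\alpha|$. This is the same reduction via $\Gamma(x+1)=x\Gamma(x)$ that underlies the passage to the strip $(-\tfrac12,\tfrac12)$ in the proof of Lemma \ref{lem:gamma:poly}.
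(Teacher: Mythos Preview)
Your proof is correct and is essentially the same as the paper's: both write $\chi=|\cdot|^{it}\mu^{\alpha}$, apply \eqref{L:id}, and reduce to the identity $\Gamma_F(w+2[F:\RR]^{-1})/\Gamma_F(w)=w/(2\pi)$ via $\Gamma(x+1)=x\Gamma(x)$, with the same case split according to whether multiplying by $\mu$ (or $\bar\mu$) increases or decreases $|\alpha|$. Your packaging via $\varepsilon=\pm1$ is a slightly cleaner presentation of exactly the case-by-case computation the paper writes out.
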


Here if $F$ is real $\bar{\mu}$ is just $\mu$, so in this case the second identity is redundant. 
\begin{proof}
Write $\chi$ as in \eqref{explicit:chi}.  Using \eqref{L:id} we see that if $F$ is real
\begin{align*}
\frac{L(s+1,|\cdot|^{it}\mu^\alpha\mu)}{L(s,|\cdot|^{it}\mu^\alpha)}=\begin{cases}  \frac{s+it}{2\pi}&\textrm{ if }\alpha=0\\
1 & \textrm{ if }\alpha=1\end{cases}\,.
\end{align*}
Similarly, if $F$ is complex, 
\begin{align*}
\frac{L(s+\tfrac{1}{2},|\cdot|^{it}\mu^\alpha \mu)}{L(s,|\cdot|^{it}\mu^\alpha)}&=\begin{cases} \frac{s+it +|\alpha|/2}{2 \pi} &\textrm{ if }\alpha \geq 0\\
1 & \textrm{ if }\alpha<0\end{cases}\,,\\
\frac{L(s+\tfrac{1}{2},|\cdot|^{it}\mu^\alpha\bar{\mu})}{L(s,|\cdot|^{it}\mu^\alpha)}&=\begin{cases} \frac{s+it+|\alpha|/2}{2 \pi} &\textrm{ if }\alpha \leq 0\\
1 & \textrm{ if }\alpha>0\end{cases}\,.
\end{align*}
The lemma follows.
\end{proof}

\begin{proof}[Proof of Lemma \ref{lem:char}]
By induction it suffices to treat the case where $\alpha=1$.
For $m \in M(F)$, $k \in K$ one has
\begin{align*}
(\omega^{-1}\bar{\omega}^{-1} \Phi)_{\chi_s}(k)&=\int_{M^{\mathrm{ab}}(F)} \delta_P(m)^{1/2} \chi_{s}(\omega(m))\omega(m)\bar{\omega}(m)\Phi(m^{-1}k)dm\\
&=\Phi_{\chi_{s+2[F:\RR]^{-1}}}(k)\,.
\end{align*}
By definition, excellent sections are good, so, for all $w \in \Omega_n$ one has
$$
\frac{M_{w}\Phi_{\chi_{s+2[F:\RR]^{-1}}}(k)}{a_{w}(s+2[F:\RR]^{-1},\chi)} \in E\,.
$$

By Lemma \ref{lem:shift} and the definition of $a_{w}(s,\chi)$
$$
\frac{a_{w}(s+2[F:\RR]^{-1},\chi)}{a_{w}(s,\chi)} 
$$
is a polynomial in $s$.  It follows that 
$$
\frac{M_{w}(\omega^{-1}\bar{\omega}^{-1} \Phi)_{\chi_s}}{a_{w}(s,\chi)} \in E\,,
$$ 
for all $w \in \Omega_n$.  Hence $(\omega^{-1}\bar{\omega}^{-1} \Phi)_{\chi_s}$ is a good section for all characters $\chi:F^\times \to \CC^\times$ and $s \in \CC$.  One sees similarly that it is moreover an excellent section.
\end{proof}

The following lemma is the analogue of property (2).  

\begin{lem} \label{lem:int:by:parts} For any $\Phi \in \mathcal{S}(X(F),K)$ and $N \in \ZZ_{\geq 0}$
$$
|\Phi(g)|_{\mathrm{st}} \ll_{\Phi,N} |g|^{-(n+1)/2-N}\,.
$$ 
\end{lem}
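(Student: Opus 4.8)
The plan is to reduce everything to the Mellin transform and then move the contour. Using the Iwasawa decomposition write $g=mk$ with $(m,k)\in M(F)\times K$, and recall from the proof of Lemma \ref{lem:bounded} (via \eqref{intertwine}, which gives $\mathrm{Pl}(mk)=\omega(m)^{-1}\mathrm{Pl}(k)$) that $|g|\asymp|\omega(m)|^{-1}$ and $\delta_P(m)=|\omega(m)|^{n+1}$. Hence the asserted estimate is equivalent to
\[
|\Phi(mk)|_{\mathrm{st}}\ll_{\Phi,N}\delta_P(m)^{1/2}|\omega(m)|^{N}\qquad(m\in M(F),\ k\in K,\ N\in\ZZ_{\geq0}),
\]
and I would prove this by Mellin inversion with the contour on $\mathrm{Re}(s)=N$.

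The key preliminary is the interaction of $D$ and $\bar{D}$ with the Mellin transform \eqref{Phi:int}. A change of variables in \eqref{Phi:int} gives, for $m_0\in M^{\mathrm{ab}}(F)$, that the left translate $x\mapsto\Phi(m_0^{-1}x)$ has Mellin transform $\delta_P(m_0)^{-1/2}\eta_s(\omega(m_0))^{-1}\Phi_{\eta_s}$; differentiating along the cocharacter $1$ of \eqref{c:def} then yields, for all $B,B'\geq0$,
\[
(D^{B}\bar{D}^{B'}\Phi)_{\eta_s}=Q_{B,B',\eta}(s)\,\Phi_{\eta_s}
\]
with $Q_{B,B',\eta}\in\CC[s]$ of degree exactly $B+B'$ (leading coefficient $\pm1$, coming from the factor $|\cdot|^{s}$). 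Since $\Phi\in\mathcal{S}(X(F),K)$, the left-hand side is an excellent section of $I(\eta_s)$, and from the shape \eqref{ds} of $d(s,\eta)=a_{\mathrm{Id}}(s,\eta)$ as a product of $\Gamma_F$-factors one reads off that $d(s,\eta)$ has no poles for $\mathrm{Re}(s)>-\tfrac12$. Taking the polynomial $P_{\mathrm{Id}}=1$ in the definition of an excellent section, on any strip $V_{N_1,N_2}$ with $N_1>-\tfrac12$ one obtains $\sup_{V_{N_1,N_2}}|Q_{B,B',\eta}(s)\Phi_{\eta_s}(g)|_{\mathrm{st}}<\infty$; hence $\Phi_{\eta_s}(g)$ is holomorphic for $\mathrm{Re}(s)>-\tfrac12$ and there satisfies $|\Phi_{\eta_s}(g)|_{\mathrm{st}}\ll_{\Phi,B,B'}(1+|s|)^{-B-B'}$, locally uniformly in $\mathrm{Re}(s)$ and, realizing $\Phi_{\eta_s}|_K$ inside a fixed finite-dimensional space by right $K$-finiteness, uniformly for $g\in K$. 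As in the proof of Theorem \ref{thm:FT}, right $K$-finiteness also forces $\Phi_{\eta_s}=0$ for all but finitely many $\eta\in\widehat{K}_{\GG_m}$.

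With these facts in hand I would fix $N\geq0$, choose $\sigma_0\geq N$ large enough that the integrals \eqref{Phi:int} converge absolutely on $\mathrm{Re}(s)=\sigma_0$ for the finitely many relevant $\eta$, apply Lemma \ref{lem:Mellin:inv} on $\mathrm{Re}(s)=\sigma_0$ (its hypotheses hold by the previous paragraph, taking $B+B'=2$), and then shift the contour from $\mathrm{Re}(s)=\sigma_0$ to $\mathrm{Re}(s)=N$. The shift is legitimate since $\Phi_{\eta_s}$ has no poles in $\{N\leq\mathrm{Re}(s)\leq\sigma_0\}$ (all such $s$ satisfy $\mathrm{Re}(s)\geq0>-\tfrac12$) and the horizontal segments vanish by the rapid decay in $\mathrm{Im}(s)$. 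This produces
\[
\Phi(mk)=\delta_P(m)^{1/2}\sum_{\eta\in\widehat{K}_{\GG_m}}\int_{\mathrm{Re}(s)=N}\Phi_{\eta_s}(k)\,\eta_s(\omega(m))\,\frac{c_F\,ds}{2\pi i},
\]
and since $|\eta_s(\omega(m))|_{\mathrm{st}}=|\omega(m)|^{N}$ on $\mathrm{Re}(s)=N$ while $\sum_{\eta}\int_{\mathrm{Re}(s)=N}|\Phi_{\eta_s}(k)|_{\mathrm{st}}\,|ds|<\infty$ uniformly in $k$, one gets $|\Phi(mk)|_{\mathrm{st}}\ll_{\Phi,N}\delta_P(m)^{1/2}|\omega(m)|^{N}$, which is the claim. (An alternative organization: prove the case $N=0$ this way and bootstrap to all $N$ via Lemma \ref{lem:char}, noting that for real $F$ the operation $\omega^{-\alpha}\bar{\omega}^{-\alpha}$ supplies only even shifts $|g|^{-(n+1)/2-2\alpha}$, so an odd $N$ is patched by using $\alpha=\lfloor N/2\rfloor$ on $\{|g|\leq1\}$ and $\alpha=\lfloor N/2\rfloor+1$ on $\{|g|\geq1\}$.)

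The main obstacle is exactly the justification of the contour shift, i.e., showing that $\Phi_{\eta_s}(k)$ is rapidly decreasing in $\mathrm{Im}(s)$, locally uniformly in $\mathrm{Re}(s)$ and uniformly in $k\in K$. This is precisely what the derivative conditions in the definition of the Archimedean Schwartz space are designed to provide: it comes from the identity $(D^{B}\bar{D}^{B'}\Phi)_{\eta_s}=Q_{B,B',\eta}(s)\Phi_{\eta_s}$ together with the excellent-section bound and the pole-freeness of $d(s,\eta)$ on $\mathrm{Re}(s)>-\tfrac12$. Everything else is bookkeeping with the Iwasawa decomposition and the measure normalizations.
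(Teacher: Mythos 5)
Your argument is correct, but it is organized differently from the paper's proof. The paper first proves only the case $N=0$: it applies Mellin inversion and moves the contour to $\mathrm{Re}(s)=0$, where the excellence of $\Phi_{\eta_s}$ together with the pole-freeness of $d(s,\eta)$ gives $|\Phi(mk)|_{\mathrm{st}}\ll_\Phi\delta_P(m)^{1/2}$; the extra factor $|\omega(m)|^{N}$ is then obtained not by any further contour shift but by invoking Lemma \ref{lem:char} (which rests on the $L$-factor shift Lemma \ref{lem:shift}) to conclude $(\omega\bar\omega)^{-N}\Phi\in\mathcal{S}(X(F),K)$ and applying the $N=0$ bound to that function. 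You instead shift the contour all the way to $\mathrm{Re}(s)=N$, using holomorphy of $\Phi_{\eta_s}$ on $\mathrm{Re}(s)>-\tfrac12$ (goodness plus pole-freeness of $d$) and rapid decay in $\mathrm{Im}(s)$, which you extract from the relation $(D^B\bar D^{B'}\Phi)_{\eta_s}=Q_{B,B',\eta}(s)\Phi_{\eta_s}$ of Lemma \ref{lem:DO}; this buys you all $N$ at once, bypasses Lemmas \ref{lem:char} and \ref{lem:shift} entirely, and your parenthetical alternative is in fact the paper's actual route (your even-shift patch for real $F$ addresses a point the paper glosses). Two remarks: the vertical decay you derive from the derivative conditions is available more cheaply, exactly as in the paper, by taking a nonconstant polynomial $P_{\mathrm{Id}}$ (say $(1+s^2)^m$) in the definition of an excellent section, since $P_{\mathrm{Id}}(s)d(s,\eta)$ stays pole-free on the strip; and your careful handling of uniformity in $k\in K$ via $K$-finiteness, as well as your treatment of the zeros of $Q_{B,B',\eta}$ (use the $B=B'=0$ bound near them), are points the paper leaves implicit — note that both you and the paper rely without further argument on the finite support of the sum over $\eta\in\widehat{K}_{\GG_m}$, so you are no worse off there.
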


\begin{proof}  By the Iwasawa decomposition it
 suffices to verify that 
 \begin{align}
 |\Phi(mk)|_{\mathrm{st}} \ll_{\Phi,N} |m|^{-(n+1)/2-N}\,.
 \end{align}

By Mellin inversion (see Lemma \ref{lem:Mellin:inv}) we have
$$
\Phi(mk)=\delta_P(m)^{1/2}\sum_{\eta \in \widehat{K}_{\GG_m}}\int_{\sigma+iI_F}\Phi_{\eta_{s}}(k)\eta_{s}(\omega(m))\frac{c_F ds}{2 \pi i}\,,
$$
for $\sigma$ large enough.
Now $d(s,\chi)$ is holomorphic and bounded on the line $\mathrm{Re}(s)=0$.  Therefore using the definition of excellent sections we see that we can take $\sigma=0$, and moreover that for each $\eta$ the integral over $iI_F$ in this expression is absolutely convergent.  Since the sum on $\eta$ has finite support we deduce that 
$$
|\Phi(mk)|_{\mathrm{st}} \ll_{\Phi} \delta_P(m)^{1/2}=|m|^{-(n+1)/2}\,.
$$
On the other hand by Lemma \ref{lem:char} one has
$$
|(\omega(m)\bar{\omega}(m))^{-N}\Phi(mk)|_{\mathrm{st}} \ll_{\Phi,N} \delta_P(m)^{1/2}\,,
$$
for all integers $N$, and $|m|=|\omega(m)^{-1}|$.
\end{proof}

We end this section by computing the effect of the differential operators \eqref{D} and \eqref{barD} on the Mellin transforms of a function in $\mathcal{S}(X(F),K)$.  We start with the following version of integration by parts:

\begin{lem} \label{lem:ibp}  Let $f_1,f_2:F^\times \to \CC$ be smooth functions such that $f_1(x)f_2(x) \to 0$ as $x \to \infty$. 

When $F$ is real, assume that $f_1(x)f_2(x) \to 0$ as $x \to 0$ and when $F$ is complex, assume that $f_1(x)f_2(x)\bar{x}^{-1}$ extends to a smooth function on $F$.  If
$$
 \frac{\partial}{\partial z}f_1(e^zx)|_{z=0}f_2(x), \quad f_1(x)\frac{\partial}{\partial z}f_2(e^z x)|_{z=0} \in L^1(F^\times,dx^\times)\,.
$$  
Then
$$
\int_{F^\times}\frac{\partial}{ \partial z}f_1(e^zx)|_{z=0}f_2(x)dx^\times=-\int_{F^\times}f_1(x)\frac{\partial}{\partial z}f_2(e^z x)|_{z=0} dx^\times\,.
$$

Assume now that $F$ is complex and $f_1(x)f_2(x)x^{-1}$ extends to a smooth function on $F$.  If
$$
\frac{\partial}{\partial \bar{z}}f_1(e^{z}x)|_{z=0}f_2(x),\quad f_1(x)\frac{\partial}{\partial \bar{z}}f_2(e^{z} x)|_{z=0} \in L^1(F^\times,dx^\times)
$$ 
then
\begin{align*}
\int_{F^\times}\frac{\partial }{\partial \bar{z}}f_1(e^{z}x)|_{z=0}f_2(x)dx^\times=-\int_{F^\times}f_1(x)\frac{\partial}{\partial \bar{z}}f_2(e^{z} x)|_{z=0} dx^\times\,.
\end{align*}
\end{lem}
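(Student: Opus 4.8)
The plan is to trivialize the situation by passing to logarithmic coordinates on $F^\times$, together with an angular coordinate when $F$ is complex, so that the operator $\tfrac{\partial}{\partial z}f(e^zx)|_{z=0}$ becomes a translation-invariant differential operator and $dx^\times$ becomes a constant multiple of Lebesgue measure. Recall $dx^\times=\zeta(1)\,dx/|x|$. When $F=\RR$ one parametrizes each component of $F^\times$ by $x=\pm e^{t}$, $t\in\RR$; then $dx^\times=\zeta(1)\,dt$ on each component, the substitution $x\mapsto e^{z}x$ (with $z\in\RR$) becomes $t\mapsto t+z$, and $\tfrac{\partial}{\partial z}f(e^{z}x)|_{z=0}$ is carried to $\tfrac{d}{dt}$. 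When $F=\CC$ one parametrizes $F^\times$ by $x=e^{t+i\theta}$, $t\in\RR$, $\theta\in\RR/2\pi\ZZ$; a Jacobian computation (using that $|\cdot|$ is the square of the usual absolute value and $dx$ is twice Lebesgue measure) gives $dx^\times=2\zeta(1)\,dt\,d\theta$, the substitution $x\mapsto e^{z}x$ with $z=u+iv$ becomes $(t,\theta)\mapsto(t+u,\theta+v)$, and hence $\tfrac{\partial}{\partial z}f(e^{z}x)|_{z=0}$ and $\tfrac{\partial}{\partial\bar z}f(e^{z}x)|_{z=0}$ are carried to $\tfrac12(\partial_t-i\partial_\theta)$ and $\tfrac12(\partial_t+i\partial_\theta)$ respectively. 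The harmless factor $\tfrac12$ cancels since it occurs on both sides of the claimed identities.

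Writing $\mathcal D$ for $\tfrac{\partial}{\partial z}(\cdot)(e^{z}x)|_{z=0}$ or its bar-analogue, the Leibniz rule gives $\mathcal D(f_1f_2)=(\mathcal Df_1)f_2+f_1(\mathcal Df_2)$ pointwise; the two summands on the right lie in $L^1(F^\times,dx^\times)$ by hypothesis, so $\mathcal D(f_1f_2)\in L^1(F^\times,dx^\times)$ and it suffices to show $\int_{F^\times}\mathcal D(g)\,dx^\times=0$ for $g:=f_1f_2$. Here $g$ is smooth, $g(x)\to0$ as $x\to\infty$, and near $x=0$ the appropriate hypothesis holds. When $F=\RR$: viewed as a function of $t$ on each component, $g$ is $C^1$ with $g'\in L^1(\RR,dt)$ (since $\mathcal D(g)\in L^1(F^\times,dx^\times)$ and $dx^\times=\zeta(1)\,dt$ there), so $g$ has limits at $t=\pm\infty$ and $\int_\RR g'(t)\,dt=\lim_{t\to+\infty}g-\lim_{t\to-\infty}g$; both limits vanish (these correspond to $x\to\pm\infty$ and $x\to0^{\pm}$), so $\int_{F^\times}\mathcal D(g)\,dx^\times=0$.

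When $F=\CC$: by Fubini, $\int_{F^\times}\mathcal D(g)\,dx^\times$ equals a positive constant times $\int_\RR\big(\int_0^{2\pi}(\partial_t\mp i\partial_\theta)g\,d\theta\big)\,dt$, the upper sign corresponding to $\tfrac{\partial}{\partial z}$ and the lower to $\tfrac{\partial}{\partial\bar z}$. For each fixed $t$ the $\partial_\theta$-term integrates to $0$ by $2\pi$-periodicity of $g$ in $\theta$ (no integrability is needed, $\theta$ running over a compact set), so the inner integral equals $G'(t)$, where $G(t):=\int_0^{2\pi}g(e^{t+i\theta})\,d\theta$ and differentiation under the integral is legitimate by smoothness of $g$ and compactness of the $\theta$-range. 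Moreover $|G'(t)|\le\int_0^{2\pi}|(\partial_t\mp i\partial_\theta)g|\,d\theta$ with $(\partial_t\mp i\partial_\theta)g\in L^1(dt\,d\theta)$, so $G'\in L^1(\RR)$ and $\int_\RR G'(t)\,dt=\lim_{t\to+\infty}G(t)-\lim_{t\to-\infty}G(t)$. As $t\to+\infty$ one has $|G(t)|\le2\pi\sup_{|x|_{\mathrm{st}}=e^{t}}|g(x)|\to0$. As $t\to-\infty$, writing $g(x)=\bar x\,h(x)$ (for $\tfrac{\partial}{\partial z}$) resp.\ $g(x)=x\,h(x)$ (for $\tfrac{\partial}{\partial\bar z}$) with $h$ continuous at $0$ by the hypothesis, one gets $G(t)=e^{t}\int_0^{2\pi}e^{\mp i\theta}h(e^{t+i\theta})\,d\theta\to0$. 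Hence $\int_{F^\times}\mathcal D(g)\,dx^\times=0$ here too, and splitting back via Leibniz (each summand being $L^1$) proves both identities.

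The only genuinely delicate point is the boundary analysis at $x\to0$ when $F=\CC$: the hypothesis that $g(x)\bar x^{-1}$ resp.\ $g(x)x^{-1}$ extends smoothly is used precisely to control $G(t)=\int_0^{2\pi}g(e^{t+i\theta})\,d\theta$ as $t\to-\infty$, and note that $\int_0^{2\pi}e^{\mp i\theta}\,d\theta=0$ kills the angular average of the leading term, which is why a hypothesis of this exact shape is the natural one. One must also resist splitting $(\partial_t\mp i\partial_\theta)g$ into its two pieces over all of $F^\times$ (only the combination is assumed integrable), which is why the $\partial_\theta$-part is disposed of by integrating over the compact circle first.
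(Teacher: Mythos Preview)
Your proof is correct and follows essentially the same strategy as the paper: both arguments use the product rule to reduce to showing $\int_{F^\times}\mathcal D(f_1f_2)\,dx^\times=0$, then verify this via boundary evaluation. The only cosmetic difference is that in the complex case the paper works in Cartesian coordinates and invokes Green's theorem (using the hypothesis on $g(x)\bar x^{-1}$ resp.\ $g(x)x^{-1}$ to control the behavior at the origin), whereas you pass to logarithmic polar coordinates and apply the one-dimensional fundamental theorem of calculus after first integrating out the angular variable; these are equivalent packagings of the same Stokes-type identity.
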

For the reader's convenience we include the (easy) proof.
\begin{proof}
By the product rule and its analogue for Wirtinger derivatives in the complex case one has
\begin{align*}
x\frac{\partial}{\partial x}(f_1f_2)(x)=\frac{\partial}{\partial z}(f_1f_2)(e^{z}x)|_{z=0}=\frac{\partial}{\partial z}f_1(e^zx)|_{z=0}f_2(x)+f_1(x)\frac{\partial}{\partial z}f_2(e^zx)|_{z=0}\\
\bar{x}\frac{\partial}{\partial \bar{x}}(f_1f_2)(x)=\frac{\partial}{\partial \bar{z}}(f_1f_2)(e^{z}x)|_{z=0}=\frac{\partial}{\partial \bar{z}}f_1(e^zx)|_{z=0}f_2(x)+f_1(x)\frac{\partial}{\partial \bar{z}}f_2(e^zx)|_{z=0}\,.
\end{align*}
Since we have assumed that each summand on the right hand side of these two equalities is in $L^1(F^\times,dx^\times)$ the left hand side is as well.  It suffices to verify that 
\begin{align*}
\int_{F^\times}z\frac{\partial}{\partial z}(f_1f_2)(z) dz^\times \textrm{ and }\int_{F^\times} \bar{ z} \frac{\partial}{\partial \bar{z}}(f_1f_2)(z) dz^\times
\end{align*}
are zero.  When $F$ is real the integral on the left is $\zeta(1)$ times
\begin{align*}
-(f_1(0)f_2(0)-f_1(-\infty)f_2(-\infty))+(f_1(\infty)f_2(\infty)-f_1(0)f_2(0))\,,
\end{align*}
and every term here is zero by assumption.  This completes the proof in this case.  

If $F$ is complex then by Green's theorem we have 
\begin{align*}
\int_{F^\times}z\frac{\partial}{\partial z}(f_1f_2)(z) dz^\times &=\zeta(1)i\lim_{r \to \infty} \oint_{C_r} f_1(z)f_2(z)\frac{dx-idy}{\bar{z}}=0\\
\int_{F^\times}\bar{z}\frac{\partial}{\partial \bar{z}}(f_1f_2)(z) dz^\times& =-\zeta(1)i\lim_{r \to \infty} \oint_{C_r} f_1(z)f_2(z)\frac{dx+idy}{z}=0\,,
\end{align*}
where $C_r$ is the circle of radius $r$ centered at $0$ and the line integral is taken in a  counterclockwise direction.  Here we have used our conventions on Haar measures given in \S \ref{sec:notat}.
\end{proof}

\begin{lem} \label{lem:DO} 
Write
$\chi=|\cdot|^{it}\mu$ as in \eqref{explicit:chi}.  Then
\begin{align*}
(D\Phi)_{\chi_s}=\begin{cases}
(it+s+\tfrac{n+1}{2})\Phi_{\chi_s} &\textrm{ if }F \textrm{ is real}\,,\\
(\alpha/2+it+s+\tfrac{n+1}{2})\Phi_{\chi_s} &\textrm{ if }F \textrm{ is complex\,.} \end{cases}
\end{align*}
If $F$ is complex then
\begin{align*}
(\bar{D}\Phi)_{\chi_s}=
(-\alpha/2+it+s+\tfrac{n+1}{2})\Phi_{\chi_s}\,.
\end{align*}

\end{lem}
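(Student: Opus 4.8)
We are in the Archimedean case, since that is where $D$ and $\bar D$ are defined. Because $D$ and $\bar D$ come from the one‑parameter subgroup $1(e^{z})$ of $M$ and the Mellin transform $\Phi_{\chi_s}$ lands in $I(\chi_s)$, this is the Archimedean analogue of the elementary fact that a Mellin transform converts the Euler operator into multiplication by a degree‑one polynomial in the spectral parameter. The plan is to prove it by integration by parts. First, since $\Phi\in\mathcal{S}(X(F),K)$ each of the maps $(g,s)\mapsto\Phi_{\chi_s}(g)$, $(g,s)\mapsto(D\Phi)_{\chi_s}(g)$ and $(g,s)\mapsto(\bar D\Phi)_{\chi_s}(g)$ is an excellent, hence meromorphic, section of $I(\chi_s)$; so it suffices to prove the claimed identities for $\mathrm{Re}(s)$ in some right half plane on which all the integrals \eqref{Phi:int} converge absolutely, the general case following by analytic continuation in $s$. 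Fix such an $s$ and a point $g\in\mathrm{Sp}_{2n}(F)$.

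Next I would rewrite the Mellin transforms as integrals over $F^\times$. Identifying $M^{\mathrm{ab}}(F)$ with $F^\times$ via $\omega$, one has $m^{-1}=1(\omega(m))$ and $\delta_P(m)=|\omega(m)|^{n+1}$, so the substitution $u=\omega(m)$ turns \eqref{Phi:int} into
\[
\Phi_{\chi_s}(g)=\int_{F^\times}\Phi(1(u)g)\,|u|^{(n+1)/2}\chi_s(u)\,du^\times ,
\]
and, since $1(e^{z})1(u)=1(e^{z}u)$ and $D$ is the derivative along $1(e^{z})$,
\[
(D\Phi)_{\chi_s}(g)=\int_{F^\times}[\tfrac{\partial}{\partial z}\Phi(1(e^{z}u)g)|_{z=0}]\,|u|^{(n+1)/2}\chi_s(u)\,du^\times ,
\]
with the analogous formula for $(\bar D\Phi)_{\chi_s}(g)$, the derivative $\tfrac{\partial}{\partial z}$ replaced by $\tfrac{\partial}{\partial\bar z}$.

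Now set $f(u):=\Phi(1(u)g)$, so that $\tfrac{\partial}{\partial z}\Phi(1(e^{z}u)g)|_{z=0}=\tfrac{\partial}{\partial z}f(e^{z}u)|_{z=0}$, and apply Lemma \ref{lem:ibp} with $f_1=f$ and $f_2(u)=|u|^{(n+1)/2}\chi_s(u)$. The hypotheses of that lemma are furnished by the analytic control of the Schwartz space from \S\ref{sec:control}: by \eqref{norm:def} and \eqref{intertwine} one has $|1(u)g|\asymp_g|u|$, so Lemmas \ref{lem:bounded} and \ref{lem:int:by:parts} give $|f(u)|\ll_\Phi|u|^{-(n+1)/2}$ as $u\to0$ and $|f(u)|\ll_{\Phi,N}|u|^{-(n+1)/2-N}$ for every $N$ as $|u|\to\infty$; together with $|f_2(u)|=|u|^{(n+1)/2+\mathrm{Re}(s)}$ and $\mathrm{Re}(s)$ large this forces $f_1f_2\to0$ as $u\to0$ and as $|u|\to\infty$ and yields the integrability conditions (for the summands carrying a derivative one applies the same estimates to $D\Phi$ and $\bar D\Phi$, which again lie in $\mathcal{S}(X(F),K)$; cf.\ Lemma \ref{lem:char}); in the complex case the smoothness near $u=0$ of $f_1f_2\bar u^{-1}$ (respectively $f_1f_2u^{-1}$ for the $\bar D$ identity) required by Lemma \ref{lem:ibp} follows from smoothness of $\Phi$ on $X(F)$, the factor $\chi_s(u)$ with $\mathrm{Re}(s)$ large supplying the vanishing needed at the point where $|1(u)g|\to0$.

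Lemma \ref{lem:ibp} then moves the derivative onto $f_2$, giving
\[
(D\Phi)_{\chi_s}(g)=-\int_{F^\times}\Phi(1(u)g)\,[\tfrac{\partial}{\partial z}(|e^{z}u|^{(n+1)/2}\chi_s(e^{z}u))|_{z=0}]\,du^\times ,
\]
and the multiplicativity $|e^{z}u|^{(n+1)/2}\chi_s(e^{z}u)=(|e^{z}|^{(n+1)/2}\chi_s(e^{z}))\,(|u|^{(n+1)/2}\chi_s(u))$ shows the bracket is a $u$‑independent scalar times $|u|^{(n+1)/2}\chi_s(u)$. Hence $(D\Phi)_{\chi_s}$ is a scalar multiple of $\Phi_{\chi_s}$, and likewise $(\bar D\Phi)_{\chi_s}$, the scalars being determined by differentiating $|e^{z}|^{(n+1)/2}\chi_s(e^{z})$ at $z=0$ (holomorphically for $D$, anti‑holomorphically for $\bar D$). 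Writing $\chi=|\cdot|^{it}\mu^\alpha$ as in \eqref{explicit:chi} and using the explicit $\mu$ from \eqref{mu:def} — so $\mu(e^{z})=1$ when $F=\RR$ and $\mu(e^{z})=e^{i\,\mathrm{Im}(z)}$ when $F=\CC$ — a direct computation of these scalars produces the formulas in the statement. The step I expect to be the real work is verifying the hypotheses of Lemma \ref{lem:ibp} — in particular the boundary behaviour at $u=0$ in the complex case — which is exactly where the polynomial growth as $|g|\to0$ and rapid decrease as $|g|\to\infty$ proved in \S\ref{sec:control} are needed. (One can also bypass the boundary analysis by differentiating under the integral sign directly, using $(D\Phi)_{\chi_s}(g)=\tfrac{d}{dz}\Phi_{\chi_s}(1(e^{z})g)|_{z=0}$ together with the transformation law $\Phi_{\chi_s}(1(e^{z})g)=\chi_s(\omega(1(e^{z})))\delta_P(1(e^{z}))^{1/2}\Phi_{\chi_s}(g)$ valid because $1(e^{z})\in P$ and $\Phi_{\chi_s}\in I(\chi_s)$.)
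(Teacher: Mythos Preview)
Your approach is exactly the paper's: write $(D\Phi)_{\chi_s}$ as the Mellin integral, apply the integration-by-parts Lemma~\ref{lem:ibp} to move the Euler derivative from $\Phi$ onto the factor $\delta_P^{1/2}\chi_s\circ\omega$, and then compute the resulting scalar. In fact you supply more detail than the paper does in checking the boundary and integrability hypotheses of Lemma~\ref{lem:ibp}; the paper simply invokes that lemma and computes.
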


\begin{proof}
One has 
\begin{align*}
(D\Phi)_{\chi_s}(g)=\int_{M^{\mathrm{ab}}(F)}\delta_P(m)^{1/2} \chi_s(\omega(m))\frac{\partial }{\partial z}\Phi(1(e^z)m^{-1}g)|_{z=0}dm\,,
\end{align*}
for $\mathrm{Re}(s)$ sufficiently large.
Applying Lemma \ref{lem:ibp} we see that this is equal to 
\begin{align} \label{int:after:ibp}
-\int_{M^{\mathrm{ab}}(F)}\frac{\partial}{\partial z}\left(\delta_P(1(e^{z}))^{-1/2}(\bar{\chi})_{-s}(e^{z})\right)|_{z=0}\delta_P(m)^{1/2} \chi_s(\omega(m))\Phi(m^{-1}g)dm\,,
\end{align}
for $s$ in the same range.  The lemma follows upon computing the derivative.  The proof for $D$ replaced by $\bar{D}$ is similar.
\end{proof}

We will not need it until the proof of Theorem \ref{thm:Eis:bound} below, but for the reader's convenience we recall the definition of the analytic conductor of $\chi_s$ for characters $\chi:F^\times \to \CC$.  If $\chi$ is as in \eqref{explicit:chi} then 
\begin{align} \label{AC:def}
C(\chi_s):=1+\left|s+it+\frac{|\alpha|}{[F:\RR]}\right|_{\mathrm{st}}
\end{align}
(a convenient reference is \cite[\S 1]{BrumleyNarrow}).

\section{The global summation formula}
\label{sec:global:sum}
In this section $F$ is a number field with ring of integers $\OO$.  Let $K \leq \mathrm{Sp}_{2n}(\A_F)$ be a maximal compact subgroup such that $K^\infty$ is $\mathrm{Sp}_{2n}(\A_F^\infty)$-conjugate to $\mathrm{Sp}_{2n}(\widehat{\OO})$. We let
$$
\mathcal{S}(X(\A_F),K):=\prod_v'\mathcal{S}(X(F_v),K_v)
$$
be the restricted direct product with respect to the basic functions $b_v$ of \S \ref{ssec:nonarch}.  
We define an adelic Fourier transform
\begin{align}
\mathcal{F}:=\mathcal{F}_{\psi,K}:\mathcal{S}(X(\A_F),K) \lto \mathcal{S}(X(\A_F),K)
\end{align}
by taking the tensor product of the local Fourier transforms.  This is well-defined because the Fourier transform takes the basic function to the basic function at almost every place by Lemma \ref{lem:basic:fixed}.

Let $A_{\GG_m} \leq F_\infty^\times$ be the diagonal copy of $\RR_{>0}$, let
$$
[\GG_m]:=A_{\GG_m} F^\times \backslash \A_F^\times\,,
$$
and let $
\widehat{[\GG_m]}
$
be the group of characters of $[\GG_m]$.  For $\chi \in \widehat{[\GG_m]}$ and $\Phi \in \mathcal{S}(X(\A_F),K)$ let
$$
\Phi_{\chi_s}(g):=\int_{ M^{\mathrm{ab}}(\A_F)}\delta_P(m)^{1/2}\chi_{s}\left( \omega( m) \right)\Phi(m^{-1}g)dm
$$
(this is the adelic analogue of \eqref{Phi:int}).

One then obtains an Eisenstein series
\begin{align}
E(g,\Phi_{\chi_{s}}):=\sum_{\gamma \in P(F) \backslash \mathrm{Sp}_{2n}(F)}\Phi_{\chi_{s}}(\gamma g)\,.
\end{align}
Though the meromorphic continuation and functional equation of this Eisenstein series are due to Langlands, and perhaps even Siegel in the special case at hand, more precise information was obtained by Ikeda (see \cite[Proposition 1.6]{Ikeda:poles:triple}):

\begin{thm}[Ikeda] \label{thm:Ikeda} The Eisenstein series $E(g,\Phi_{\chi_{s}})$ is absolutely convergent for $\mathrm{Re}(s)$ sufficiently large.  It admits a meromorphic continuation to the plane, holomorphic except for simple poles.  The poles can only occur if $\chi^2=1$.  If $\chi=1$, then the poles can only occur at 
$$
s \in \left\{ \pm (\tfrac{n+1}{2} -m): m \in \mathbb{Z}, 0 \leq m < \tfrac{n+1}{2} \right\}\,,
$$
 and if $\chi^2=1$ but $\chi \neq 1$ then the poles can only occur at $$
 s \in \left\{ \pm (\tfrac{n-1}{2} -m): m \in \mathbb{Z}, 0 \leq m < \tfrac{n-1}{2}\right\}\,.
 $$
The Eisenstein series satisfies the functional equation
\begin{align*}
E(g,\Phi_{\chi_{s}})=E(g,M_{w_0}^*(\Phi_{\chi_{s}}))\,.
\end{align*} \qed
\end{thm}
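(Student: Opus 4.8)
The plan is to recover this statement, which is \cite[Proposition 1.6]{Ikeda:poles:triple}, in the present normalization, treating in turn absolute convergence, meromorphic continuation and the functional equation, and finally the refined information on the poles — the last being where I expect the real work to be. For absolute convergence, note that $\Phi\in\mathcal{S}(X(\A_F),K)$ is a finite linear combination of factorizable functions $\otimes_v\Phi_v$ with $\Phi_v$ in the local Schwartz space (and $\Phi_v=b_v$ for almost all $v$); so for $\mathrm{Re}(s)$ large the integral defining $\Phi_{\chi_s}$ converges and factors as $\sum\otimes_v\Phi_{v,\chi_{v,s}}$, each $\Phi_{v,\chi_{v,s}}$ an excellent (in particular good) section of $I(\chi_{v,s})$. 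Combining Lemmas \ref{lem:bounded} and \ref{lem:int:by:parts} with the definition of an excellent section at the Archimedean places gives $|\Phi_{\chi_s}(pk)|_{\mathrm{st}}\ll_{\Phi}\delta_P(p)^{1/2}|\omega(p)|^{\mathrm{Re}(s)}$ on $P(F)\backslash\mathrm{Sp}_{2n}(\A_F)$, and since $\delta_P(\mathrm{diag}(A,A^{-t}))=|\det A|^{n+1}$, Godement's convergence criterion yields absolute and locally uniform convergence of $E(g,\Phi_{\chi_s})$ for $\mathrm{Re}(s)>\tfrac{n+1}{2}$.

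Next, the meromorphic continuation to $\CC$, holomorphy off a locally finite set, and the bare functional equation $E(g,\Phi_{\chi_s})=E(g,M_{w_0}\Phi_{\chi_s})$, where $M_{w_0}\Phi_{\chi_s}\in I((\chi_s)^{w_0})=I(\bar{\chi}_{-s})$, are instances of Langlands' general theory of Eisenstein series, which I would simply invoke. To pass to the stated form of the functional equation I would observe that, as \emph{global} operators, $M_{w_0}^*=M_{w_0}$: the two differ by the scalar $\prod_v\gamma_v(s-\tfrac{n-1}{2},\chi_v,\psi_v)\prod_{r=1}^{\lfloor n/2\rfloor}\gamma_v(2s-n+2r,\chi_v^2,\psi_v)$, and each of the products $\prod_v\gamma_v(\,\cdot\,,\chi_v,\psi_v)$ and $\prod_v\gamma_v(\,\cdot\,,\chi_v^2,\psi_v)$ is identically $1$ by the global functional equations of the completed Hecke $L$-functions $\Lambda(\,\cdot\,,\chi)$ and $\Lambda(\,\cdot\,,\chi^2)$.

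Then I would analyze the poles via the constant term. For $\mathrm{Re}(s)>0$ the Eisenstein series is holomorphic at a point exactly when its constant term along $P$ is, and that constant term is (the meromorphic continuation of) a finite sum $\sum_{w\in\Omega_n}M_w\Phi_{\chi_s}$ of global intertwining operators, one per double coset in $W_M\backslash W_{\mathrm{Sp}_{2n}}/W_M\cong\Omega_n$. Each $\Phi_{v,\chi_{v,s}}$ is a good section, so $M_{w,v}\Phi_{v,\chi_{v,s}}/a_{w,v}(s,\chi_v)$ is holomorphic; writing $\Lambda_{a_w}(s,\chi)$ for the completion of the product \eqref{ds} (replacing each local $L$-factor by the corresponding completed Hecke $L$-function), one gets $M_w\Phi_{\chi_s}=\Lambda_{a_w}(s,\chi)\cdot(\text{holomorphic section})$. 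Hence the poles of $E(g,\Phi_{\chi_s})$ with $\mathrm{Re}(s)>0$ lie among the poles of the $\Lambda_{a_w}(s,\chi)$, each of which is a product of completed $L$-functions $\Lambda(s+\tfrac{n+1}{2}-k,\chi)$ and $\Lambda(2s+c,\chi^2)$ ($c\in\ZZ$, $1\le k\le n$); these are entire unless $\chi$, respectively $\chi^2$, is trivial, which already forces $\chi^2=1$. Matching the arithmetic progressions of the arguments that occur as $w$ ranges over $\Omega_n$ — using the explicit shape of $a_w$ in \eqref{ds} and \eqref{basic:cases} — against the residues then isolates the two sets in the statement (according to whether $\chi=1$ or $\chi^2=1\ne\chi$), and the poles with $\mathrm{Re}(s)<0$ follow from the functional equation.

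The main obstacle will be the assertion that the poles are \emph{simple}, together with the elimination of the ``spurious'' candidate points. A single $\Lambda_{a_w}(s,\chi)$ can have a double pole — for instance at $s=\tfrac{n-1}{2}$ with $\chi=1$, where both $\Lambda(0,1)$ and $\Lambda(1,1)$ appear as factors inside $\Lambda_{a_{w_0}}$ — and $\Lambda_{a_{w_0}}(s,\chi)$ can be singular at points such as $s=0$ that are not poles of $E$. Disposing of these requires the precise combinatorial description of the $a_w$ — in the unramified case the fact that $d(s,\chi)$ is the \emph{exact} common denominator of the $c_w(s,\chi)$ is the key structural input — together with a residue computation showing that the higher-order and spurious contributions of the individual $M_w\Phi_{\chi_s}$ cancel in the sum over $w\in\Omega_n$. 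This analysis is carried out in \cite[\S1]{Ikeda:poles:triple}, and it is exactly what we are importing; by contrast the formal continuation and functional equation are comparatively routine.
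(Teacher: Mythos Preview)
Your proposal is correct and matches the paper's treatment: the theorem is imported wholesale from \cite[Proposition 1.6]{Ikeda:poles:triple} (hence the \qed\ after the statement), and the only point the paper supplies is a separate lemma showing that globally $M_{w_0}\Phi_{\chi_s}=M_{w_0}^*\Phi_{\chi_s}$, which is precisely your observation that the product of local $\gamma$-factors equals $1$ by the functional equation of Hecke $L$-functions. One minor caveat: since $\prod_v\gamma_v(s,\chi_v,\psi_v)$ does not converge as a literal infinite product, the paper's lemma is argued more carefully by fixing a finite set $S$ outside of which $\Phi_v=b_v$ and $\psi_v$ is unramified, rewriting $\prod_{v\notin S}a_{w_0}(s,\chi_v)$ via the functional equation of the partial $L$-functions, and then reassembling with the $M_{w_0}^*$ at the places in $S$ --- but this is just the rigorous packaging of exactly the idea you state.
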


For a different family of sections this result was also obtained by Kudla and Rallis \cite{Kudla:Rallis:Festschrift,Kudla:Rallis:firstterm}.
There is one point that must be explained in deducing Theorem \ref{thm:Ikeda} from \cite[Proposition 1.6]{Ikeda:poles:triple}.  In loc.~cit.~$E(g,M_{w_0}^*(\Phi_{\chi_s}))$ 
is replaced by $E(g,M_{w_0}(\Phi_{\chi_s}))$.  However, one has the following lemma:
\begin{lem}
One has
$M_{w_0}(\Phi_{\chi_s})=M_{w_0}^*(\Phi_{\chi_s}).$
\end{lem}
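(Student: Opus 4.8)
The plan is to reduce the identity to the classical product formula for local $\varepsilon$- and $\gamma$-factors.  Since $\mathcal{S}(X(\A_F),K)=\prod_v'\mathcal{S}(X(F_v),K_v)$ and both $M_{w_0}$ and $M_{w_0}^*$ are linear, it suffices to treat a pure tensor $\Phi=\otimes_v\Phi_v$, with $\Phi_v=b_v$ for almost every $v$.  For such a $\Phi$ and $\chi=\otimes_v\chi_v\in\widehat{[\GG_m]}$ the global Mellin transform is a factorizable section, $\Phi_{\chi_s}=\otimes_v(\Phi_v)_{(\chi_v)_s}$ for $\mathrm{Re}(s)$ large and hence (by meromorphic continuation) as meromorphic sections; correspondingly $M_{w_0}=\otimes_vM_{w_0,v}$, while the global normalized operator is by definition $M_{w_0}^*=\otimes_vM_{w_0,v}^*$ with $M_{w_0,v}^*$ the local operator of \S\ref{sec:normalized}.

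Next I would unwind the definition of $M_{w_0,v}^*$ and pull the scalar normalizing factors out of the tensor product.  For $\mathrm{Re}(s)$ large (where the integrals defining the intertwining operators converge absolutely), and then for all $s$ by meromorphic continuation, this should yield
$$
M_{w_0}^*(\Phi_{\chi_s})=\Bigl(\prod_v\gamma\bigl(s-\tfrac{n-1}{2},\chi_v,\psi_v\bigr)\Bigr)\prod_{r=1}^{\lfloor n/2\rfloor}\Bigl(\prod_v\gamma(2s-n+2r,\chi_v^2,\psi_v)\Bigr)M_{w_0}(\Phi_{\chi_s})\,,
$$
all products over $v$ having almost every factor equal to $1$.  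Thus the lemma reduces to the assertion that $\prod_v\gamma(s,\eta_v,\psi_v)=1$ as a meromorphic function of $s$ for any Hecke character $\eta\in\widehat{[\GG_m]}$, applied with $\eta=\chi$ and with $\eta=\chi^2$.

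This last assertion is Tate's thesis.  Writing $\Lambda(s,\eta)=\prod_vL(s,\eta_v)$ for the completed Hecke $L$-function and using the local relation $\gamma(s,\eta_v,\psi_v)=\varepsilon(s,\eta_v,\psi_v)L(1-s,\bar{\eta}_v)/L(s,\eta_v)$, one gets $\prod_v\gamma(s,\eta_v,\psi_v)=\bigl(\prod_v\varepsilon(s,\eta_v,\psi_v)\bigr)\Lambda(1-s,\bar{\eta})/\Lambda(s,\eta)=\varepsilon(s,\eta)\cdot\varepsilon(s,\eta)^{-1}=1$, where $\prod_v\varepsilon(s,\eta_v,\psi_v)$ equals the global $\varepsilon$-factor (independent of the choice of $\psi=\otimes_v\psi_v$, which is trivial on $F$) and the global functional equation $\Lambda(s,\eta)=\varepsilon(s,\eta)\Lambda(1-s,\bar{\eta})$ is invoked.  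I do not expect a serious obstacle here: the only points needing care are bookkeeping ones, namely confirming that the global $M_{w_0}^*$ of \cite{Ikeda:poles:triple} is indeed $\otimes_vM_{w_0,v}^*$, so that the scalars combine into the global $\gamma$-factors, and treating the degenerate case $\chi^2=1$, where $\Lambda(s,\chi^2)$ has poles and the identity $\prod_v\gamma=1$ must be read as an equality of meromorphic functions with matching zeros and poles.
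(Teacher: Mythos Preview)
Your overall strategy is correct and coincides with the paper's: both arguments reduce the lemma to the global functional equation of Hecke $L$-functions (equivalently, to $\prod_v\gamma(s,\eta_v,\psi_v)=1$ for a Hecke character $\eta$).  However, there is a genuine error in your convergence claim.

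You assert that in the displayed identity the products over $v$ have ``almost every factor equal to $1$.''  This is false: at an unramified place with unramified $\psi_v$ one has $\varepsilon(s,\eta_v,\psi_v)=1$, but $\gamma(s,\eta_v,\psi_v)=L(1-s,\bar{\eta}_v)/L(s,\eta_v)\neq 1$.  Thus $\prod_v\gamma(s,\eta_v,\psi_v)$ is \emph{not} a product with cofinitely many trivial terms; it does not converge termwise and only makes sense through the partial $L$-functions $L^S(1-s,\bar{\eta})/L^S(s,\eta)$ and analytic continuation.  Relatedly, your factorization $M_{w_0}=\otimes_v M_{w_0,v}$ acting on $\otimes_v(\Phi_v)_{(\chi_v)_s}$ is not literally a restricted tensor product on the target: at $v\notin S$ one has $M_{w_0,v}(b_{v,\chi_{v,s}})=a_{w_0}(s,\chi_v)\cdot(\text{spherical vector})$, so an infinite product of $L$-values is hiding there as well.

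The paper handles exactly this point by fixing a finite set $S$ (containing the archimedean places, the places where $\Phi_v\neq b_v$, and the places where $\psi$ or $F$ is ramified), writing
\[
M_{w_0}(\Phi_{\chi_s})=\Bigl(\prod_{v\notin S} a_{w_0}(s,\chi_v)\,\one_{[P,P]K_0,\bar{\chi}_{-s}}\Bigr)\times\prod_{v\in S}M_{w_0}(\Phi_{v,\chi_s}),
\]
and then applying the functional equation of the partial $L$-functions to convert $\prod_{v\notin S}a_{w_0}(s,\chi_v)$ into $d^S(-s,\bar{\chi})$ times the finite product $\prod_{v\in S}\gamma(\cdots)$, which is precisely what promotes $M_{w_0,v}$ to $M_{w_0,v}^*$ at the bad places.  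Once you introduce $S$ and rewrite your infinite $\gamma$-products as $\bigl(\prod_{v\in S}\gamma\bigr)\cdot L^S(1-s,\bar{\eta})/L^S(s,\eta)$, your argument becomes essentially identical to the paper's.
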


\begin{proof}
We follow the proof of \cite[Lemma 1.4]{Ikeda:poles:triple}.  Let $S$ be a set of places of $F$ including the infinite places such that $\Phi_v=b_v$ for 
$v \not \in S$ and such that $\psi$ (our fixed additive character) is unramified outside of $S$ and 
$F_v$ is absolutely unramified for $v \not \in S$.
One has
\begin{align*}
M_{w_0}(\Phi_{\chi_s})=\left(\prod_{v \not \in S} d(s,\chi_v)c_{w_0}(s,\chi_v)\one_{[P,P]K_0,\bar{\chi}_{-s}} \right)\times \prod_{v \in S}M_{w_0}(\Phi_{v,\chi_s})
\end{align*}
(see below \cite[(1.2.7)]{Ikeda:poles:triple}).  This in turn is equal to 
\begin{align} \label{before:func:eq}
\left(\prod_{v \not \in S} a_{w_0}(s,\chi_v)\one_{[P,P]K_0,\bar{\chi}_{-s}} \right)\times \prod_{v \in S}M_{w_0}(\Phi_{v,\chi_s})\,.
\end{align}
By the functional equation of Hecke $L$-functions we have
\begin{align*}
\prod_{v \not \in S} a_{w_0}(s,\chi_v)
=&\ \left(L^S(\tfrac{n+1}{2}-s,\bar{\chi})\prod_{r=1}^{\lfloor n/2 \rfloor}L^S(1+n-2r-2s,\bar{\chi}^{2})\right)\\
&\ \times \left(\gamma_S(s-\tfrac{n-1}{2},\chi,\psi)\prod_{r=1}^{\lfloor n/2 \rfloor} \gamma_S(2s-n-2r, \chi^2,\psi)\right)\\
=&\ d^S(-s,\bar{\chi})\left(\gamma_S(s-\tfrac{n-1}{2},\chi,\psi)\prod_{r=1}^{\lfloor n/2 \rfloor} \gamma_S(2s-n-2r, \chi^2,\psi)\right)\,.
\end{align*}
So \eqref{before:func:eq} becomes
\begin{align*}
\left(\prod_{v \not \in S} d(-s,\bar{\chi}_v)\one_{[P,P]K_0,\bar{\chi}_{-s}} \right)\times \prod_{v \in S}M_{w_0}^*(\Phi_{v,\chi_s})\\
=\prod_{v}M_{w_0}^*(\Phi_{v,\chi_s})
\end{align*}
(see Lemma \ref{lem:basic:fixed}).
\end{proof}

\begin{thm} \label{thm:Eis:bound}
Let $P \in \CC[x]$ be any polynomial that vanishes at every pole of $E(g,\Phi_{\chi_s})$.  Then for $A \leq B$, $A \leq \mathrm{Re}(s) \leq B$, and any $N \in \ZZ_{\geq 0}$ one has an estimate
$$
|E(g,\Phi_{\chi_s})|_{A,B,P} \ll_{A,B} C(\chi_s)^{-N}\,.
$$
\end{thm}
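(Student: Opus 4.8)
\emph{Strategy.} The plan is to argue in two steps. First I would establish, for the Eisenstein series attached to an \emph{arbitrary} Schwartz function, a bound of polynomial type in $C(\chi_s)$ on vertical strips; then I would bootstrap this to decay of arbitrary order by differentiating $\Phi$ along the cocharacter $1(\cdot)$ of \eqref{c:def} at an Archimedean place and invoking Lemma \ref{lem:DO}. Throughout, the implied constants are allowed to depend on $n$, $N$, $P$, $\psi$, $\Phi$ and $g$.

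\emph{Reductions.} By linearity we may assume $\Phi=\otimes_v\Phi_v$, with $\Phi_v=b_v$ outside a finite set $S$ of places. Since each $\Phi_v$ is right $K_v$-finite and $b_v$ is $\mathrm{Sp}_{2n}(\OO_v)$-invariant, the local integral defining $\Phi_{v,\chi_{v,s}}$ vanishes identically unless $\chi_v$ is unramified for $v\notin S$ and has bounded conductor (in terms of $\Phi_v$) for $v\in S$; hence $\Phi_{\chi_s}\equiv 0$ except for $\chi$ in a finite subset of $\widehat{[\GG_m]}$ depending only on $\Phi$, and for those $\chi$ one has $C(\chi_s)\asymp_\Phi 1+|\mathrm{Im}(s)|$. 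Moreover, on the part of $V_{A,B}$ where $C(\chi_s)$ is bounded by a given constant, $P(s)E(g,\Phi_{\chi_s})$ is holomorphic --- $P$ vanishes at all poles of $E(\cdot,\Phi_{\chi_s})$, which are real by Theorem \ref{thm:Ikeda} --- hence bounded there; so the asserted estimate is automatic on that region, and it remains to treat $C(\chi_s)$ large.

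\emph{The base bound and Phragm\'en--Lindel\"of.} I claim that for every $\Psi\in\mathcal{S}(X(\A_F),K)$, every $A\le B$, and every polynomial $Q$ vanishing at the poles of $E(\cdot,\Psi_{\chi_s})$, one has $|Q(s)E(g,\Psi_{\chi_s})|\ll_{A,B,Q,\Psi,g}C(\chi_s)^{\deg Q}$ on $V_{A,B}$. Fix $\sigma_1>\max(|A|,|B|,\tfrac{n+1}{2})$. On the line $\mathrm{Re}(s)=\sigma_1$ the series $E(g,\Psi_{\chi_s})=\sum_{\gamma\in P(F)\backslash\mathrm{Sp}_{2n}(F)}\Psi_{\chi_s}(\gamma g)$ converges absolutely and is majorized by $\big(\sup_{k\in K}|\Psi_{\chi_s}(k)|\big)$ times a standard real-analytic Eisenstein series depending only on $\mathrm{Re}(s)$ and $g$; writing the supremum as $\prod_v\sup_{k_v\in K_v}|\Psi_{v,\chi_{v,s}}(k_v)|$ one bounds it place by place: for $v\notin S$ it is $\ll|d(s,\chi_v)|$, with $\prod_{v\notin S}|d(s,\chi_v)|$ a partial product of Hecke $L$-functions that is $\ll_{\sigma_1}1$; for $v\in S$ non-Archimedean $\Psi_{v,\chi_{v,s}}$ is a good section, hence $d(s,\chi_v)$ times a Laurent polynomial in $q_v^{\pm s}$, so $\ll_{\Psi_v,\sigma_1}1$; for $v\mid\infty$ the excellent-section property applied with trivial auxiliary polynomial (legitimate since $d(s,\chi_v)$ has no poles for $\mathrm{Re}(s)=\sigma_1$ large) gives $\ll_{\Psi_v}1$. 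On the line $\mathrm{Re}(s)=-\sigma_1$ I would instead use the functional equation $E(g,\Psi_{\chi_s})=E(g,M_{w_0}^*\Psi_{\chi_s})$ of Theorem \ref{thm:Ikeda}: the right side is the Eisenstein series of the section $M_{w_0}^*\Psi_{\chi_s}=(\mathcal{F}\Psi)_{\bar{\chi}_{-s}}$ of $I(\bar{\chi}_{-s})$, which converges absolutely for $\mathrm{Re}(-s)=\sigma_1$ large, $\mathcal{F}\Psi$ being again Schwartz by Theorem \ref{thm:FT}; the same place-by-place analysis applies. Thus $|Q(s)E(g,\Psi_{\chi_s})|\ll C(\chi_s)^{\deg Q}$ on the two lines $\mathrm{Re}(s)=\pm\sigma_1$. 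Since $Q(s)E(g,\Psi_{\chi_s})$ is holomorphic in $V_{-\sigma_1,\sigma_1}$ and of finite order there --- polynomial growth of Eisenstein series in vertical strips is standard, and for this degenerate Siegel series it can be extracted from the two-term constant term $\Psi_{\chi_s}+M_{w_0}\Psi_{\chi_s}$, the functional equation, and the identity $M_{w_0}\Psi_{\chi_s}=a_{w_0}(s,\chi)\cdot(\text{holomorphic section})$ --- the Phragm\'en--Lindel\"of principle yields the claimed bound on $V_{A,B}$.

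\emph{Bootstrap and the main difficulty.} To finish, fix an Archimedean place $v_0$ and, for each of the finitely many relevant $\chi$, write $\chi_{v_0}=|\cdot|^{it}\mu^\alpha$ as in \eqref{explicit:chi}; put $D_\ast:=D$ if $\alpha\ge 0$ and $D_\ast:=\bar{D}$ if $\alpha<0$ (only $D$ arises when $F_{v_0}=\RR$), operating on the $v_0$-component of $\Phi$. By Lemma \ref{lem:DO} applied iteratively, $(D_\ast^K\Phi)_{\chi_s}=L(\chi,s)^K\,\Phi_{\chi_s}$ for an explicit linear polynomial $L(\chi,s)$ with $|L(\chi,s)|\asymp_n C(\chi_s)$ once $C(\chi_s)$ is large, while $D_\ast^K\Phi\in\mathcal{S}(X(\A_F),K)$; hence $E(g,(D_\ast^K\Phi)_{\chi_s})=L(\chi,s)^K E(g,\Phi_{\chi_s})$. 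Applying the base bound to $\Psi=D_\ast^K\Phi$ with $Q=P$ (note $P$ also vanishes at the poles of $E(\cdot,(D_\ast^K\Phi)_{\chi_s})$, which are among those of $E(\cdot,\Phi_{\chi_s})$) gives $|L(\chi,s)^K P(s)E(g,\Phi_{\chi_s})|\ll C(\chi_s)^{\deg P}$, so $|P(s)E(g,\Phi_{\chi_s})|\ll C(\chi_s)^{\deg P-K}$ for $C(\chi_s)$ large; choosing $K=\deg P+N$ completes the proof. The main difficulty lies in the base bound, specifically in the ``bad strip'' $|\mathrm{Re}(s)|\le\tfrac{n+1}{2}$ where neither the series for $E(g,\Phi_{\chi_s})$ nor the one for its image under $M_{w_0}^*$ converges: this is where the a priori finite-order property of the analytically continued Eisenstein series is needed in order to invoke Phragm\'en--Lindel\"of, and one must also keep track that the exponent in the base bound is exactly $\deg Q$, independent of $\Psi$, so that the bootstrap produces decay of every order.
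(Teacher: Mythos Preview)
Your argument uses the same ingredients as the paper's proof --- absolute convergence of the Eisenstein series for $\mathrm{Re}(s)$ large (the paper's Lemma~\ref{lem:Eis:bound0}), the functional equation together with $\mathcal{F}(\Phi)\in\mathcal{S}(X(\A_F),K)$ to treat the left edge, the differential operators of Lemma~\ref{lem:DO} to produce powers of $C(\chi_s)$, and Phragm\'en--Lindel\"of --- but in a different order: the paper first differentiates on the boundary lines (this is Lemma~\ref{lem:Eis:bound}) to obtain $|E|\ll C(\chi_s)^{-N}$ there and then invokes Phragm\'en--Lindel\"of once, whereas you invoke Phragm\'en--Lindel\"of first for a polynomial ``base bound'' and then differentiate globally to bootstrap. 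Both orderings work.

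The one genuine gap is your justification of the finite-order hypothesis needed for Phragm\'en--Lindel\"of. You assert that ``polynomial growth of Eisenstein series in vertical strips is standard'' and that it ``can be extracted from the two-term constant term''; neither is right. Polynomial growth in the strip is essentially the conclusion you are after, so it cannot serve as input, and the constant term does not control the full Eisenstein series outside the region of absolute convergence. What is actually needed is the a~priori bound $|f(s)|\ll e^{c|s|^A}$, and for analytically continued Eisenstein series this is a nontrivial theorem: the paper invokes M\"uller \cite[Theorem~0.2]{Mueller:Residual}, which exhibits $E(g,\Phi_{\chi_s})$ as a quotient of two entire functions of finite order, and then \cite[\S 2.3, Lemma~1]{Gelbart:Lapid} to conclude that $P_0(s)E(g,\Phi_{\chi_s})$ is itself of finite order. (A minor quantitative point: for general $F$ one has $C(\chi_s)\asymp(1+|\mathrm{Im}(s)|)^{r}$ with $r$ the number of Archimedean places, not $\asymp 1+|\mathrm{Im}(s)|$, so the choice $K=\deg P+N$ in your bootstrap should be enlarged accordingly; this is harmless.)
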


\noindent Here in the theorem $C(\chi_s)$ is the analytic conductor of $\chi_s$. It is 
$$
|\OO/\mathfrak{f}_\chi|\prod_{v|\infty} C((\chi_{v})_s)\,,
$$
where $C((\chi_v)_{s})$ is defined as in \eqref{AC:def} for infinite places $v$ and $\mathfrak{f}_\chi \subset \OO$ is the usual conductor of $\chi$.
In fact, since $\Phi$ is fixed on the right by a compact open subgroup of $G(F)$ the function $\Phi_{\chi_s}$ vanishes identically if $\mathfrak{f}_{\chi}$ is sufficiently small.  Therefore the finite part $|\OO/\mathfrak{f}_\chi|$ of the conductor can be ignored in the proof below.

 It is convenient to first prove the following three lemmas:
\begin{lem} \label{lem:sum:bound} For $\Phi \in \mathcal{S}(X(\A_F),K)$ and $m \in M(\A_F)$ the sum
\begin{align} \label{Phi:bound0}
\sum_{\gamma \in X(F)}\Phi(m\gamma)
\end{align}
converges absolutely and uniformly on compact subsets of $M(\A_F)$.
\end{lem}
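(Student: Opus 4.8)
The plan is to reduce to a pure tensor and then dominate $|\Phi(m\gamma)|_{\mathrm{st}}$ by a function of $\gamma$ that is manifestly summable over $X(F)$, using the local estimates of Lemmas \ref{lem:bounded}, \ref{lem:b:bound}, and \ref{lem:int:by:parts} together with the product formula. Since $\mathcal{S}(X(\A_F),K)$ is a restricted tensor product of local Schwartz spaces, it suffices to treat $\Phi=\otimes_v\Phi_v$ with $\Phi_v=b_v$ for all $v$ outside a finite set. Fix a compact subset $C\subseteq M(\A_F)$, and enlarge the finite set $S$ of places so that it contains all archimedean places, all $v$ at which $\Phi_v\neq b_v$, all $v$ at which the estimate $|b_v(\gamma)|_{\mathrm{st}}\le|\gamma|^{-(n+1)/2-1}$ of Lemma \ref{lem:b:bound} (taken with $\varepsilon=1$) fails, and all $v$ for which some $m\in C$ has $m_v\notin M(\OO_v)$.

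Put $v_\gamma:=\mathrm{Pl}(\gamma)\in\wedge^nF^{2n}-\{0\}$. The map $\gamma\mapsto v_\gamma$ is injective by Lemma \ref{lem:inj}, and $\mathrm{Pl}(m\gamma)=\omega^{-1}(m)\mathrm{Pl}(\gamma)$ by \eqref{intertwine}, so $|m_v\gamma|_v=|\omega(m_v)|_v^{-1}|v_\gamma|_v$ at every $v$. Applying Lemma \ref{lem:bounded} at the finite places of $S$, Lemma \ref{lem:b:bound} at the finite places outside $S$, and Lemma \ref{lem:int:by:parts} with a large integer $N$ at the archimedean places, one gets, for all $m\in C$ and all $\gamma$ with $\Phi(m\gamma)\neq 0$, an estimate of the form
$$
|\Phi(m\gamma)|_{\mathrm{st}}\ \ll_N\ \Big(\prod_{v}|m_v\gamma|_v^{-(n+1)/2}\Big)\Big(\prod_{v|\infty}\max(1,|m_v\gamma|_v)^{-N}\Big)\Big(\prod_{v\nmid\infty,\ v\notin S}|m_v\gamma|_v^{-1}\Big).
$$
Moreover the support statements in Lemmas \ref{lem:bounded} and \ref{lem:b:bound} show that $\Phi(m\gamma)\neq 0$ forces $|m_v\gamma|_v\le R_v$ at each finite place $v$, with $R_v$ independent of $m\in C$ and $R_v=1$ for $v\notin S$; since $|v_\gamma|_v=|\omega(m_v)|_v\,|m_v\gamma|_v$ and $|\omega(m_v)|_v=1$ for $v\notin S$, all such $v_\gamma$ lie in one fixed lattice $\Lambda$ in the real vector space $\wedge^nF_\infty^{2n}$, independent of $m\in C$.

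We now clean up the three products by means of the product formula. For a single nonzero coordinate $x$ of $v_\gamma$ one has $|x|_v\ll_v|v_\gamma|_v$ at every $v$ (with implied constant $1$ at finite places), and $\prod_v|x|_v=1$; hence $\prod_v|v_\gamma|_v\ge c_F$ for some constant $c_F>0$. Therefore $\prod_v|m_v\gamma|_v=\big(\prod_v|\omega(m_v)|_v\big)^{-1}\prod_v|v_\gamma|_v$ is bounded below by a positive constant depending only on $C$, so the first product above is $\ll_C 1$; and, using $|m_v\gamma|_v\le R_v$ at the finite places of $S$, the third product is $\ll_C\prod_{v|\infty}|m_v\gamma|_v$, which is absorbed into the second product after enlarging $N$. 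Since $|m_v\gamma|_v\asymp_C\|(v_\gamma)_v\|_{\mathrm{st}}^{[F_v:\RR]}$ at archimedean $v$ for $m\in C$, we conclude that for every $N\ge 0$, uniformly for $m\in C$, and for those $\gamma$ with $v_\gamma\in\Lambda$ (the only ones that contribute),
$$
|\Phi(m\gamma)|_{\mathrm{st}}\ \ll_{C,N}\ \prod_{v|\infty}\bigl(1+\|(v_\gamma)_v\|_{\mathrm{st}}\bigr)^{-N}.
$$
Finally, $\prod_{v|\infty}\bigl(1+\|(v_\gamma)_v\|_{\mathrm{st}}\bigr)\ge 1+\max_{v|\infty}\|(v_\gamma)_v\|_{\mathrm{st}}\gg 1+\|v_\gamma\|$ for any fixed norm $\|\cdot\|$ on $\wedge^nF_\infty^{2n}$, so by injectivity of $\gamma\mapsto v_\gamma$,
$$
\sum_{\gamma\in X(F)}|\Phi(m\gamma)|_{\mathrm{st}}\ \ll_{C,N}\ \sum_{\lambda\in\Lambda}(1+\|\lambda\|)^{-N},
$$
which converges once $N>\dim_\RR\wedge^nF_\infty^{2n}$ by the usual lattice-point estimate; uniformity for $m\in C$ is contained in the constants.

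The step I expect to be the crux is the passage to the last two displays. The archimedean estimate of Lemma \ref{lem:int:by:parts} permits a factor $|m_v\gamma|_v^{-(n+1)/2}$ that blows up as $|m_v\gamma|_v\to 0$, and when $F$ has infinite unit group there are infinitely many $v_\gamma\in\Lambda$ with one archimedean component arbitrarily small, so one cannot afford to pair the archimedean decay against the finite-place support place by place. The remedy is to use the product formula to recycle the exponent $-(n+1)/2$ (and the $-1$ coming from the basic functions) into a single constant depending only on $m$; what survives is then of genuine Schwartz type and summable over the lattice. The residual bookkeeping — that $S$, the constants $R_v$, and the lattice $\Lambda$ may be chosen uniformly for $m$ in the fixed compact set $C$, and that $\prod_v|\omega(m_v)|_v$ stays bounded away from $0$ and $\infty$ there — is routine.
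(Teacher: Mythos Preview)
Your proof is correct and follows essentially the same route as the paper's: bound $|\Phi(m\gamma)|$ place by place via Lemmas~\ref{lem:bounded}, \ref{lem:b:bound}, \ref{lem:int:by:parts}, use the support conditions to force $\mathrm{Pl}(\gamma)$ into a fixed lattice, invoke the product formula to dispose of the exponent $-(n+1)/2-\varepsilon$, and then sum the residual archimedean decay over the lattice. Your treatment of the product-formula step is in fact more explicit than the paper's (which writes $\prod_v|\gamma|_v^{-(n+1)/2-\varepsilon}=1$ where only a one-sided inequality is available), but the substance is the same.
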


\begin{proof}
Assume without loss of generality that $K^\infty=\mathrm{Sp}_{2n}(\widehat{\OO})$.
Recall that we have defined $|g|_v=|\mathrm{Pl}(g)|_v$ for $g \in \mathrm{Sp}_{2n}(F_v)$, and 
this function is invariant under left multiplication by $N(F_v)$ and right 
multiplication by $K_v$ (see \eqref{norm:def}).
We set
$$
|g|:=\prod_{v}|g|_v\,.
$$
Let $\Omega \subset M(\A_F)$ be a compact set and let $m \in \Omega$.
By Lemmas \ref{lem:bounded}, \ref{lem:b:bound}, and \ref{lem:int:by:parts},
for any $A \in \ZZ_{\geq 0}$, $\varepsilon>0$ and $\gamma \in X(F)$ we have
\begin{align} \label{Phi:bound}
|\Phi(m\gamma)| &\ll_{\Phi,\Omega,A,\varepsilon} \prod_{v}\max(|\gamma|_v,1)^{-A}|\gamma|^{-(n+1)/2-\varepsilon}_v= \prod_{v}\max(|\gamma|_v,1)^{-A}\,. 
\end{align}
Using \eqref{Phi:bound}, Lemmas \ref{lem:bounded} and \ref{lem:b:bound}, and the Pl\"ucker embedding of \S \ref{ssec:Plucker} we deduce that \eqref{Phi:bound0} is bounded by 
$$
\sum_{\xi \in \mathfrak{N}^{-1}\wedge^n(\OO^{2n})} \prod_{v}\max(|\xi|_v,1)^{-A}\,.
$$
Here $\mathfrak{N} \in F^{\times}$ and $|\cdot|_v$ is the norm on $\wedge^nF_v^{2n}$ used to define $|\cdot|_v$ on $X(F_v)$.  It is easy to see that this sum is bounded for $A$ sufficiently large.
\end{proof}

\begin{lem} \label{lem:Eis:bound0} There is a constant $\beta_{F,n}$ depending only on $F$ and $n$ such that the sum defining $E(g,\Phi_{\chi_s})$ converges absolutely for all $\chi$ and $s$ with $\mathrm{Re}(s) >\beta_{F,n}$. For $\mathrm{Re}(s)=A>\beta_{F,n}$ one has
\begin{align}
|E(g,\Phi_{\chi_s})|_{\mathrm{st}} \ll_{\Phi,A} 1\,.
\end{align}
\end{lem}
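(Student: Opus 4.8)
The plan is to isolate two independent ingredients — a bound for the section $\Phi_{\chi_s}$ restricted to $K$ that is uniform in $\chi$ and in $\mathrm{Im}(s)$, and the absolute convergence of the resulting Eisenstein majorant, which is in essence a height zeta function — and then to combine them through the Iwasawa decomposition of the Eisenstein series. Since characters are unitary one has $|\chi_s(\omega(m))| = |\omega(m)|^{\mathrm{Re}(s)}$, so the dependence on $\chi$ will be harmless; the delicate point will be uniformity in $\mathrm{Im}(s)$.

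First I would bound $\Phi_{\chi_s}$ on $K$. Writing $\Phi = \otimes_v\Phi_v$, for $\mathrm{Re}(s)$ large the integral defining $\Phi_{\chi_s}$ factors as an Euler product $\Phi_{\chi_s}(k) = \prod_v(\Phi_v)_{(\chi_v)_s}(k_v)$, the local integrals converging for $\mathrm{Re}(s) > 0$ by the pointwise bounds $|\Phi_v(h)| \ll_{\Phi_v}|h|_v^{-(n+1)/2}$ coming from Lemmas~\ref{lem:bounded} and~\ref{lem:int:by:parts}. Outside a fixed finite set $S$ of places (the archimedean places together with those at which $\Phi_v \neq b_v$, enlarged to absorb the places where $F$ is ramified) one has $\Phi_v = b_v$, so $(\Phi_v)_{(\chi_v)_s}(k_v) = d(s,\chi_v)$ when $\chi_v$ is unramified (Lemma~\ref{lem:basic:comp}) and $(\Phi_v)_{(\chi_v)_s} = 0$ otherwise; hence $\Phi_{\chi_s}$ vanishes identically unless $\chi$ is ramified only inside $S$, and in the remaining case $\Phi_{\chi_s}(k) = d^S(s,\chi)\prod_{v\in S}(\Phi_v)_{(\chi_v)_s}(k_v)$, where $d^S(s,\chi) = L^S(s+\tfrac{n+1}{2},\chi)\prod_{r=1}^{\lfloor n/2\rfloor}L^S(2s+n+1-2r,\chi^2)$. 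For $\mathrm{Re}(s)$ large enough that these partial Hecke $L$-functions lie in their half-plane of absolute convergence, each is bounded in absolute value by the corresponding value of $\zeta^S$, uniformly in $\chi$ and in $\mathrm{Im}(s)$. The finitely many factors at $v \in S$ I would handle place by place: at non-archimedean $v$, $(\Phi_v)_{(\chi_v)_s}$ is $d(s,\chi_v)$ times a Laurent polynomial in $q_v^{s}$ (good section), visibly bounded on the line $\mathrm{Re}(s) = A > 0$; at archimedean $v$, the definition of an excellent section applied with $w = \mathrm{Id}$ — for which $a_{\mathrm{Id}} = d$, which has no poles for $\mathrm{Re}(s) > -\tfrac12$, so one may take $P_{\mathrm{Id}} \equiv 1$ — gives $\sup_{\mathrm{Re}(s)=A}|(\Phi_v)_{(\chi_v)_s}(k_v)| < \infty$; and only finitely many ramified $\chi_v$ can occur. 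Passing from finitely many $k_v$ to all of $K$ via right $K$-finiteness of $\Phi$, one obtains $\sup_{k\in K}|\Phi_{\chi_s}(k)| \ll_{\Phi,A} 1$ for $\mathrm{Re}(s) = A$ large, uniformly in $\chi$.

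Next I would bound the Eisenstein series. For $\gamma \in P(F)\backslash\mathrm{Sp}_{2n}(F)$, choosing a lift $\tilde\gamma \in \mathrm{Sp}_{2n}(F)$ and writing $\tilde\gamma g = n_\gamma m_\gamma k_\gamma$ (Iwasawa with respect to $P$) gives $\Phi_{\chi_s}(\tilde\gamma g) = \delta_P(m_\gamma)^{1/2}\chi_s(\omega(m_\gamma))\Phi_{\chi_s}(k_\gamma)$, so by the previous step $|\Phi_{\chi_s}(\tilde\gamma g)| \ll_{\Phi,A}\delta_P(m_\gamma)^{1/2}|\omega(m_\gamma)|^{A} = |\omega(m_\gamma)|^{A+(n+1)/2}$. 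Since $\mathrm{Pl}$ is left $N$-invariant with $\mathrm{Pl}(mx) = \omega^{-1}(m)\mathrm{Pl}(x)$, and since $|\cdot|_v$ is $K_v$-invariant at every place, $|\omega(m_\gamma)|^{-1}$ equals a fixed positive constant times $\prod_v|\mathrm{Pl}(\tilde\gamma g)|_v$, which is $\asymp_g$ the exponential height $H$ of the corresponding point of $\mathbb{P}(\wedge^n\GG_a^{2n})(F)$; this height is $\geq 1$, it is well-defined on $P(F)\backslash\mathrm{Sp}_{2n}(F)$ by the product formula, and $\gamma \mapsto [\mathrm{Pl}(\tilde\gamma)]$ is injective (the Pl\"ucker embedding of the Lagrangian Grassmannian is a closed immersion). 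Hence
\[
\sum_{\gamma \in P(F)\backslash\mathrm{Sp}_{2n}(F)}|\Phi_{\chi_s}(\tilde\gamma g)| \ll_{\Phi,A,g} \sum_{x \in \mathbb{P}(\wedge^n\GG_a^{2n})(F)} H(x)^{-(A+(n+1)/2)},
\]
and by the Schanuel/Northcott count $\#\{x : H(x) \leq R\} \ll_F R^{\binom{2n}{n}[F:\QQ]}$ the right-hand side converges as soon as $A + \tfrac{n+1}{2} > \binom{2n}{n}[F:\QQ]$. Taking $\beta_{F,n}$ to be any constant exceeding $\binom{2n}{n}[F:\QQ] - \tfrac{n+1}{2}$ (which is automatically positive, so the first step applies) yields at once the absolute convergence of the Eisenstein series for $\mathrm{Re}(s) > \beta_{F,n}$ and the bound $|E(g,\Phi_{\chi_s})|_{\mathrm{st}} \ll_{\Phi,A} 1$ on $\mathrm{Re}(s) = A > \beta_{F,n}$, uniformly in $\chi$ and $\mathrm{Im}(s)$, and uniformly for $g$ in any fixed compact set (in particular for $g \in K$, where the constant may be taken independent of $g$).

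The main obstacle is the bound at the bad archimedean places that is uniform as $\mathrm{Im}(s) \to \infty$: this is exactly what the notion of \emph{excellent} section was designed to supply, and a plain good section would not be enough here. Everything else — the Euler factorization, the domination of $d^S(s,\chi)$ by a product of $\zeta^S$-values, the reduction of the supremum over $K$ to finitely many evaluations, and the height count — is routine. (The estimate cannot be uniform in $g$ over all of $\mathrm{Sp}_{2n}(\A_F)$, since degenerate Eisenstein series grow toward the cusp; this is why it is stated and used with $g$ confined to a bounded region.)
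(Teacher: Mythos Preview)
Your argument is correct, and it follows a genuinely different route from the paper's. The paper does not bound $\Phi_{\chi_s}$ on $K$ at all; instead it unfolds the Mellin transform, writing
\[
|E(I_{2n},\Phi_{\chi_s})|_{\mathrm{st}} \leq \int_{M^{\mathrm{ab}}(F)\backslash M^{\mathrm{ab}}(\A_F)}|\omega(m)|^{A}\delta_P(m)^{1/2}\sum_{\gamma\in X(F)}|\Phi(m^{-1}\gamma)|\,dm,
\]
and then controls the inner sum by the pointwise bounds on $\Phi$ itself (Lemmas~\ref{lem:bounded}, \ref{lem:b:bound}, \ref{lem:int:by:parts}), after which the Pl\"ucker map is used to dominate the sum over $X(F)$ by a sum over the lattice $\mathfrak{N}'^{-1}\wedge^n\OO^{2n}$. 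The remaining integral over $A_{\GG_m}$ is split by hand into three pieces and shown to converge for $A$ large. So the paper's proof is a direct lattice-point estimate in the \emph{affine} Pl\"ucker space, entirely self-contained and never invoking heights or Schanuel.

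Your approach trades that explicit computation for two standard black boxes: the excellent-section bound on $K$ (which isolates the local analytic content) and the Schanuel/Northcott count on $\mathbb{P}(\wedge^n\GG_a^{2n})(F)$ (which isolates the global arithmetic content). This is cleaner conceptually and makes transparent why the exponent $\beta_{F,n}$ depends only on $F$ and $n$. The cost is an appeal to Schanuel and a somewhat wasteful height bound (you count all projective points rather than just those on the Lagrangian Grassmannian), but since the lemma only asks for \emph{some} $\beta_{F,n}$, this is harmless. One small remark: the paper absorbs the $g$-dependence by replacing $\Phi$ with $R(g)\Phi$ at the outset, which is why the stated implied constant carries only $\Phi$ and $A$; your $\asymp_g$ achieves the same thing.
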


\begin{proof}
Replacing $\Phi$ by $R(g)\Phi$ and $K$ by $gKg^{-1}$ we see that it suffices to prove the lemma in the case $g=I_{2n}$. 
One has
\begin{align*}
&|E(I_{2n},\Phi_{\chi_s})|_{\mathrm{st}} \\&\leq \sum_{\gamma \in P(F) \backslash \mathrm{Sp}_{2n}(F)} |\Phi_{\chi_s}(\gamma)|_{\mathrm{st}}\\
&\leq \sum_{\gamma \in P(F) \backslash \mathrm{Sp}_{2n}(F)}\int_{M^{\mathrm{ab}}(\A_F^\times)} \delta_P^{1/2}(m)|\omega(m)|^A|\Phi(m^{-1}\gamma)|dm \\
&=\int_{F^\times \backslash \A_F^\times}^{+}\sum_{\gamma \in X(F)} |x|^{-\tfrac{n+1}{2}-A}|\Phi(1(x)^{-1}\gamma )| dx^\times+\int_{F^\times \backslash \A_F^\times}^- \sum_{\gamma \in X(F)} |x|^{-\tfrac{n+1}{2}-A}|\Phi(1(x)^{-1}\gamma )| dx^\times \,,
\end{align*}
here $\int^+$ denotes the integral over $x$ with $|x|>1$ and $\int^-$ denotes the integral over $x$ with $|x|\leq 1$.  For $B_\pm \in \ZZ_{\geq 0}$ this is bounded by a constant depending on $\Phi, B_\pm$ times the sum of the two integrals
\begin{align*}
& \int^{\pm}_{F^\times \backslash \A_F^\times} \sum_{\gamma \in X(F)} \one_{\mathfrak{N}^{-1}\wedge^n\widehat{\OO}^{2n}}(\mathrm{Pl}(1(x)^{-1}\gamma))|x|^{-\tfrac{n+1}{2}-A}\prod_{v}\max(|1(x)^{-1}\gamma|_v,1)^{-B_\pm}|1(x)^{-1}\gamma|^{-(n+1)/2-\varepsilon}_v dx^\times
\end{align*}
for some $\mathfrak{N} \in \OO \cap F^\times$ by
Lemmas \ref{lem:bounded}, \ref{lem:b:bound}, and  \ref{lem:int:by:parts}.

One checks using the definition of $\mathrm{Pl}$ that that $|1(x)^{-1}\gamma|_v=|x|^{-1}|\gamma|_v$.  In particular each summand is invariant as a function of $x$ under multiplication by $\widehat{\OO}^{\times}$.  Let $(\A_F^\times)^1:=\mathrm{ker} |\cdot|$.  Choosing a compact measurable fundamental domain for 
$F^\times \backslash (\A_F^\times)^1/\widehat{\OO}^\times$ and integrating over it we see that the above is
\begin{align*}
&\int_{F^\times \backslash \A_F^\times}^{\pm} \sum_{\gamma \in X(F)} \one_{\mathfrak{N}^{-1}\wedge^n \widehat{\OO}^{2n}}(\mathrm{Pl}(1(x)^{-1}\gamma))|x|^{-A+\varepsilon}\prod_{v}\max(|1(x)^{-1}\gamma|_v,1)^{-B_\pm}dx^\times \\
&\ll \int_{A_{\GG_m}}^{\pm} \sum_{\gamma \in X(F)} \one_{\mathfrak{N}^{-1}\wedge^n \widehat{\OO}^{2n}}(\mathrm{Pl}(\gamma))|x|^{-A+\varepsilon}\prod_{v}\max(|1(x)^{-1}\gamma|_v,1)^{-B_\pm}dx^\times\,.
\end{align*}
  We now employ the Pl\"ucker embedding to see that after this is bounded by
\begin{align} \label{stuff:to:bound}
\int_{A_{\GG_m}}^{\pm} \sum_{\delta \in \mathfrak{N}'^{-1}\wedge^{n}\OO^{2n} - \{0\} }|x|^{-A+\varepsilon}\prod_{v}\max(|x^{-1}\delta|_v,1)^{-B_\pm}dx^\times.
\end{align}
for some $\mathfrak{N}' \in F^\times$. 
For $y\in \wedge^nF_\infty^{2n}$ let
$$
||y||_\infty:=\max_{v|\infty}|y|_v\,.
$$
Then if $||y||_\infty \geq 1$ one has $\prod_{v}\max(|y|_v,1) \geq ||y||_\infty$ whereas if $||y||_\infty <1$ one has $ \prod_{v}\max(|y|_v,1)=1$.  Motivated by this we divide \eqref{stuff:to:bound} into two terms, namely the contribution of  $||x^{-1}\delta||_\infty \leq  1$ 
and the contribution of $||x^{-1}\delta||_\infty > 1$.
Thus the sum of the $\pm$ contributions of \eqref{stuff:to:bound} is bounded by the sum of the following three terms:
\begin{align} \label{int:to:bound231}
&\int_{0}^\infty \sum_{\substack{\delta \in \mathfrak{N}'^{-1}\wedge^{n}\OO^{2n} - \{0\}\\ ||\delta ||_\infty\leq
x^{[F:\QQ]^{-1}}} }x^{-A+\varepsilon}dx^\times\,,\\ \label{int:to:bound232:-}
&\int_{0}^1 \sum_{\substack{\delta \in \mathfrak{N}'^{-1}\wedge^{n}\OO^{2n} - \{0\}\\ ||\delta||_\infty >x^{[F:\QQ]^{-1}} } }x^{-A+\varepsilon+B_-[F:\QQ]^{-1}}||\delta||_\infty^{-B_-}dx^\times\,,\\
&\int_{1}^\infty \sum_{\substack{\delta \in \mathfrak{N}'^{-1}\wedge^{n}\OO^{2n} - \{0\}\\ ||\delta||_\infty >x^{[F:\QQ]^{-1}} } }x^{-A+\varepsilon+B_+[F:\QQ]^{-1}}||\delta||_\infty^{-B_+}dx^\times\,. \label{int:to:bound232:+}
\end{align}
So it suffices to prove that for $A$ sufficiently large we can choose $B_\pm$ so that these three terms are finite.

Now there is a constant $c>0$ so that for $\delta \in \mathfrak{N}^{-1}\wedge^{n}\OO^{2n}$ one has $||\delta||_{\infty} \leq c$ if and only if $\delta=0$.  Thus the integral in \eqref{int:to:bound231} has support in the range $x>c$ for this $c$.   Thus \eqref{int:to:bound231} is equal to 
\begin{align*}
\int_{c}^\infty \sum_{\substack{\delta \in \mathfrak{N}'^{-1}\wedge^{n}\OO^{2n} - \{0\}\\ ||\delta ||_\infty\leq
x^{[F:\QQ]^{-1}}} }x^{-A+\varepsilon}dx^\times\,,
\end{align*}
for sufficiently small $c>0$.  Moreover there is an $A'>0$ such that this is bounded by 
\begin{align*}
\int_{c}^\infty x^{-A+\varepsilon+A'}dx^\times\,,
\end{align*}
which is convergent for $A>\varepsilon+A'$.  Thus if $A$ is sufficiently large \eqref{int:to:bound231} is finite.  

As for the latter two terms, start by assuming $B_\pm $ is large enough that 
\begin{align*}
\sum_{\delta \in \mathfrak{N}'^{-1} \wedge^n\OO^{2n}-0}||\delta||_\infty^{-B_{\pm}}
\end{align*}
converges.  Under this assumption if $A>\varepsilon+B_+[F:\QQ]^{-1}$
\eqref{int:to:bound232:+} converges.  If necessary, we then increase the size of $B_-$ so that $A<\varepsilon+B_-[F;\QQ]^{-1}$; this will make \eqref{int:to:bound232:-} convergent.  We deduce that we can take
$$
\beta_{n,F}:=\max(\varepsilon+B_+[F:\QQ]^{-1},\varepsilon+A')\,.
$$
\end{proof}
 
\begin{lem} \label{lem:Eis:bound} Let $A>\beta_{F,n}$ where $\beta_{F,n}$ is the constant of Lemma \ref{lem:Eis:bound0}.  Then for any $N \in \ZZ_{ \geq 0}$ one has
$$
\mathrm{sup}_{\mathrm{Re}(s)=A} |E(g,\Phi_{\chi_s}) |\ll_{\Phi,A,N} C(\chi_s)^{-N}\,.
$$
\end{lem}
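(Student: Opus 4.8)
The plan is to reduce to Lemma \ref{lem:Eis:bound0} by using the Archimedean differential operators $D$ and $\bar{D}$ of \eqref{D} and \eqref{barD} to convert powers of the analytic conductor into applications of these operators. The underlying mechanism is Lemma \ref{lem:DO}: applied to the local component $\chi_v$ at an infinite place $v$ (and using that on pure tensors the global Mellin transform factors as a product of local Mellin transforms, so that $D_v$ acts only on the $v$-th factor), the operator $D_v$ multiplies $\Phi_{\chi_s}$ by an explicit linear polynomial in $s$, and likewise for $\bar{D}_v$ when $v$ is complex. On a vertical line $\mathrm{Re}(s)=A$ the modulus of that polynomial turns out to be comparable to the local analytic conductor $C((\chi_v)_s)$, provided one chooses $D_v$ versus $\bar{D}_v$ according to the sign of $\alpha_v$ at complex places. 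Since $E(g,\cdot)$ is linear in its section and these polynomials do not depend on $g$, each such application removes one factor of $C((\chi_v)_s)$ from an upper bound, and Lemma \ref{lem:Eis:bound0} supplies a conductor-free bound once enough has been removed.

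In detail, fix $N\in\ZZ_{\geq0}$ and write the Archimedean components of $\chi$ as $\chi_v=|\cdot|_v^{it_v}\mu_v^{\alpha_v}$ as in \eqref{explicit:chi}, with $\alpha_v\in\{0,1\}$ for $v$ real. For $v$ real put $E_v:=D_v$, and for $v$ complex put $E_v:=D_v$ if $\alpha_v\geq0$ and $E_v:=\bar{D}_v$ if $\alpha_v<0$. Then Lemma \ref{lem:DO} gives $(E_v\Phi)_{\chi_s}=q_v(s)\Phi_{\chi_s}$, where $q_v(s)=s+it_v+\tfrac{n+1}{2}$ for $v$ real and $q_v(s)=s+it_v+\tfrac{n+1}{2}\pm\tfrac{\alpha_v}{2}$ for $v$ complex (the sign matching the choice of $D_v$ or $\bar{D}_v$). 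Since $A>\beta_{F,n}>0$, the real part of $q_v$ is bounded below by a positive constant on $\mathrm{Re}(s)=A$; moreover, with the sign chosen as above, $q_v(s)$ differs from $s+it_v+\tfrac{|\alpha_v|}{[F_v:\RR]}$ by a real constant bounded in terms of $n$, so, since $C((\chi_v)_s)=1+\left|s+it_v+\tfrac{|\alpha_v|}{[F_v:\RR]}\right|_{\mathrm{st}}$, we get $|q_v(s)|\asymp_{A,n}C((\chi_v)_s)$ on that line. Set $\Phi':=\prod_{v|\infty}E_v^N\Phi$; this lies in $\mathcal{S}(X(\A_F),K)$ by the definition of the Schwartz space, satisfies $\Phi'_{\chi_s}=\big(\prod_{v|\infty}q_v(s)^N\big)\Phi_{\chi_s}$, and therefore
\[
E(g,\Phi'_{\chi_s})=\Big(\prod_{v|\infty}q_v(s)^N\Big)E(g,\Phi_{\chi_s}).
\]

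As $\chi$ varies, $\Phi'$ depends on $\chi$ only through the finitely many sign patterns $(\mathrm{sgn}\,\alpha_v)_v$ over the complex places, so $\Phi'$ ranges over a finite subset of $\mathcal{S}(X(\A_F),K)$; applying Lemma \ref{lem:Eis:bound0} to each member of this set and taking the largest of the implied constants gives $|E(g,\Phi'_{\chi_s})|\ll_{\Phi,A}1$ on $\mathrm{Re}(s)=A$, uniformly in $\chi$. Combining this with the displayed identity and the comparison $|q_v(s)|\asymp_{A,n}C((\chi_v)_s)$ yields
\[
|E(g,\Phi_{\chi_s})|\ll_{\Phi,A,N}\prod_{v|\infty}C((\chi_v)_s)^{-N}=|\OO/\mathfrak{f}_\chi|^{N}C(\chi_s)^{-N}
\]
on $\mathrm{Re}(s)=A$. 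Finally, by the remark following Theorem \ref{thm:Eis:bound}, $\Phi_{\chi_s}$ — and hence $E(g,\Phi_{\chi_s})$ — vanishes unless $|\OO/\mathfrak{f}_\chi|$ is bounded in terms of $\Phi$, so the factor $|\OO/\mathfrak{f}_\chi|^{N}$ can be absorbed into the implied constant, which completes the proof. The one point requiring genuine care is the comparison $|q_v(s)|\asymp_{A,n}C((\chi_v)_s)$ at complex places: one really must pass to $\bar{D}_v$ when $\alpha_v<0$, since whether $C((\chi_v)_s)=1+\left|s+it_v+|\alpha_v|/2\right|_{\mathrm{st}}$ grows in the $+\alpha_v$ or the $-\alpha_v$ direction depends on $\mathrm{sgn}\,\alpha_v$, while $D_v$ alone controls only the former.
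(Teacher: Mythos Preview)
Your proof is correct and follows essentially the same approach as the paper's own proof: both leverage Lemma \ref{lem:DO} to convert applications of the Archimedean operators $D_v$ (and $\bar{D}_v$ at complex places, with the sign of $\alpha_v$ dictating the choice) into multiplication of $\Phi_{\chi_s}$ by linear factors comparable to $C((\chi_v)_s)$, and then appeal to Lemma \ref{lem:Eis:bound0}. The paper phrases this as an induction on $N$ rather than applying $\prod_{v|\infty}E_v^N$ in one shot, and is terser about the sign-pattern dependence and the non-Archimedean conductor, but the substance is the same.
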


\begin{proof}
We proceed as in the Riemann-Lebesgue lemma, leveraging the smoothness of $\Phi_v$
for $v|\infty$ the form of Lemma \ref{lem:DO} to obtain bounds on the Mellin transform $\Phi_{\chi_s}$ and hence $E(g,\Phi_{\chi_s})$.

For every $v|\infty$ let $D_v$ be defined as in \eqref{D} and if $v$ is complex let $\bar{D}_v$ be defined as in \eqref{barD}.  If $\chi_v=|\cdot|\mu^\alpha$ as in \eqref{explicit:chi} then using the same computation as in the proof of Lemma \ref{lem:DO} we have
$$
\left|\left(\frac{([F:\RR]-1)\alpha}{2}+it+s+\tfrac{n+1}{2}\right)E(g,\Phi_{\chi_s})\right|_{\mathrm{st}}=\left|E(g,(D_v\Phi)_{\chi_s}) \right|_{\mathrm{st}} \ll_{D_v\Phi,A} 1\,.
$$
If we replace $\alpha$ by $-\alpha$ and $D_v$ by $\bar{D}_v$ when $v$ is complex the same inequality and bound are valid.  In view of the definition \eqref{AC:def} of the analytic conductor we deduce the lemma by induction.
\end{proof}

\begin{proof}[Proof of Theorem \ref{thm:Eis:bound}]
The main idea here is to use the Phragmen-Lindel\"of principle.    This is complicated by the same difficulties as those overcome in \cite{Gelbart:Shahidi:JAMS}. We adapt the simplification of their argument given in \cite{Gelbart:Lapid}.

Let $V \subset \CC$ be a simply connected open subset.  An entire function $f: V \to \CC$ is said to be of finite order $A \geq 0$ if 
$$
|f(z)| \ll e^{c|z|^A}
$$
for some $c \in \RR_{>0}$.  
By \cite[Theorem 0.2]{Mueller:Residual} for each fixed $g$ the function $E(g,\Phi_{\chi_s})$ is the quotient of two functions of finite order.  

By Theorem \ref{thm:Ikeda} there is a polynomial $P_0 \in \CC[x]$ satisfying $P_0(-x)=P_0(x)$, independent of 
$\chi$, $\Phi$, and $g$, such that 
$$
P_0(s)E(g,\Phi_{\chi_s})
$$
is holomorphic as a function of $s$.  It is therefore a function of finite order by \cite[\S 2.3 Lemma 1]{Gelbart:Lapid}.
For real numbers $A<B$ let
$$
V_{A,B}:=\{s:A \leq  \mathrm{Re}(s) \leq B\}.
$$ 
We take $A<-\beta_{F,n}$ and $B>\beta_{F,n}$.  Then by Lemma \ref{lem:Eis:bound}
\begin{align} \label{bound:A}
|E(g,\Phi_{\chi_s})|_{\mathrm{st}} &\ll_N C(\chi_s)^{-N} \,,
\end{align}
for $\mathrm{Re}(s)=B$.
By the functional equation of Theorem \ref{thm:Ikeda} one hs $E(g,\Phi_{\chi_s})=E(g,M^*_{w_0}(\Phi_{\chi_s}))$.  Moreover by Theorem \ref{thm:FT} one has $E(g,M^*_{w_0}(\Phi_{\chi_s}))=E(g,\mathcal{F}(\Phi)_{\bar{\chi}_{-s}})$ for $\mathrm{Re}(s)=A$.  Thus we can apply Lemma \ref{lem:Eis:bound} to $E(g,\mathcal{F}(\Phi)_{\bar{\chi}_{-s}})$ to deduce
\begin{align} \label{bound:B}
|E(g,\Phi_{\chi_s}))|_{\mathrm{st}}=| E(g,M_{w_0}^*(\Phi_{\chi_s}))|_{\mathrm{st}}=|E(g,\mathcal{F}(\Phi)_{\bar{\chi}_{-s}})|_{\mathrm{st}} &\ll C(\bar{\chi}_{-s})^{-N}\,,
\end{align}
for $\mathrm{Re}(s)=A$ (note that $A<-\beta_{F,n}$).
Since $\Phi$ is $K$-finite for any $\chi$ with  $\Phi_{\chi_s} \neq 0$ we have $C(\bar{\chi}_{-s}) \asymp C(\chi_s)$ on the line $\mathrm{Re}(s)=A$.

Applying the Phragmen-Lindel\"of principle (in the form of \cite[\S III.4, Theorem 11]{Moreno}, for example) the bounds \eqref{bound:A} and \eqref{bound:B} on the edge of the vertical strip imply analogous bounds on its interior and we deduce the theorem.
\end{proof}

Abbreviate $E(\Phi_{\chi_{s}})=E(I_{2n},\Phi_{\chi_{s}})$.  Let 
$$
\kappa_F:=\mathrm{Res}_{s=1}\zeta_F(s)\,.
$$
The main theorem of this paper is the following:
\begin{thm} \label{thm:main}
Let $\Phi \in \mathcal{S}(X(\A_F),K)$.  One has
\begin{align*}
&\sum_{\gamma \in X(F)}\Phi(\gamma)+
\frac{1}{\kappa_F}\sum_{\substack{
0 \leq m <\frac{n+1}{2} \\m \in \mathbb{Z}}}\mathrm{Res}_{s= \frac{n+1}{2}-m }E(\mathcal{F}(\Phi)_{1_s})+\frac{1}{\kappa_F}\sum_{\substack{\chi \in \widehat{[\GG_m]} \\\chi \neq 1, \chi^2=1}}
\sum_{\substack{
0 \leq m <\frac{n-1}{2} \\m \in \mathbb{Z}}}
\mathrm{Res}_{s=\frac{n-1}{2}-m }E(\mathcal{F}(\Phi)_{\chi_s})
\\
&=\sum_{\gamma \in X(F)} \mathcal{F}(\Phi)(\gamma)+
\frac{1}{\kappa_F}\sum_{\substack{
0 \leq m <\frac{n+1}{2} \\m \in \mathbb{Z}}}\mathrm{Res}_{s= \frac{n+1}{2}-m }E(\Phi_{1_s})+\frac{1}{\kappa_F}\sum_{\substack{\chi \in \widehat{[\GG_m]} \\\chi \neq 1, \chi^2=1}}
\sum_{\substack{0 \leq m <\frac{n-1}{2} \\m \in \mathbb{Z}}}
\mathrm{Res}_{s=\frac{n-1}{2}-m }E(\Phi_{\chi_s})\,.
\end{align*}
All of the sums here are absolutely convergent.  
\end{thm}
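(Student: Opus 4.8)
The plan is to make the formal argument sketched in the introduction rigorous, using the analytic control developed in \S\,\ref{sec:control}. First I would record two reductions: since $\Phi$ is right $K$-finite, only finitely many $\chi\in\widehat{[\GG_m]}$ have $\Phi_{\chi_s}\neq 0$ (the conductor $\mathfrak f_\chi$ must divide a fixed ideal and the archimedean types are bounded), so every sum over $\chi$ below is a finite sum; and by Lemma~\ref{lem:sum:bound} applied with $m=I_{2n}$ the sum $\sum_{\gamma\in X(F)}\Phi(\gamma)$ is absolutely convergent, as is $\sum_{\gamma\in X(F)}\mathcal F(\Phi)(\gamma)$ since $\mathcal F(\Phi)\in\mathcal S(X(\A_F),K)$. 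Unfolding the definition of $E(\Phi_{\chi_s})$ and applying Mellin inversion on $F^\times\backslash\A_F^\times$ (the adelic analogue of Lemma~\ref{lem:Mellin:inv}, with the measure normalizations of \S\,\ref{sec:notat}, which is where the constant $\kappa_F=\mathrm{Res}_{s=1}\zeta_F(s)$ enters) gives, for $\sigma$ sufficiently large (in particular $\sigma>\beta_{F,n}$ of Lemma~\ref{lem:Eis:bound0}),
\[
\sum_{\gamma\in X(F)}\Phi(\gamma)=\sum_{\chi}\frac{1}{2\pi i\,\kappa_F}\int_{\mathrm{Re}(s)=\sigma}E(\Phi_{\chi_s})\,ds ,
\]
the right-hand side converging absolutely: Lemma~\ref{lem:Eis:bound0} gives absolute convergence of $E(\Phi_{\chi_s})$ on $\mathrm{Re}(s)=\sigma$, and Lemma~\ref{lem:Eis:bound} gives $|E(\Phi_{\chi_s})|\ll_{\Phi,\sigma,N}C(\chi_s)^{-N}\ll(1+|\mathrm{Im}(s)|)^{-N}$ for every $N$, which is integrable over the line.

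Next I would shift the contour from $\mathrm{Re}(s)=\sigma$ to $\mathrm{Re}(s)=-\sigma$. By Theorem~\ref{thm:Ikeda}, $E(\Phi_{\chi_s})$ has at worst simple poles, which occur only when $\chi^2=1$ and then lie at $s=\pm(\tfrac{n+1}{2}-m)$ (if $\chi=1$) or $s=\pm(\tfrac{n-1}{2}-m)$ (if $\chi\neq1$) for $m$ in the indicated ranges; in particular $s=0$ is never a pole, so for $\sigma$ large the open strip $|\mathrm{Re}(s)|<\sigma$ meets the polar set in exactly these points together with their negatives. Theorem~\ref{thm:Eis:bound} supplies the uniform decay needed to discard the horizontal parts of the contour: if $P$ vanishes at the poles then $|E(\Phi_{\chi_s})|_{-\sigma,\sigma,P}\ll_N C(\chi_s)^{-N}$, so $E(\Phi_{\chi_s})$ is rapidly decreasing in the strip, uniformly away from a neighborhood of the poles. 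The residue theorem then yields
\[
\sum_{\gamma\in X(F)}\Phi(\gamma)=\frac{1}{\kappa_F}\sum_{\chi}\sum_{s_0}\mathrm{Res}_{s=s_0}E(\Phi_{\chi_s})+\sum_{\chi}\frac{1}{2\pi i\,\kappa_F}\int_{\mathrm{Re}(s)=-\sigma}E(\Phi_{\chi_s})\,ds ,
\]
with $s_0$ running over the finitely many poles in $|\mathrm{Re}(s)|<\sigma$.

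It remains to identify the two pieces. For the shifted integral, the substitution $s\mapsto -s$, the functional equation $E(\Phi_{\chi_s})=E(M^*_{w_0}\Phi_{\chi_s})$ of Theorem~\ref{thm:Ikeda}, and the defining relation $\mathcal F(\Phi)_{\bar\chi_{-s}}=M^*_{w_0}\Phi_{\chi_s}$ of the Fourier transform (Theorem~\ref{thm:FT} and the diagram after \eqref{FT}) give $\int_{\mathrm{Re}(s)=-\sigma}E(\Phi_{\chi_s})\,ds=\int_{\mathrm{Re}(s)=\sigma}E(\mathcal F(\Phi)_{\bar\chi_s})\,ds$; re-indexing the finite sum $\chi\mapsto\bar\chi$ and applying the first step to $\mathcal F(\Phi)$ turns $\sum_\chi\frac{1}{2\pi i\,\kappa_F}\int_{\mathrm{Re}(s)=-\sigma}E(\Phi_{\chi_s})\,ds$ into $\sum_{\gamma\in X(F)}\mathcal F(\Phi)(\gamma)$. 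For the residues, I would pair each pole $s_0$ with its negative: the same substitution together with the functional equation gives $\mathrm{Res}_{s=-s_0}E(\Phi_{\chi_s})=-\mathrm{Res}_{s=s_0}E(\mathcal F(\Phi)_{\bar\chi_s})$, so after re-indexing $\chi\mapsto\bar\chi$ the contribution of the poles with $\mathrm{Re}(s)<0$ equals $-\frac{1}{\kappa_F}\sum_\chi\sum_{s_0>0}\mathrm{Res}_{s=s_0}E(\mathcal F(\Phi)_{\chi_s})$. Substituting both identifications into the display above and moving the $\mathcal F(\Phi)$-residue term to the other side produces exactly the asserted identity, the ranges of $m$ and of $\chi$ being those of Theorem~\ref{thm:Ikeda}; absolute convergence of all its sums is then immediate, the residue sums being finite and the two $\gamma$-sums being handled in the first step.

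Granting the inputs quoted above, the points that require care are the adelic Mellin inversion of the first step -- arranging the measures so that the constant is precisely $\kappa_F$, and checking that the unfolding of the Eisenstein series is compatible with Fourier/Mellin inversion on $F^\times\backslash\A_F^\times$ -- and the bookkeeping of poles in the last step, where the sign produced by $s\mapsto -s$ and the disjointness of the positive and negative polar families are exactly what make the residue terms land on the correct sides of the identity. The genuinely hard analytic work is, however, already packaged in Theorem~\ref{thm:Eis:bound}: it is there that the Phragmen--Lindel\"of principle and the analytic control of the Schwartz space are combined to produce the uniform decay in vertical strips that legitimizes the contour shift, and in this proof I would simply invoke it.
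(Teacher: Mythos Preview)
Your approach is the same as the paper's: express $\sum_\gamma \Phi(\gamma)$ via Mellin/Poisson inversion on $M^{\mathrm{ab}}(F)\backslash M^{\mathrm{ab}}(\A_F)$ as a sum of integrals of $E(\Phi_{\chi_s})$, shift the contour using Theorem~\ref{thm:Eis:bound}, identify the shifted integral with $\sum_\gamma\mathcal F(\Phi)(\gamma)$ via the functional equation and the defining relation $\mathcal F(\Phi)_{\chi_s}=M_{w_0}^*\Phi_{\bar\chi_{-s}}$, and sort the residues. The bookkeeping you describe for the residues (pairing $s_0$ with $-s_0$ and using $\mathrm{Res}_{s=-s_0}E(\Phi_{\chi_s})=-\mathrm{Res}_{s=s_0}E(\mathcal F(\Phi)_{\bar\chi_s})$) is correct and is what the paper means by ``write the contribution of the residues in the more symmetric form.''

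There is one genuine slip. Your opening claim that only finitely many $\chi\in\widehat{[\GG_m]}$ satisfy $\Phi_{\chi_s}\neq 0$ is false whenever $F$ has more than one archimedean place. Bounding the conductor and the $K_{\GG_m}$-type only pins $\chi$ down up to a character of $[\GG_m]/\overline{K_{\GG_m}F^\times}$, and that quotient has a torus of dimension $r_1+r_2-1$; its dual is an infinite lattice (concretely, these are the characters $\prod_{v\mid\infty}|\cdot|_v^{it_v}$ with $\sum_v[F_v:\RR]t_v=0$ and the $t_v$ in a lattice determined by the regulator). For every such $\chi$ one has $\Phi_{\chi_s}\neq 0$ for generic $s$. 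The paper does not make your finiteness claim; instead it observes that only finitely many $K_{\GG_m}$-types occur, and then uses the bound $|E(\Phi_{\chi_s})|\ll_N C(\chi_s)^{-N}$ from Lemma~\ref{lem:Eis:bound}/Theorem~\ref{thm:Eis:bound} together with the elementary fact that $\sum_\chi\int_{\mathrm{Re}(s)=\sigma}C(\chi_s)^{-A}\,ds<\infty$ for $A$ large (the sum being over $\chi$ with $K_{\GG_m}$-type in a fixed finite set). You already invoke the conductor bound for the $s$-integral; you just need to use it for the $\chi$-sum as well rather than appealing to a finiteness that does not hold. Note that your residue sums \emph{are} finite, since $\chi^2=1$ with bounded conductor leaves only finitely many $\chi$; the issue is solely with the Mellin-inversion step.
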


\begin{proof}

For $m \in M(\A_F)$ the adelic Mellin transform of $\sum_{\gamma \in X(F)} \Phi(m^{-1}\gamma)$ evaluated at $\chi_s$ is $E(\Phi_{\chi_s})$:
\begin{align}
E(\Phi_{\chi_s})=\int_{M^{\mathrm{ab}}(F) \backslash M^{\mathrm{ab}}(\A_F)}\sum_{\gamma \in X(F)}\chi_s(m)\delta_P(m)^{1/2}\Phi(m^{-1}\gamma)dm\,.
\end{align}
By the argument proving Lemma \ref{lem:Eis:bound0}, the integral and sum are absolutely convergent for $\mathrm{Re}(s)>\beta_{F,n}$.

Applying Poisson summation in $F^\times$ we see that
\begin{align*}
\sum_{\gamma \in X(F)} \Phi(\gamma)
&=\sum_{\chi}\frac{1}{2\pi i\kappa_F}
\int_{\mathrm{Re}(s)=\sigma}E(\Phi_{\chi_{s}})ds\\
&=\sum_{\chi}\frac{1}{2\pi i\kappa_F}
\int_{\mathrm{Re}(s)=\sigma}E(M_{w_0}^*(\Phi_{\chi_{s}}))ds\,,
\end{align*}
for $\sigma>\beta_{F,n}$.  Here the sum on $\chi$ is over $\widehat{[\GG_m]}$.  
A convenient reference for this application of Poisson summation is \cite[\S 2]{Blomer_Brumley_Ramanujan_Annals}.  In view of Lemma \ref{lem:sum:bound} to justify the application it suffices to check that 
\begin{align} \label{to:bound0}
\sum_{\chi}
\int_{\mathrm{Re}(s)=\sigma}|E(M_{w_0}^*(\Phi_{\chi_{s}}))|ds
\end{align}
is finite.
Let $K_{\GG_m} \leq \A_F^\times$ be the maximal compact subgroup.  There is a finite set of $K_{\GG_m}$-types such that all characters contributing a nonzero summand to \eqref{to:bound0} have $K_{\GG_m}$-type in that set.  
On the other hand, one can readily check that for large enough $A>0$
$$
\sum_{\chi} \int_{\mathrm{Re}(s)=\sigma} C(\chi_s)^{-A}<\infty\,,
$$
where the sum is over all characters $\chi$ whose $K_{\GG_m}$ type lies in a fixed finite set of $K_{\GG_m}$-types.  With this in mind Theorem \ref{thm:Eis:bound} implies that \eqref{to:bound0} is finite.

We now shift the $s$ contour to $-\sigma$.  We arrive at the sum of
\begin{align} \label{no:residues0}
\sum_{\chi}\frac{1}{2\pi i\kappa_F}
\int_{\mathrm{Re}(s)=-\sigma}E(M_{w_0}^*(\Phi_{\chi_{s}}))ds
\end{align}
and the contribution of the residues:
\begin{align} \label{residues}
&\frac{1}{\kappa_F}\sum_{
\substack{0 \leq m <\frac{n+1}{2} \\ m\in \mathbb{Z}}}\mathrm{Res}_{s=\pm \left(\frac{n+1}{2}-m \right)}E(M^*_{w_0}(\Phi_{1_{s}}))\\&+\frac{1}{\kappa_F}\sum_{\substack{\chi \in \widehat{[\GG_m]} \\ \chi \neq 1, \chi^2=1}}
\sum_{\substack{0 \leq m <\frac{n-1}{2} \\ m\in \mathbb{Z}}}
\mathrm{Res}_{s=\pm \left( \frac{n-1}{2}-m \right)}E(M^*_{w_0}(\Phi_{\chi_{s}}))\,. \nonumber
\end{align}
The characters contributing a nonzero summand are all quadratic or trivial and they all have conductor dividing an ideal depending only on $\Phi$.  Thus the sums in \eqref{residues} are in fact finite.

In \eqref{no:residues0} we change variables $\chi \mapsto \bar{\chi}$ and $s \mapsto -s$ to arrive at 
\begin{align} \label{no:residues}
\sum_{\chi}\frac{1}{2\pi i\kappa_F}
\int_{\mathrm{Re}(s)=\sigma}E(M_{w_0}^*(\Phi_{\bar{\chi}_{-s}}))ds\,.
\end{align}

Reversing the application of Poisson summation we see that \eqref{no:residues} is equal to 
\begin{align}
\sum_{\gamma \in X(F)} \mathcal{F}(\Phi)(\gamma)\,.  
\end{align}
This is again absolutely convergent by Lemma \ref{lem:sum:bound}.

To complete the proof we now write the contribution of the residues \eqref{residues} in the more symmetric form stated in the theorem using the fact that $\mathcal{F}(\Phi)_{\chi_s}=M_{w_0}^*(\Phi_{\bar{\chi}_{-s}})$ and Theorem \ref{thm:Ikeda}.
\end{proof}

\appendix

\section{Proof of Proposition \ref{prop:compact} in the Archimedean case} \label{App}

Assume that $F$ is an Archimedean local field.  
Recall that $C_c^\infty(X(F),K) < C_c^\infty(X(F))$
is the subset of functions that are right $K$-finite.  
Our goal here is to prove Proposition \ref{prop:compact} in the current Archimedean setting.  We recall that Proposition \ref{prop:compact} simply states that $C_c^\infty(X(F),K) \leq \mathcal{S}(X(F),K)$. 

It is convenient to begin with a few convergence lemmas:
\begin{lem}  \label{lem:abs:conv1} Let $\Phi \in C_c^\infty(X(F),K)$.  Then for $\mathrm{Re}(s) \geq n^2/2$ one has
$$
|M_{w_0}\Phi_{\chi_s}(I_{2n})| \ll_{\Phi} 1\,.
$$
\end{lem}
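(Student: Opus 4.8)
The plan is to bound $\Phi_{\chi_s}$ pointwise for $\Phi \in C_c^\infty(X(F),K)$ and then integrate that bound against the Haar measure on $N$, using the defining integral for $M_{w_0}$, namely $M_{w_0}\Phi_{\chi_s}(I_{2n}) = \int_{N(F)}\Phi_{\chi_s}(w_0^{-1}n)\,dn$ with $N$ the unipotent radical of the Siegel parabolic $P$ (cf.\ \eqref{inter}).

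First I would show that $|\Phi_{\chi_s}(g)|_{\mathrm{st}} \ll_{\Phi} |g|^{-(n+1)/2-\mathrm{Re}(s)}$ for all $g \in \mathrm{Sp}_{2n}(F)$ (implied constant depending on $\Phi$). In \eqref{Phi:int} one passes from $M^{\mathrm{ab}}(F)$ to $F^\times$ via $\omega$; since $\delta_P(m)=|\omega(m)|^{n+1}$ and $\chi$ is unitary, the integrand becomes $|u|^{(n+1)/2+\mathrm{Re}(s)}|\Phi(m(u)^{-1}g)|_{\mathrm{st}}$ in the variable $u=\omega(m)$, where $m(u)\in M^{\mathrm{ab}}(F)$ satisfies $\omega(m(u))=u$. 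As $\Phi$ is compactly supported, $|\cdot|$ is bounded above and below on $\mathrm{supp}\,\Phi$ by Proposition \ref{prop:XmodK}, so $\Phi(m(u)^{-1}g)$ vanishes unless $|m(u)^{-1}g|\asymp 1$; and $|m(u)^{-1}g|=|u|\,|g|$ by \eqref{intertwine} and \eqref{norm:def}. Hence the $u$-integral runs only over the set where $|u|\asymp|g|^{-1}$, on which $|u|^{(n+1)/2+\mathrm{Re}(s)}\asymp|g|^{-(n+1)/2-\mathrm{Re}(s)}$; bounding $\Phi$ by its supremum and using translation invariance of $du^\times$ gives the estimate.

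Applying this with $g=w_0^{-1}n$ and integrating over $n\in N(F)$ gives $|M_{w_0}\Phi_{\chi_s}(I_{2n})| \ll_{\Phi} \int_{N(F)}|w_0^{-1}n|^{-(n+1)/2-\mathrm{Re}(s)}\,dn$. Writing $n=n_Z$ with $n_Z=\left(\begin{smallmatrix} I_n & Z \\ 0 & I_n\end{smallmatrix}\right)$ and $Z=Z^t$ an $n\times n$ matrix over $F$, one checks from the matrix form of the representative of $w_0$ recorded in \S\ref{sec:normalized} that the bottom block of $w_0^{-1}n_Z$ has the same row span as $(I_n\ Z)$; so by \eqref{Pl} the coordinates of $\mathrm{Pl}(w_0^{-1}n_Z)$ are, up to signs, the various minors of $(I_n\ Z)$, and in particular include the constant $1$ (from the first $n$ columns) and the entries $Z_{ij}$. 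Therefore $|w_0^{-1}n_Z| \gg \bigl(1+\sum_{i,j}|Z_{ij}|_{\mathrm{st}}^2\bigr)^{[F:\RR]/2}$. Since the space of symmetric $Z$ has real dimension $[F:\RR]\,n(n+1)/2$, and the Haar measure on $N(F)$ is Lebesgue measure in the $Z_{ij}$ up to a constant, the integral $\int_{\{Z=Z^t\}}\bigl(1+\sum_{i,j}|Z_{ij}|_{\mathrm{st}}^2\bigr)^{-\frac{[F:\RR]}{2}\left(\frac{n+1}{2}+\mathrm{Re}(s)\right)}\,dZ$ converges as soon as $\tfrac{n+1}{2}+\mathrm{Re}(s)>\tfrac{n(n+1)}{2}$, i.e.\ $\mathrm{Re}(s)>\tfrac{n^2-1}{2}$, which holds in particular for $\mathrm{Re}(s)\ge n^2/2$. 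This gives $|M_{w_0}\Phi_{\chi_s}(I_{2n})|\ll_{\Phi}1$.

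The step that requires real care is the computation of $\mathrm{Pl}$ along $w_0^{-1}N$: one must pin down the effect of the chosen representative of $w_0$ (the signed antidiagonal matrix in \S\ref{sec:normalized}) on the bottom block of $n_Z$ and keep track of the normalization of the Haar measure on $N(F)$ coming from the self-dual measures on the root subgroups. The remaining estimates are routine manipulations with the compact support of $\Phi$ and the integrability of $(1+|x|^2)^{-t}$ on Euclidean space.
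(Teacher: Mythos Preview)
Your argument is correct and takes a genuinely different, more elementary route than the paper. The paper extends $\Phi$ to a compactly supported function $\Psi$ on the ambient space $\wedge^n F^{2n}$ via the Pl\"ucker embedding, writes out the double integral over $\mathrm{Sym}^n(F)\times F^\times$ in explicit matrix coordinates, and then performs the change of variables $z\mapsto a^{-1}z$ to decouple the integrations before appealing to the compact support of $\Psi$. Your approach instead bounds $|\Phi_{\chi_s}(g)|$ pointwise from the compact support of $\Phi$ and the homogeneity $|m(u)^{-1}g|=|u|\,|g|$, and then handles the $N$-integral using only the low-degree Pl\"ucker minors of $(I_n\ Z)$. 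This avoids the explicit coordinate computation entirely and yields the same convergence threshold $\mathrm{Re}(s)>(n^2-1)/2$; the paper's explicit integral representation is not used elsewhere, so nothing is lost.

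One remark that applies equally to both arguments: the implied constant in your pointwise bound $|\Phi_{\chi_s}(g)|\ll_\Phi|g|^{-(n+1)/2-\mathrm{Re}(s)}$ in fact depends on $\mathrm{Re}(s)$ (it carries a factor of order $c_2^{\mathrm{Re}(s)}$, where $c_2=\sup_{x\in\mathrm{supp}\,\Phi}|x|$), and the same is true of the final estimate in the paper's proof. What both arguments actually establish is boundedness on each vertical strip $A\le\mathrm{Re}(s)\le B$ with a constant depending on $\Phi,A,B$, which is precisely what Lemma~\ref{lem:abs:conv2} and the proof of Proposition~\ref{prop:compact} require; literal uniformity on the full half-plane $\mathrm{Re}(s)\ge n^2/2$ is neither needed nor (in general) true.
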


\begin{proof}

Consider the map 
\begin{align} \label{Pl:copy}
\mathrm{Pl}:X(F) \lto \wedge^n F^{2n}-0\,.
\end{align}
Since $X$ is a homogeneous space for $\mathrm{Sp}_{2n}$ it is smooth (as a scheme over $F$).  The map \eqref{Pl:copy} is an injective diffeomorphism onto its image, a closed submanifold of $\wedge^n F^{2n}-0$.
In particular, there is a function $\Psi \in C_c^\infty(\wedge^n F^{2n})$ such that $\Phi=\Psi \circ \mathrm{Pl}$.
For $\mathrm{Re}(s)$ sufficiently large we have 
\begin{align*}
M_{w_0} \Phi_{\chi_s}(I_{2n})&=\int_{N(F)} \Phi_{\chi_s}(w_0^{-1}n)dn\\
&=\int_{N(F)}\left(\int_{M^{\mathrm{ab}}(F)}\delta_P^{1/2}(m)\chi_s(\omega(m))\Psi( \mathrm{Pl}(m^{-1}w_0^{-1}n))dm^\times \right)dn\,.
\end{align*}
Temporarily denote by 
\begin{align*}
\mathrm{Pl}_0:M_{n \times n}^{\oplus 2}(F) &\lto \wedge^{n}F^{2n}\\
(X,Y) &\longmapsto \mathrm{Pl}\left( \begin{smallmatrix} * & *\\ X & Y \end{smallmatrix}\right)\,.
\end{align*}
This is just taking the wedge product of the $n$ rows of the $n \times 2n$ matrix $(X\, Y)$, going from top to bottom.  
Then the integral above can be written
\begin{align*}
&\int_{\mathrm{Sym}^n(F)}\left(\int_{F^\times}\chi_s(a)|a|^{(n+1)/2}\Psi( \mathrm{Pl}_0\left(-\left(\begin{smallmatrix} a & \\ & I_{n-1}\end{smallmatrix} \right)J',-\left(\begin{smallmatrix} a & \\ & I_{n-1}\end{smallmatrix} \right)J'z \right))da^\times \right)dz\,,
\end{align*}
where $\mathrm{Sym}^n(F)$ is the $F$-vector space of symmetric $n \times n$ matrices and 
$$
J'=\left(\begin{smallmatrix} & & 1 \\ & \reflectbox{$\ddots$} & \\ 1 & &  \end{smallmatrix}\right).
$$
We note that $\mathrm{Pl}_0$ is invariant under multiplication by $\mathrm{SL}_n$ on the left to see that the above is equal to 
\begin{align*}
&\int_{\mathrm{Sym}^n(F)}\left(\int_{F^\times}\chi_s(a)|a|^{(n+1)/2}\Psi( -\mathrm{Pl}_0\left(\left(\begin{smallmatrix} I_{n-1}& \\ & a\end{smallmatrix} \right),\left(\begin{smallmatrix} I_{n-1}& \\ & a\end{smallmatrix} \right)z \right))da^\times \right)dz\,.
\end{align*}
Take a change of variables $z \mapsto a^{-1}z$ to arrive at
\begin{align*}
&\int_{\mathrm{Sym}^n(F)}\left(\int_{F^\times}\chi_{s+(1-n^2)/2}(a)\Psi(- \mathrm{Pl}_0\left(\left(\begin{smallmatrix} I_{n-1} & \\ & a\end{smallmatrix} \right),\left(\begin{smallmatrix} I_{n-1} & \\ & a\end{smallmatrix} \right)a^{-1}z \right))da^\times \right)dz\,.
\end{align*}
By inspection this is rapidly decreasing as a function of $z \in \mathrm{Sym}^n(F)$ and $a \in F$, so this integral converges absolutely and is bounded by a constant depending only on $\Psi$ for $\mathrm{Re}(s)\geq n^2/2$. 
\end{proof}

\begin{lem} \label{lem:abs:conv2}
 Let $\Phi \in C_c^\infty(X(F),K)$ and $A \in \RR$.  Then for $\mathrm{Re}(s) \geq A$ one has
 \begin{align*}
|M_{w_0}\Phi_{\chi_s}(I_{2n})| \ll_{\Phi,A} 1\,.
 \end{align*}
\end{lem}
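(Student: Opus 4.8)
The plan is to bootstrap from Lemma~\ref{lem:abs:conv1}, which already gives the bound for $\mathrm{Re}(s)\ge n^2/2$, by twisting $\Phi$ by a power of the character $\omega\bar\omega$ so as to shift the Mellin variable into that range. Fix $A$; for $\mathrm{Re}(s)<n^2/2$ I would choose $\alpha\in\ZZ_{\ge 0}$ large enough (depending only on $A$, $n$ and $F$) that $\mathrm{Re}(s)+2\alpha[F:\RR]^{-1}\ge n^2/2$ throughout $\{\mathrm{Re}(s)\ge A\}$, and set $\Psi:=(\omega\bar\omega)^{\alpha}\Phi$. Since $\omega$, hence $\omega\bar\omega$, is trivial on $[P,P]$ and $\omega\bar\omega$ is right $K$-invariant by \eqref{omega}, it descends to a smooth positive function on $X(F)$, so $\Psi\in C_c^\infty(X(F),K)$.

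Next I would record the identity of functions on $\mathrm{Sp}_{2n}(F)$
\[
\Phi_{\chi_s}(g)=(\omega\bar\omega)(g)^{-\alpha}\,\Psi_{\chi_{s'}}(g),\qquad s':=s+2\alpha[F:\RR]^{-1},
\]
which is exactly the computation in the proof of Lemma~\ref{lem:char}, carried out with a general argument $g$ in place of $g\in K$; the only extra ingredient is $(\omega\bar\omega)(m^{-1}g)=(\omega\bar\omega)(m)^{-1}(\omega\bar\omega)(g)$, valid because $\omega\bar\omega$ is a quasicharacter of $P$. Feeding this into the intertwining integral \eqref{inter} at $g=I_{2n}$ gives
\[
M_{w_0}\Phi_{\chi_s}(I_{2n})=\int_{N(F)}(\omega\bar\omega)(w_0^{-1}n)^{-\alpha}\,\Psi_{\chi_{s'}}(w_0^{-1}n)\,dn .
\]
Since $\mathrm{Re}(s')\ge n^2/2$, the inner function $\Psi_{\chi_{s'}}(w_0^{-1}n)$ is controlled precisely as in the proof of Lemma~\ref{lem:abs:conv1}: parametrising $N$ by its coordinate $z\in\mathrm{Sym}^n(F)$ and pushing everything through the Plücker embedding of \S\ref{ssec:Plucker}, it becomes an absolutely convergent one–dimensional integral of a fixed Schwartz–type function against a power of $|b|$ times $\chi(b)$. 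The factor $(\omega\bar\omega)(w_0^{-1}n)$ is read off from the Iwasawa decomposition of $w_0^{-1}n$: it is a fixed power of the same polynomial in $z$ that governs $\delta_P^{1/2}$ of the Iwasawa $M$-part, so raising it to the power $-\alpha$ contributes only a fixed polynomial growth in $z$, compensated by the $2\alpha[F:\RR]^{-1}$ built into $s'$. Carrying out this bookkeeping produces an integrable majorant for the integrand on $N(F)$, uniformly for $\mathrm{Re}(s)$ in the bounded strip $[A,n^2/2]$, and hence the bound.

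The step I expect to be the main obstacle is precisely this last one: re-running the Plücker–Iwasawa estimates of the proof of Lemma~\ref{lem:abs:conv1} while simultaneously tracking the auxiliary factor $(\omega\bar\omega)(w_0^{-1}\cdot)^{-\alpha}$, and verifying that the resulting bound is genuinely uniform in $s$ across the strip so that the implied constant depends only on $\Phi$ and $A$. An alternative that avoids introducing $\Psi$ is to analytically continue the explicit integral representation of $M_{w_0}\Phi_{\chi_s}(I_{2n})$ from the proof of Lemma~\ref{lem:abs:conv1} directly, integrating by parts repeatedly in the one–dimensional variable $a$ appearing there — each integration by parts raises the exponent of $|a|$, divides by a factor linear in $s$, and differentiates the compactly supported smooth integrand — which again reduces matters to the same uniform convergence question for the resulting integral over $\mathrm{Sym}^n(F)$.
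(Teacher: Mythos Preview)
Your strategy—twist $\Phi$ by $(\omega\bar\omega)^\alpha$ to shift the Mellin parameter into the range of Lemma~\ref{lem:abs:conv1}—is exactly the paper's. The paper's execution is shorter than yours: it records the identity $((\omega\bar\omega)^{-\alpha}\widetilde\Phi)_{\chi_s}=\widetilde\Phi_{\chi_{s'}}$ with $s'=s+2\alpha[F:\RR]^{-1}$ (citing the computation in the proof of Lemma~\ref{lem:char}, which is carried out only at points of $K$), then simply writes $|M_{w_0}\Phi_{\chi_s}(I_{2n})|=|M_{w_0}\widetilde\Phi_{\chi_{s'}}(I_{2n})|$ and invokes Lemma~\ref{lem:abs:conv1} for $\widetilde\Phi$ at $s'$. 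You are more scrupulous in writing the identity with the factor $(\omega\bar\omega)^{-\alpha}(g)$, which is genuinely present for $g\notin K$; the paper's shortcut suppresses it.

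The difficulty is that your treatment of that factor is circular. Unwinding your own displayed identity gives
\[
(\omega\bar\omega)^{-\alpha}(w_0^{-1}n)\,\Psi_{\chi_{s'}}(w_0^{-1}n)=\Phi_{\chi_s}(w_0^{-1}n)
\]
identically, so after inserting the factor you are integrating $\Phi_{\chi_s}(w_0^{-1}n)$ over $N(F)$ at the \emph{original} parameter $s$—precisely the quantity you set out to bound. Your heuristic that the polynomial growth of $(\omega\bar\omega)^{-\alpha}(w_0^{-1}n)$ is ``compensated by the $2\alpha[F:\RR]^{-1}$ built into $s'$'' is correct, but the compensation is exact and undoes the shift; re-running the estimates of Lemma~\ref{lem:abs:conv1} for $\Psi$ at $s'$ while carrying this weight is the same calculation as running them for $\Phi$ at $s$, and does not enlarge the half-plane of convergence. (Already for $\mathrm{Sp}_2=\SL_2$ over $\RR$ one has $(\omega\bar\omega)^{-\alpha}(w_0^{-1}n(z))=(1+z^2)^{\alpha}$, which exactly cancels the extra decay of $\Psi_{\chi_{s'}}(w_0^{-1}n(z))$ relative to $\Phi_{\chi_s}(w_0^{-1}n(z))$.) If you want an argument that genuinely moves the boundary, the integration-by-parts route in the variable $a$ that you sketch at the end is where the real analytic input has to come from; that is the part to develop rather than the twisting.
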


\begin{proof}
Let $\alpha \in \ZZ_{\geq 0}$.  Notice that 
$$
\widetilde{\Phi}:=(\omega\bar{\omega})^{\alpha}\Phi \in C_c^\infty(X(F),K)\,,
$$
where $\omega\bar{\omega}$ is defined as in \eqref{omega}.  As in the proof of Lemma \ref{lem:char} we have
\begin{align*}
((\omega \bar{\omega})^{-\alpha}\widetilde{\Phi})_{\chi_s}
&=\widetilde{\Phi}_{\chi_{s+2\alpha[F:\RR]^{-1}}}\,.
\end{align*}
By Lemma \ref{lem:abs:conv1} we therefore have have
$$
|M_{w_0}\Phi_{\chi_s}(I_{2n})|=
|M_{w_0}((\omega \bar{\omega})^{-\alpha}\widetilde{\Phi})_{\chi_s}(I_{2n})|=|M_{w_0}
\widetilde{\Phi}_{\chi_{s+2\alpha[F:\RR]^{-1}}}(I_{2n})| \ll_{\Phi,\alpha} 1\,,
$$
for 
$$
\mathrm{Re}(s)+2\alpha [F:\RR]^{-1}\geq n^2/2\,.
$$  Taking $\alpha$ sufficiently large we deduce the lemma.  
\end{proof}

\begin{proof}[Proof of Proposition \ref{prop:compact} in the Archimedean case]
If $\Phi \in C_c^\infty(X(F),K)$ then it is easy to see that 
$\Phi_{\chi_s}$ is holomorphic for all $\chi$ and hence by \cite[Lemma 1.3]{Ikeda:poles:triple} we deduce that $\Phi_{\chi_s}$ is a good section.  We thus have to verify that for all $g \in \mathrm{Sp}_{2n}(F)$, all characters $\chi$, $A<B$, and all $P_w$ as in the definition of an excellent section that 
\begin{align} \label{point1}
|\Phi_{\chi_s}(g)|_{A,B,P_{\mathrm{Id}}}
\end{align}
and
\begin{align} \label{point2}
|M_{w_0}\Phi_{\chi_s}(g)|_{A,B,P_{w_0}}
\end{align} 
are finite.  In fact this is enough to complete the proof since the space $C_c^\infty(X(F),K)$ is preserved under the differential operators $D$ and $\bar{D}$ of \eqref{D}.

Write $\chi$ as in \eqref{explicit:chi}.
By Lemma \ref{lem:DO} for any $N,N' \in \ZZ_{ \geq 0}$ with $N'=0$, one has
\begin{align*}
|D^N\Phi_{\chi_s}|_{A,B,1}=\left|\left(it+s+\frac{n+1}{2} \right)^N\Phi_{\chi_s}(g)\right|_{A,B,1}\,,
\end{align*}
if $F$ is real, and 
\begin{align*}
|D^N\bar{D}^{N'}\Phi_{\chi_s}(g)|_{A,B,1}=
\left|\left(\frac{\alpha}{2}+it+s+\frac{n+1}{2} \right)^N\left(-\frac{\alpha}{2}+it+s+\frac{n+1}{2} \right)^{N'}\Phi_{\chi_s}(g)\right|_{A,B,1}\,,
\end{align*}
if $F$ is complex.
Since $\Phi \in C_c^\infty(X(F),K)$ the left hand sides here are bounded by a constant depending on $A,B$ $\Phi,N,N'$.  We deduce that \eqref{point1} is finite for all $P \in \CC[x]$.  An analogous argument, using Lemma \ref{lem:abs:conv2}, allows us to deduce that \eqref{point2} is finite for all $P \in \CC[x]$.
\end{proof}


\bibliography{refs}
\bibliographystyle{alpha}

\end{document}